\newtheorem{example}{Example}
\newtheorem{thm}{Theorem}[section]
\newtheorem{lem}[thm]{Lemma}
\newtheorem{problem}[thm]{Problem}
\newtheorem{pblm}{Problem}
\newtheorem{pr}{Problem}
\newtheorem{alg}{Algorithm}
\theoremstyle{definition}
\theoremstyle{remark}
\newtheorem{rem}[thm]{Remark}
\newcommand*\diff{\mathop{}\!\mathrm{d}}
\DeclareMathOperator{\dt}{\mathrm{d}t}
\DeclareMathOperator{\pt}{\partial_t}
\begin{document}

\title{Space-time adaptive finite elements for nonlocal parabolic variational inequalities}

\author[hw,pb]{Heiko~Gimperlein\corref{cor1}\fnref{fn1}}
\ead{h.gimperlein@hw.ac.uk}
\author[hw]{Jakub~Stocek\fnref{fn2}}
\ead{js325@hw.ac.uk}

\cortext[cor1]{Corresponding author}
\fntext[fn1]{H.~G.~acknowledges support by ERC Advanced Grant HARG 268105.}
\fntext[fn2]{J.~S.~was supported by The Maxwell Institute Graduate School in Analysis and its
Applications, a Centre for Doctoral Training funded by the UK Engineering and Physical
Sciences Research Council (grant EP/L016508/01), the Scottish Funding Council, Heriot-Watt
University and the University of Edinburgh.}
\address[hw]{Maxwell Institute for Mathematical Sciences and Department of Mathematics, Heriot--Watt University, Edinburgh, EH14 4AS, United Kingdom}
\address[pb]{Institute for Mathematics, University of Paderborn, Warburger Str.~100, 33098 Paderborn, Germany}

\begin{abstract}
This article considers the error analysis of finite element discretizations and adaptive mesh refinement procedures for nonlocal dynamic contact and friction, both in the domain and on the boundary. For a large class of parabolic variational inequalities associated to the fractional Laplacian we obtain a priori and a posteriori error estimates and study the resulting space-time adaptive mesh-refinement procedures. Particular emphasis is placed on mixed formulations,  which include the contact forces as a Lagrange multiplier. Corresponding results are presented for elliptic problems. Our numerical experiments for $2$-dimensional model problems confirm the theoretical results: They indicate the efficiency of the a posteriori error estimates and illustrate the convergence properties of space-time adaptive, as well as uniform and graded discretizations.
\end{abstract}
\begin{keyword}
fractional Laplacian; variational inequality; space-time adaptivity; a posteriori error estimates; a priori error estimates; dynamic contact.
\end{keyword}
\maketitle

\vskip 1.0cm

\section{Introduction}\label{intro}
Variational inequalities for time-dependent nonlocal differential equations have attracted recent interest in a wide variety of applications. Classically, parabolic obstacle problems arise in the pricing of American options with jump processes \cite{ContTankov,Wilmott}; current advances include their regularity theory \cite{BarriosFigalliRO2018,Silvestre} and the a priori analysis of numerical approximations \cite{Salgado,GuanGunzburger}. Mechanical problems naturally involve contact and friction at the boundary with surrounding materials. For nonlocal material laws, they are intensely studied in peridynamics  \cite{Gunzburger2018vi,Madenci,Du3}, but even for local material laws boundary integral formulations give rise to nonlocal problems \cite{eck,HeikoContact,gwinsteph}. Friction also plays a role in nonlocal evolution equations in image processing \cite{Image2,Image3,Image1}, and obstacle problems arise in the study of nonlocal interaction energies in kinetic equations \cite{cdm}.

For local differential equations, the pure and numerical analysis of variational inequalities has a long history \cite{petrosyan2012regularity}, especially motivated by contact problems in mechanics \cite{hlava,oden,wrig}. Of particular current interest in numerical analysis have been dynamic contact problems for time-dependent equations, including adaptive mesh refinements \cite{HAGER2012,wohlmuth}, high-order \cite{BanzStephan2014} and Nitsche methods \cite{burman2018nitsche,Chouly2017}. Their analysis is crucial for applications from tire dynamics \cite{Banz2016,HAGER2012} to blood flow in aortic valves \cite{Gerbeau2009}.\\

This article considers the systematic error analysis of the four standard parabolic variational inequalities \cite{petrosyan2012regularity} associated to the fractional Laplacian as a nonlocal model operator: obstacle problems and friction, both in the domain and in the boundary. Particular emphasis is placed on their mixed formulation,  which computes the contact forces as a Lagrange multiplier. Numerical experiments present the efficient space-time adaptive mesh refinement procedures obtained from the a posteriori error estimates. We also obtain corresponding results for elliptic problems. \\

To be specific, let  \textcolor{black}{$\Omega  \subset \mathbb{R}^n$} be a bounded, \textcolor{black}{$n$-dimensional} Lipschitz domain with boundary $\partial \Omega$. The integral fractional Laplacian $(-\Delta)^s$ of order $2s \in (0,2)$ with Dirichlet boundary conditions is defined by the bilinear form
\begin{equation*}
a(u,v) = \frac{c_{n,s}}{2} \iint_{(\Omega \times \mathbb{R}^n)\cup(\mathbb{R}^n \times \Omega)} \frac{(u(x)-u(y))(v(x)-v(y))}{\vert x - y \vert^{n+2s}} \diff y \diff x.
\end{equation*}
on \textcolor{black}{the fractional Sobolev space} $H^{s}(\Omega)$, and $c_{n,s} = \frac{2^{2s} s \Gamma\left(\frac{n+2s}{2}\right)}{\pi^{\frac{n}{2}}\Gamma\left(1-s\right)}$.
For  $f \in (H^{s}(\Omega))^*$ the associated energy is given by
\begin{equation}
E(v) = \frac{1}{2} a(v,v) - \langle f,v \rangle\ ,
\end{equation}
where $\langle \cdot, \cdot\rangle$ denotes pairing between $H^{s}(\Omega)$ and $(H^{s}(\Omega))^*$. We study the elliptic and parabolic $1$-phase  and $2$-phase problems associated to the minimization of $E$.

Given $\chi \in H^s(\Omega)$ with $\chi \leq 0$ in \textcolor{black}{$\Omega^C = \mathbb{R}^n \setminus \overline{\Omega}$}, the time-independent obstacle ($1$--phase) problem for $(-\Delta)^s$ minimizes the energy over $K_o = \lbrace v \in H^s_0(\Omega) : v \geq \chi\; \mathrm{a.e.\; in}\; \Omega \rbrace$:
\begin{pr}\label{prob:obst}
Find $u \in K_o$ such that $E(u) \leq E(v)$ for all $v \in K_o$.
\end{pr}
For $s \in (\frac{1}{2},1)$, the Signorini (thin $1$--phase) problem corresponds to an obstacle $g \in H^{s-\frac{1}{2}}(\widetilde{\Gamma})$ on a codimension $1$ subset of $\Omega$. With $K_s(\Omega) = \lbrace v \in H^s_{\widetilde{\Gamma}^C}(\Omega) : v|_{\widetilde{\Gamma}} \geq g \;\mathrm{a.e. \;on}\; \widetilde{\Gamma} \rbrace$:
\begin{pr}\label{prob:contact}
Find $u \in K_s$ such that $E(u) \leq E(v)$ for all $v \in K_s$.
\end{pr}
For friction ($2$--phase) problems, the energy contains a Lipschitz continuous functional,
\begin{align}\label{e:j's}
j_{S}(v) &= \int_{S} \mathcal{F} |v| \mathrm{d}x \ ,\ \mathcal{F} \in L^\infty(\Omega)\ , S \subset \Omega \;\text{open}\ ,\\
j_{\widetilde{\Gamma}}(v) &= \int_{\widetilde{\Gamma}} \mathcal{F} |v| \mathrm{d}s \ ,\ \mathcal{F} \in L^\infty(\widetilde{\Gamma}). \nonumber
\end{align}
\begin{pr}\label{prob:int_friction}
Find $u \in H_0^s(\Omega)$ such that
\begin{equation*}
E(u) + j_S(u) \leq E(v) + j_S(v) ,\; \forall v \in H^s_0(\Omega).
\end{equation*}
\end{pr}
The frictional contact (thin $2$--phase) problem again requires $s \in (\frac{1}{2},1)$:
\begin{pr}\label{prob:friction}
Find $u \in H^s_{\widetilde{\Gamma}^C}(\Omega)$ such that
\begin{equation*}
E(u) + j_{\widetilde{\Gamma}}(u) \leq E(v) + j_{\widetilde{\Gamma}}(v) ,\; \forall v \in H^s_{\widetilde{\Gamma}^C}(\Omega).
\end{equation*}
\end{pr}
The parabolic variants are given by the corresponding gradient flows, see Section~\ref{sec:Parabolic}.\\

In this generality, this article discusses the finite element discretizations for the associated elliptic and parabolic variational inequalities. We discuss a time-dependent discontinuous Galerkin formulation for the variational inequalities and a mixed discontinuous Galerkin formulation. In space, continuous low-order elements are used. Key results of the article present a unified error analysis for the different problems: An a priori error estimate is obtained in Theorem~\ref{l:para_apriori} (mixed), respectively Theorem~\ref{thm:Para_Falk} (variational inequality), while for $f \in L^2(\Omega)$ an a posteriori error estimate is the content of Theorem~\ref{l:para_a}, respectively Theorem~\ref{thm:para_apost_vi}. Corresponding results for the time-independent problem are presented in Section~\ref{sec:elliptic}. \\

The a posteriori error estimates lead to fast space and space-time adaptive mesh refinement procedures. Numerical experiments in Section~\ref{sec:Numerics} provide a first detailed study of these procedures for both the time-independent and time-dependent problems. For obstacle and friction problems they confirm the reliability and efficiency of the estimates and compare to discretizations on graded and uniform meshes. In the model problems, the adaptive method converges with twice the convergence rate of the uniform method.\\

For time-independent problems, adaptive methods have long been studied as fast solvers for the nonlocal boundary element formulations of contact problems \cite{gwinsteph}.  Boundary element procedures  for time-dependent problems lead to integral equations in space-time \cite{HeikoContact}, unlike the evolution equations considered in this article. An a posteriori error analysis for nonlocal obstacle problems has been studied in \cite{Nochetto}, based on the associated variational inequality. The implementation and adaptive methods for fractional Laplace equations are considered in \cite{Acosta1,Glusa,d2013fractional}, without contact. Furthermore, spectral nonlocal operators have been of interest \cite{SalgadoOtarola2016}. For motivations from continuum mechanics see \cite{DuICM}.\\

\noindent \textit{The article is organized in the following way:} Section~\ref{sec:Definitions} recalls the definitions and notation related to the fractional Laplacian as well as the suitable Sobolev spaces. Section~\ref{s:VI} discusses the nonlocal variational inequalities and establishes the equivalence of the weak and strong formulations. Section~\ref{sec:discretization} describes the discretization. The a priori and a posteriori error analysis for the time-independent problems is presented in Section~\ref{sec:elliptic}. The error analysis for the time-dependent problems is the content of Section~\ref{sec:Parabolic}. After discussing implementational challenges in Section~\ref{sec:implementation},  in Section~\ref{sec:Numerics} we present numerical experiments on uniform, graded and space-time adaptive meshes based on the mixed formulation.

\section{Preliminaries}\label{sec:Definitions}

This section recalls some notation and some basic properties related to the fractional Laplacian $(-\Delta)^s$, $0<s<1$, in a bounded Lipschitz domain $\Omega \subset \mathbb{R}^n$.

The fractional Sobolev space $H^s(\Omega)$ is defined by \textcolor{black}{\cite{gwinsteph}}
\begin{equation*}
H^s(\Omega) = \lbrace v \in L^2(\Omega): |v|_{H^s(\Omega)} < \infty \rbrace,
\end{equation*}
where $|\cdot|_{H^s(\Omega)}$ is the Aronszajn-Slobodeckij seminorm
\begin{equation*}
|v|_{H^s(\Omega)}^2 = \iint_{\Omega \times \Omega} \frac{(v(x)-v(y))^2}{\vert x - y \vert^{n+2s}} \diff y \diff x.
\end{equation*}
$H^s(\Omega)$ is a Hilbert space endowed with the norm
\begin{equation*}
\|v\|_{H^s(\Omega)} = \|v\|_{L^2(\Omega)} + |v|_{H^s(\Omega)}.
\end{equation*}
The closure in $H^s(\Omega)$ of the subspace of functions $v \in C^\infty(\overline{\Omega})$ with $v = 0$ on $\partial \Omega \setminus \widetilde{\Gamma}$ is denoted by $H^s_{\widetilde{\Gamma}^C}(\Omega)$. As common, we write $H^s_0(\Omega) = H^s_{\partial \Omega}(\Omega)$.
The dual space of \textcolor{black}{$H^s_{\widetilde{\Gamma}}(\Omega)$} is denoted by $H^{-s}_{\widetilde{\Gamma}^C}(\Omega)$, where $\widetilde{\Gamma}^C = \Gamma \setminus \overline{\widetilde{\Gamma}}$, and we denote the duality pairing by $\langle \cdot,\cdot\rangle$.\\

On $H^s(\Omega)$ we define a bilinear form as
\begin{equation*}
a(u,v) = \frac{c_{n,s}}{2} \iint_{(\Omega \times\mathbb{R}^n)\cup(\mathbb{R}^n \times \Omega)} \frac{(u(x)-u(y))(v(x)-v(y))}{\vert x - y \vert^{n+2s}} \diff y \diff x\ ,
\end{equation*}
where $c_{n,s} = \frac{2^{2s} s \Gamma\left(\frac{n+2s}{2}\right)}{\pi^{\frac{n}{2}}\Gamma\left(1-s\right)}$. It is continuous and coercive: There exist $C,\alpha > 0$ such that
\begin{align}
a(u,v) \leq C \|u\|_{H^s}\|v\|_{H^{s}}\ ,\qquad a(u,u) \geq \alpha \|u\|_{H^s}^2\ ,
\end{align}
for all $u, v \in H^s(\Omega)$. Note that the fractional Laplacian $(-\Delta)^s$ with homogeneous Dirichlet boundary conditions is the operator associated to the bilinear form $a(u,v)$. For more general boundary conditions see \cite{dipierro2017nonlocal}. By the Lax-Milgram lemma, $(-\Delta)^s: H^s_0(\Omega) \to H^{-s}(\Omega)$ is an isomorphism.

For sufficiently smooth functions $u: \mathbb{R}^n \to \mathbb{R}$, $(-\Delta)^s u(x)$ is given by
\begin{equation}\label{e:fl}
(-\Delta)^s u (x) = c_{n,s}\, P.V. \int_{\mathbb{R}^n} \frac{u(x)-u(y)}{\vert x - y \vert^{n+2s}} \diff y,
\end{equation}
where $P.V.$ denotes the Cauchy principle value. $(-\Delta)^s$ may also be defined in terms of the Fourier transform $\mathcal{F}$: $\mathcal{F}((-\Delta)^s u)\textcolor{black}{(\xi)} = \vert\xi \vert^{2s} \mathcal{F}u(\xi)$. This formulation shows that the ordinary Laplacian is recovered for $s = 1$, \textcolor{black}{which may be less obvious from the bilinear form $a$ or the integral expression \eqref{e:fl}.} \\

Let $H$ be a Hilbert space corresponding to $H^s_0(\Omega)$ or $H^s_{\widetilde{\Gamma}}(\Omega)$ for respective problems and let $H^{*}$ be the dual space of $H$.\\

For the analysis of time-dependent problems, the Bochner spaces $\mathbb{W}(0,T)$ prove useful:
\begin{equation*}
\mathbb{W}(0,T) = \lbrace v \in L^2(0,T;H) : \partial_t v \in L^2(0,T;H^{*})\rbrace\ .
\end{equation*}
It is a Hilbert space with norm
\begin{equation*}
\|v\|^2_{\mathbb{W}(0,T)} = \int_0^T \left\{ \|\partial_t v\|^2_{H^{*}} + \|v\|^2_{H}\right\} \dt + \|v(T)\|^2_{L^2(\Omega)}
\end{equation*}
and continuously embeds into $C^0(0,T;L^2(\Omega))$.\\

Furthermore, let $\langle\cdot,\cdot\rangle$ denote the pairing between $H$ and $H^*$, or $H^{\frac{1}{2}-s}$ and $H^{s-\frac{1}{2}}$ where appropriate.
\section{Elliptic and parabolic variational inequalities}\label{s:VI}
In this section we introduce a large class of elliptic and parabolic variational inequalities associated with the fractional Laplacian.
\subsection{Variational inequality formulation}
Let $a(\cdot,\cdot)$ be the bilinear form associated with the fractional Laplacian, and $K \subseteq H$, be a closed convex subset. We then consider the following problems:
\begin{problem}\label{p:ell1}
Find $u \in K$ with $f \in H^{*}$ such that
\begin{equation*}
a(u,v-u) \geq \langle f,v-u\rangle, \mathrm{for\;all}\; v \in K.
\end{equation*}
\end{problem} 
Given $j: H \to \mathbb{R}$ convex and lower semi-continuous, we further consider:
\begin{problem}\label{p:ell2}
Find $u \in H$, with $f \in H^{*}$ such that
\begin{equation*}
a(u,v-u) +j(v) - j(u) \geq \langle f,v-u\rangle, \mathrm{for\;all}\; v \in H.
\end{equation*}
\end{problem}
The friction functionals $j_S$ and $j_{\widetilde{\Gamma}}$ from \eqref{e:j's} are of particular interest.
Existence and uniqueness to both Problems \ref{p:ell1} and \ref{p:ell2} can be found in \cite[Theorem~2.1]{KinderlehrerStampacchia}, \cite[Theorems~2.1, 2.2]{glowinski}.\\
\\
Also for time-dependent variational inequalities, $K$ denotes a nonempty closed convex subset of $H$. We define $\mathbb{W}_K(0,T) = \lbrace v \in \mathbb{W}(0,T): v(t) \in K, \mathrm{a.e.\; in}\; t \in (0,T)\rbrace$. For a given initial condition $u_0 \in K$ we obtain the  problem:
\begin{problem}\label{p:para1}
Find $u \in \mathbb{W}_K(0,T)$ with $f \in L^2(0,T;H^{*})$ and $u(0) = u_0$ such that 
\begin{equation*}
\int_0^T \langle \partial_t u, v-u\rangle +a(u,v-u) \dt \geq \int_0^T \langle f,v-u\rangle \dt \;\mathrm{for\;all\;} v \in W_K(0,T).
\end{equation*}
\end{problem}
Furthermore, let $j(\cdot)$ be convex, lower semi-continuous and integrable for all $v \in \mathbb{W}_K(0,T)$. 
\begin{problem}\label{p:para2}
Find $u \in \mathbb{W}(0,T)$ with $f \in L^2(0,T;H^{*})$, $u(0) = u_0$, and $j(u_0) < \infty$ such that 
\begin{equation*}
\int_0^T \langle \partial_t u, v-u\rangle +a(u,v-u) \dt +\int_0^T j(v) - j(u) \dt \geq \int_0^T \langle f,v-u\rangle \dt \;\mathrm{for\;all\;} v \in \mathbb{W}(0,T).
\end{equation*}
\end{problem}
Existence and uniqueness of solutions to Problems \ref{p:para1} and \ref{p:para2} follows from \cite[Ch.1~Section~(5.2)]{DuvautLions} and \cite[Ch.2~Section~(2.1)]{Brezis1}. In later sections we provide a unified treatment of the above problems, both in the time-independent and time-dependent case.\\

\subsection{Strong formulations}

For the a posteriori error estimates derived later in this work, the strong formulation of Problems~\ref{prob:obst}--\ref{prob:friction} proves relevant. Problems~\ref{prob:obst} and \ref{prob:contact} correspond to the weak formulation in Problem~\ref{p:ell1}, and we refer to  \cite{Silvestre} for a discussion of their strong formulations:

\begin{pblm}[strong form of Problem \ref{prob:obst}]\label{pr:obst}
Find $u$ such that
\begin{equation}
\begin{aligned}
(-\Delta)^s u &\geq f  &\mathrm{in} \; \Omega\\
u &= 0 &\mathrm{in}\; \Omega^C \\
u \geq \chi ,\;
(u-\chi)((-\Delta)^s u - f) &= 0 &\mathrm{in} \; \Omega.
\end{aligned}
\end{equation}
\end{pblm}

\begin{pblm}[strong form of Problem \ref{prob:contact}]\label{pr:contact}
Find $u$ such that
\begin{equation}
\begin{aligned}
(-\Delta)^s u &\geq f  &\mathrm{in} \; \Omega\\
u &= 0   &\mathrm{in}\; \Omega^C \\
u \geq g,\; (u - g)\sigma(u) &= 0 &\mathrm{on}\; \widetilde{\Gamma}.
\end{aligned}
\end{equation}
Here $(-\Delta)^s u -f = \sigma(u) \delta_{\widetilde{\Gamma}}$ defines a unique function $\sigma(u)$ on $\widetilde{\Gamma}$.
\end{pblm}

We now consider the strong formulation of the friction problems~\ref{prob:int_friction} and \ref{prob:friction}, corresponding to the weak formulation in Problem~\ref{p:ell2}.  They read:
\begin{pblm}[strong form of Problem \ref{prob:int_friction}]\label{pr:int_friction}
Find $u$ such that
\begin{equation}
\begin{aligned}
(-\Delta)^s u &\leq f &\mathrm{in} \; \Omega\\
u &= 0 &\mathrm{in}\; \Omega^C\\
|(-\Delta)^s u - f| \leq \mathcal{F}, \;  u ((-\Delta)^s u-f) + \mathcal{F}|u| &=0 &\textcolor{black}{\mathrm{in}\; S \subseteq \Omega.}
\end{aligned}
\end{equation}
\end{pblm}

\begin{pblm}[strong form of Problem \ref{prob:friction}]\label{pr:friction} 
Find $u$ such that
\begin{equation}
\begin{aligned}
(-\Delta)^s u &\leq f &\mathrm{in} \; \Omega\\
u &= 0 &\mathrm{in}\; \Omega^C \\
|\sigma(u)| \leq \mathcal{F}, \;  u \sigma(u) + \mathcal{F}|u| &=0 &\mathrm{on}\; \widetilde{\Gamma}.
\end{aligned}
\end{equation}
Here $(-\Delta)^s u -f = \sigma(u) \delta_{\widetilde{\Gamma}}$ defines a unique function $\sigma(u)$ on $\widetilde{\Gamma}$.
\end{pblm}

\begin{lem}
a) If $u$ is a sufficiently smooth solution to Problem \ref{prob:int_friction}, then $u$ satisfies Problem \ref{pr:int_friction}. Conversely, a solution to Problem \ref{pr:int_friction} is also a solution to Problem \ref{prob:int_friction}. \\
b) If $u$ is a sufficiently smooth solution to Problem \ref{prob:friction}, then $u$ satisfies Problem \ref{pr:friction}. Conversely, a solution to Problem \ref{pr:friction} is also a solution to Problem \ref{prob:friction}. 
\end{lem}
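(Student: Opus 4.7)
Both parts (a) and (b) follow the same two-step template: first derive the strong form from the variational inequality by a sequence of judicious test-function choices, then verify the variational inequality from the strong form by direct substitution. The principal tool is the Green's identity for the fractional Laplacian, which converts $a(u,\varphi)$ into $\int_\Omega (-\Delta)^s u\,\varphi \diff x$ when $u$ is smooth and $\varphi \in H^s_0(\Omega)$; in (b) the same identity, combined with the definition $(-\Delta)^s u - f = \sigma(u)\,\delta_{\widetilde{\Gamma}}$, produces an additional boundary-type duality pairing $\langle \sigma(u), \varphi\rangle_{\widetilde{\Gamma}}$.

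\textbf{Weak to strong, case (a).} Starting from Problem \ref{prob:int_friction}, Green's identity rewrites the VI as
\[
\int_\Omega ((-\Delta)^s u - f)(v-u) \diff x + j_S(v) - j_S(u) \geq 0 \qquad \forall v \in H^s_0(\Omega).
\]
First, insert $v = u \pm t\varphi$ for $\varphi \in C_c^\infty(\Omega \setminus \overline{S})$ and $t \in \mathbb{R}$; since $j_S$ is unchanged this forces $(-\Delta)^s u = f$ outside $S$, which is consistent with the bulk condition in Problem \ref{pr:int_friction}. Next, for $\varphi \in C_c^\infty(S)$, use $|u + t\varphi| \leq |u| + |t||\varphi|$ to obtain
\[
t \int_S ((-\Delta)^s u - f)\varphi \diff x + |t|\int_S \mathcal{F}|\varphi|\diff x \geq 0;
\]
dividing by $|t|$ and letting $t \to 0^\pm$ yields $|\int_S ((-\Delta)^s u - f)\varphi\diff x| \leq \int_S \mathcal{F}|\varphi|\diff x$, hence the pointwise bound $|(-\Delta)^s u - f| \leq \mathcal{F}$ in $S$ by a density/duality argument. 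Finally, the choice $v = (1 \pm t)u$ with small $t$ gives, upon expanding $|v| = (1 \pm t)|u|$,
\[
\int_S \bigl(u((-\Delta)^s u - f) + \mathcal{F}|u|\bigr)\diff x = 0;
\]
the integrand is pointwise non-negative by the bound just proved, so it vanishes a.e., giving the complementarity condition.

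\textbf{Strong to weak, case (a), and case (b).} For the converse, substitute the strong-form identities into $a(u,v-u) + j_S(v) - j_S(u)$: after Green's identity and splitting the integral over $S$ and $\Omega\setminus S$, the off-friction piece contributes $\int_{\Omega\setminus S} f(v-u)\diff x$, and using the complementarity $u((-\Delta)^s u - f) = -\mathcal{F}|u|$ reduces the remaining task to showing
\[
\int_S ((-\Delta)^s u - f)\,v \diff x + \int_S \mathcal{F}|v|\diff x \geq 0,
\]
which is immediate from the pointwise bound $|(-\Delta)^s u - f| \leq \mathcal{F}$. Part (b) is structurally the same, with $S$ replaced by $\widetilde{\Gamma}$, the bulk residual $(-\Delta)^s u - f$ replaced by $\sigma(u)$, and the Lebesgue integrals over $S$ replaced by the duality pairing between $H^{s-\frac{1}{2}}(\widetilde{\Gamma})$ and its dual; the localisation to test functions supported away from $\widetilde{\Gamma}$ now simply gives $(-\Delta)^s u = f$ in $\Omega \setminus \widetilde{\Gamma}$, which is precisely the statement that $(-\Delta)^s u - f$ is supported on $\widetilde{\Gamma}$ and hence can be identified with $\sigma(u)\delta_{\widetilde{\Gamma}}$.

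\textbf{Main obstacle.} The principal subtlety is case (b): one must justify Green's identity in a form that isolates the trace contribution on the codimension-one set $\widetilde{\Gamma}$ and makes sense of $\sigma(u)$ in the appropriate fractional Sobolev space on $\widetilde{\Gamma}$. This requires $s \in (\tfrac12,1)$ to have a well-defined trace, and one tests only with $v$ whose trace on $\widetilde{\Gamma}$ may be arbitrary in the relevant space of traces. The passage from the integrated complementarity to its pointwise version uses the elementary fact that $u\,((-\Delta)^s u - f) + \mathcal{F}|u| \geq 0$ a.e., which is where the pointwise bound derived in the first step is essential; without it the integrated identity would not be enough.
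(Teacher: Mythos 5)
Your proof is correct and follows essentially the same route as the paper's: the strong form is extracted by testing with $v = u \pm t\varphi$ (giving $|(-\Delta)^s u - f| \leq \mathcal{F}$ via the triangle inequality for $j_S$) and with $v = (1\pm t)u$ (giving the integrated complementarity, whose integrand is non-negative by the pointwise bound), and the converse is a direct substitution; part (b) is treated by the same reduction to the trace variable $\sigma(u)$ supported on $\widetilde{\Gamma}$. Your converse is in fact slightly cleaner than the paper's, since using the complementarity to reduce to $\int_S((-\Delta)^s u - f)v + \mathcal{F}|v|\,\mathrm{d}x \geq 0$ avoids the paper's case distinction between $|(-\Delta)^s u - f| < \mathcal{F}$ and $= \mathcal{F}$, but this is a cosmetic difference rather than a different method.
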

\begin{proof}
a) We use the formulation of Problem \ref{prob:int_friction} as a variational inequality, i.e.~Problem~\ref{p:ell2}.
Substituting $\xi = v-u$ into Problem~\ref{p:ell2} for any $\xi \in H^s_0(\Omega)$, we find
\begin{equation*}
a(u,\xi) +j(u+\xi) - j(u) \geq \langle f,\xi\rangle\ .
\end{equation*}
Let $\xi \in C_c^{\infty}(\Omega)$ such that $\xi$ vanishes on the set $C=\{x \in S : u(x) = 0\}$. Replacing $\xi$ with $-\xi$ we obtain
\begin{equation*}
a(u,\xi) -j(u-\xi) + j(u) \leq \langle f, \xi \rangle.
\end{equation*}
For $|\xi| < |u|$ from the definition of $j$ we see that $$j(u) = \frac{1}{2}\left(j(u+\xi)+j(u-\xi)\right) \ .$$  
It follows that
\begin{equation*}
a(u,\xi) = \langle f, \xi \rangle ,
\end{equation*}
or $(-\Delta)^s u = f$ in $\Omega \setminus \overline{C}$. Furthermore, for arbitrary $\xi$ we have from the definition of $j$  and the triangle inequality:
\begin{equation*}
a(u,\xi) +j(\xi) - \langle f, \xi \rangle \geq 0.
\end{equation*}
Since $\mathcal{F}\geq 0$, using this inequality with $\xi$ and $-\xi$ we conclude
\begin{equation*}
|a(u,\xi) - \langle f, \xi\rangle| \leq j(\xi) = \int_S\mathcal{F} |\xi|.
\end{equation*}
As this holds for all $\xi$, the first asserted inequality in $S$ follows. Finally, to deduce the complementarity condition in $S$, we use $\xi = \pm u$ to obtain
\begin{equation*}
a(u,u) + j(u) -\langle f, u \rangle = 0.
\end{equation*}
Therefore
\begin{equation*}
\int_{C} u ((-\Delta)^s u -f) + \mathcal{F} |u|\; \mathrm{d}x = 0.
\end{equation*}
As the integrand is non-negative, it vanishes almost everywhere. This concludes the first part fo the assertion.\\

We now show that a solution of the strong Problem~\ref{pr:int_friction} satisfies the weak formulation. To do so, multiply the first equation of Problem~\ref{pr:int_friction} with a test function $v \in H^s_0(\Omega)$ and integrate over $\Omega$:
\begin{equation}
\int_{\Omega} v (-\Delta)^s u\; \mathrm{d} x \geq \int_{\Omega} f v \;\mathrm{d}x.
\end{equation}
Using
\begin{equation}
a(u,v) - \langle f,v \rangle  = \int_{C} ((-\Delta)^s u - f) v\; \mathrm{d} x,
\end{equation}
we see that
\begin{equation}
a(u,v-u) - \langle f, v-u \rangle +j(v) - j(u) = \int_C ((-\Delta)^s u - f) (v-u) + \mathcal{F}(|v|-|u|)\; \mathrm{d}x = I.
\end{equation}
It remains to show that the right hand side is nonnegative. If $|(-\Delta)^s u -f| < \mathcal{F}$ in a point $x$, then by the contact condition $u(x) = 0$. Thus,
\begin{equation*}
\begin{split}
I = \int_C ((-\Delta)^s u - f) (v-u) + \mathcal{F}(|v|-|u|)\; \mathrm{d}x &= \int_C ((-\Delta)^s u - f) (v) + \mathcal{F}|v|\; \mathrm{d}x \\ &\geq \int_C -|(-\Delta)^s u - f||v| + \mathcal{F}|v| \;\mathrm{d}x \\ &\geq 0.
\end{split}
\end{equation*}
If $|(-\Delta)^s u -f| = \mathcal{F}$ in $x$, then there exists $\mu \geq 0$ such that $u = - \mu ((-\Delta)^s u - f)$. Therefore,
\begin{equation*}
\begin{split}
I &= \int_C ((-\Delta)^s u - f)v + \mathcal{F}|v| + \mu |(-\Delta)^s u - f|^2  - \mu \mathcal{F} |(-\Delta)^s u - f|\mathrm{d}x \\ &\geq \int_C -|(-\Delta)^s u - f||v| + \mathcal{F}|v| \mathrm{d}x \\ &\geq 0.
\end{split}
\end{equation*}
We conclude that $u$ satisfies Problem \ref{p:ell2}. \\
\noindent b) The corresponding proof for the frictional contact Problem \ref{pr:friction} is analogous to part a), with the additional key observation:
\begin{lem}
Assume $\Lambda : H^s(\Omega) \to \mathbb{C}$ is continuous and $\Lambda(\varphi) = \Lambda(\psi)$ whenever $\varphi|_{\widetilde{\Gamma}} = \psi|_{\widetilde{\Gamma}}$. Then there exists a unique, continuous $\lambda : H^{\frac{1}{2}-s}_0(\widetilde{\Gamma}) \to \mathbb{C}$ such that 
\begin{equation*}
\lambda(\varphi) = \Lambda(E \varphi),
\end{equation*}
where $E: H^{s-\frac{1}{2}}_0(\widetilde{\Gamma}) \to H^s(\Omega)$ is any extension operator.
\end{lem}
\textcolor{black}{\begin{proof}
As a composition of continuous maps, $\lambda(\varphi) = \Lambda(E \varphi)$ defines a continuous functional $\lambda$ on $H^{\frac{1}{2}-s}_0(\widetilde{\Gamma})$.\\
Concerning uniqueness, if $E_1, E_2: H^{s-\frac{1}{2}}_0(\widetilde{\Gamma}) \to H^s(\Omega)$ are two extension operators, $(E_1 \varphi)|_{\widetilde{\Gamma}} = (E_2 \varphi)|_{\widetilde{\Gamma}} = \varphi$. By assumption on $\Lambda$, we conclude $\Lambda(E_1 \varphi) = \Lambda(E_2 \varphi)$, so that $\Lambda \circ E_1 = \Lambda \circ E_2$ defines a unique $\lambda : H^{\frac{1}{2}-s}_0(\widetilde{\Gamma}) \to \mathbb{C}$. 
\end{proof}}
From the Lemma one obtains that the distribution $(-\Delta)^s u -f$ is supported on $\widetilde{\Gamma}$. In order to belong to $H^{\frac{1}{2}-s}_0(\widetilde{\Gamma})$, it must be proportional to $\delta_{\widetilde{\Gamma}}$.
\end{proof}

\section{Discretization}\label{sec:discretization}
For simplicity of notation, we assume that $\Omega$ has a polygonal boundary. Let $\mathcal{T}_h$ be a family of shape-regular triangulations of $\Omega$ and $\mathbb{V}_h\subset H^s_0(\Omega)$, $s \in (0,1)$, the associated space of continuous piecewise linear functions on $\mathcal{T}_h$, vanishing at the boundary. Furthermore, let $\mathbb{V}_h^{\widetilde{\Gamma}^C} \subset H^s(\Omega)$ be the space of continuous piecewise linear functions on $\mathcal{T}_h$, vanishing on $\widetilde{\Gamma}^C$. Let $\mathbb{M}_H$ be the space of piecewise constant functions on $\mathcal{T}_H$. We denote the set of nodes of $\mathcal{T}_h$ by $\mathcal{P}_h$ (including the boundary nodes) and the nodal basis of $\mathbb{V}_h$ (resp. $\mathbb{V}_h^{\widetilde{\Gamma}^C}$) by $\lbrace \phi_i\rbrace$. Let $S_i$ be the support of the piecewise linear hat function associated to node $i$.\\

Let $K_h$ be the discrete counterpart of $K$. That is, for Problems~\ref{prob:obst} and \ref{prob:contact},
\begin{equation}
K_{oh} := (K_{o}(\Omega))_h = \lbrace v_h \in \mathbb{V}_h : v_h \geq \chi_h\; a.e.\; in\; \Omega \rbrace,
\end{equation}
and
\begin{equation}
K_{sh} :=(K_{s}(\Omega))_h = \lbrace v_h \in \mathbb{V}_h^{\widetilde{\Gamma}^C} : v_h|_{\widetilde{\Gamma}} \geq g_h \;a.e. \; on\; \widetilde{\Gamma} \rbrace.
\end{equation}

The set $K_h$ is nonempty, closed and convex.  To simplify the presentation, we assume a conforming discretization $K_h \subset K$. In the case of the obstacle problem this holds if $\chi \in \mathbb{V}_h$, while for the thin obstacle problem this holds if $g_h$ is the restriction to $\widetilde{\Gamma}$ of a function in $\mathbb{V}_h$. The appropriate spaces of restricted function on $\widetilde{\Gamma}$ are denoted by ${\widetilde{\mathbb{V}}_h}$ or ${\widetilde{\mathbb{M}}_H}$. See \cite{Veeser} for the adaptations necessary for nonconforming discretizations.\\
For the time discretization we consider a decomposition of the time interval $I = [0,T]$ into subintervals $I_k = [t_{k-1}, t_{k})$ with time step $\tau_k$. The associated space of piecewise polynomial functions  of degree $q = 0,1$  is denoted by $\mathbb{T}_{\tau}$. We  define $\mathbb{W}_{h\tau}(0,T) = \mathbb{V}_h \otimes \mathbb{T}_{\tau}$ and $\mathbb{M}_{H\tau}(0,T) = \mathbb{M}_H \otimes \mathbb{T}_{\tau}$. For the adaptive computations, also local time steps are considered. We denote these discrete local-in-time spaces by $\widetilde{\mathbb{W}}_{h\tau}(0,T)$, respectively $\widetilde{\mathbb{M}}_{H\tau}(0,T)$. Similarly to the time-independent case, let $\widetilde{\mathbb{W}}_{h\tau}^{\widetilde{\Gamma}}(0,T)$ and $\widetilde{\mathbb{M}}_{H\tau}^{\widetilde{\Gamma}}(0,T)$ be the  spaces of discrete functions vanishing on $\widetilde{\Gamma}^C$.\\

The discrete elliptic problems associated to the two classes of variational inequalities~\ref{p:ell1} and \ref{p:ell2} are:
\begin{problem}\label{p:disc_ell1}
Find $u_h \in K_h$ such that for all $v_h \in K_h$,
\begin{equation*}\label{p:disc_ell}
a(u_h, v_h - u_h) \geq \langle f, v_h - u_h\rangle.
\end{equation*}
\end{problem}
\begin{problem}\label{p:disc_ell2}
Find $u_h \in \mathbb{V}_h$ such that for all $v_h \in \mathbb{V}_h$,
\begin{equation*}\label{p:disc_ell_friction}
a(u_h, v_h - u_h) +j(v_h) - j(u_h) \geq \langle f, v_h - u_h\rangle.
\end{equation*}
\end{problem}

For the discrete parabolic problem we introduce the space-time bilinear form $B_{DG}(\cdot,\cdot)$ given by
\begin{equation}\label{e:DGbilinear}
B_{DG}(u,v) = \sum_{k=1}^M \int_{I_k} \langle \partial_t u,v \rangle + a(u,v) \dt.
\end{equation}
Similarly as in the elliptic case, the discrete parabolic obstacle problem associated with Problem~\ref{p:para1} is given by:
\begin{problem}\label{p:disc_para1}
Find $u_{h\tau} \in K_{h\tau}$ such that for all $v_{h\tau} \in K_{h\tau}$,
\begin{equation*} \label{p:disc_para}
\begin{split}
B_{DG}(u_{h\tau},v_{h\tau}-u_{h\tau}) &+ \sum_{k=1}^M \langle \left[ u_{h\tau}\right]^{k-1},(v_{h\tau}-u_{h\tau})_+\rangle \\ &  \geq \sum_{k=1}^M \int_{I_k} \langle f, v_{h\tau} - u_{h\tau} \rangle \dt.
\end{split}
\end{equation*}
\end{problem}
Here $v^n_+ = \lim_{s \to 0^+} u(t^n + s)$ and $[v]^n = v^n_+-v^n$. As the obstacle is assumed to be independent of time, the convex subset $K_{h\tau}$ is defined in a similar manner as $K_h$.
The discretization of parabolic friction problems associated with Problem~\ref{p:disc_para2} reads:
\begin{problem}\label{p:disc_para2}
Find $u_{h\tau} \in \mathbb{W}_{h\tau}(0,T)$ such that for all $v_{h\tau} \in \mathbb{W}_{h\tau}(0,T)$,
\begin{equation*} \label{p:disc_para_fric}
\begin{split}
B_{DG}(u_{h\tau},v_{h\tau}-u_{h\tau}) + \sum_{k=1}^M \int_{I_k} j(v_{h\tau}) -j(u_{h\tau}) \dt &+ \sum_{k=1}^M \langle \left[ u_{h\tau}\right]^{k-1},(v_{h\tau}-u_{h\tau})_+\rangle \\ &  \geq \sum_{k=1}^M \int_{I_k} \langle f, v_{h\tau} - u_{h\tau} \rangle \dt.
\end{split}
\end{equation*}
\end{problem}
We conclude the section with a variant of \cite[Lemma~2.7]{SchotzauSchwab} adapted to fractional operators. It establishes the coercivity of the bilinear form $B_{DG}(\cdot,\cdot)$ in combination with the jump terms. Note that the proof in \cite{SchotzauSchwab} uses only the coercivity of the bilinear form $a(\cdot,\cdot)$ and therefore applies to both local and nonlocal problems, in the appropriate function spaces.
\begin{lem}\label{l:coercive}
Let $B_{DG}(\cdot,\cdot)$ be defined as in~\eqref{e:DGbilinear}. Let $v_{h\tau} \in \mathbb{W}_{h\tau}(0,T)$. Then,
\begin{equation*}\label{e:coercive}
\begin{split}
B_{DG}(&v_{h\tau},v_{h\tau}) + \sum_{k=1}^M \langle [v_{h\tau}]^{k-1}, v_{h\tau +}^{k-1}\rangle  \geq \\ &\alpha \|v_{h\tau}\|^2_{L^2(0,T;H)} + \frac{1}{2}\|v_{h\tau +}^0\|_{L^2(\Omega)}^2 + \frac{1}{2}\sum_{k=1}^{M-1} \|[v_{h\tau}]^k\|_{L^2(\Omega)}^2 +\frac{1}{2}\|v_{h\tau -}^M\|_{L^2(\Omega)}^2.
\end{split}
\end{equation*}
\end{lem}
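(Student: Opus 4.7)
The strategy is to split $B_{DG}(v_{h\tau},v_{h\tau})$ into its spatial and temporal contributions and treat each separately. For the spatial piece, I simply apply coercivity of $a$ on each subinterval to obtain
\[
\sum_{k=1}^M \int_{I_k} a(v_{h\tau},v_{h\tau})\,\dt \;\geq\; \alpha \|v_{h\tau}\|_{L^2(0,T;H)}^2,
\]
which is the first term on the right-hand side.

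The temporal piece is the real content. Since $v_{h\tau}$ is polynomial (hence smooth) on each open slab $I_k$, the fundamental theorem of calculus yields
\[
\int_{I_k} \langle \partial_t v_{h\tau}, v_{h\tau}\rangle\,\dt \;=\; \tfrac{1}{2}\bigl(\|v_{h\tau-}^k\|_{L^2(\Omega)}^2 - \|v_{h\tau+}^{k-1}\|_{L^2(\Omega)}^2\bigr).
\]
For the jump terms I use the elementary polarization identity in $L^2(\Omega)$
\[
\langle [v_{h\tau}]^{k-1}, v_{h\tau+}^{k-1}\rangle \;=\; \tfrac{1}{2}\|v_{h\tau+}^{k-1}\|_{L^2(\Omega)}^2 - \tfrac{1}{2}\|v_{h\tau-}^{k-1}\|_{L^2(\Omega)}^2 + \tfrac{1}{2}\|[v_{h\tau}]^{k-1}\|_{L^2(\Omega)}^2,
\]
which is the standard trick to convert a jump paired against a one-sided value into a telescoping piece plus a nonnegative jump norm.

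Adding these two identities and summing over $k=1,\dots,M$ collapses the telescoping sum: the $\pm\tfrac{1}{2}\|v_{h\tau+}^{k-1}\|^2$ contributions cancel, and the remaining node values telescope to leave $\tfrac{1}{2}\|v_{h\tau-}^M\|_{L^2(\Omega)}^2 - \tfrac{1}{2}\|v_{h\tau-}^0\|_{L^2(\Omega)}^2 + \tfrac{1}{2}\sum_{k=1}^M\|[v_{h\tau}]^{k-1}\|_{L^2(\Omega)}^2$. Using the usual DG convention $v_{h\tau-}^0 = 0$, so that $[v_{h\tau}]^0 = v_{h\tau+}^0$, the $k=1$ jump contribution becomes exactly $\tfrac{1}{2}\|v_{h\tau+}^0\|_{L^2(\Omega)}^2$, while the $k=2,\dots,M$ jumps give $\tfrac{1}{2}\sum_{k=1}^{M-1}\|[v_{h\tau}]^k\|_{L^2(\Omega)}^2$, matching the claimed right-hand side.

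I do not foresee any serious obstacle: the argument is purely algebraic once one has the two identities, and it is identical to the local case of \cite{SchotzauSchwab} since only the coercivity of $a$ (which holds equally for the fractional bilinear form by assumption) and the DG jump algebra are used. The only point to be careful about is the convention for $v_{h\tau-}^0$, which must be fixed as $0$ so that the bookkeeping between the telescoping sum and the first jump comes out correctly.
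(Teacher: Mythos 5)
Your proof is correct and is essentially the standard argument that the paper delegates to \cite{SchotzauSchwab}: slab-wise integration by parts in time, the algebraic identity $\langle a-b,a\rangle = \tfrac12\|a\|^2-\tfrac12\|b\|^2+\tfrac12\|a-b\|^2$ for the jumps, telescoping, and coercivity of $a$ for the spatial part. Your bookkeeping, including the convention $v_{h\tau-}^0=0$ needed so that the $k=1$ jump produces $\tfrac12\|v_{h\tau+}^0\|^2$, is consistent with the statement.
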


\section{Elliptic problems}\label{sec:elliptic}
In this section we discuss the error analysis of elliptic variational inequalities introduced in Section~\ref{s:VI}. We address a priori and a posteriori error estimates for such problems both for the variational inequality and a mixed formulation. Combined with known regularity results the a priori estimates allow us to deduce convergence rates for the specific problems introduced in Section~\ref{intro}.
\subsection{A priori error estimates for variational inequalities}
We first discuss a priori error estimates for fractional elliptic variational inequalities  corresponding to contact problems in the domain. Corresponding results for the thin problems can be derived analogously. Observe an analogue of Falk's lemma for elliptic variational inequalities \cite{falk}, as adapted to Problem~\ref{p:ell1}:

\begin{lem}\label{l:falk_ell1}
Let $u \in K$ and $u_h \in K_h$ be solutions of Problem~\ref{p:ell1} and \ref{p:disc_ell1}, respectively. Then,
\begin{equation*} \label{e:ell_falk}
\begin{split}
\|u-u_h\|_{H}^2 \lesssim& \inf_{v \in K} \lbrace \|f-(-\Delta)^s u\|_{H^{*}} \|u_h-v\|_{H} \rbrace \\&+ \inf_{v_h \in K_h} \lbrace \|f-(-\Delta)^s u\|_{H^*} \|u-v_h\|_{H} + \|u-v_h\|_{H}^2   \rbrace.
\end{split}
\end{equation*}
\end{lem}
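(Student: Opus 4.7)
The plan is to mimic Falk's classical argument \cite{falk}, using coercivity of $a$ together with both variational inequalities and the fact that $(-\Delta)^s u \in H^*$ (so that the residual $f-(-\Delta)^s u$ defines a functional on $H$). First I would start from $\alpha\|u-u_h\|_H^2 \leq a(u-u_h, u-u_h)$ and, for an arbitrary $v_h \in K_h$, perform the splitting
\begin{equation*}
a(u-u_h, u-u_h) = a(u-u_h, u-v_h) + a(u, v_h - u_h) - a(u_h, v_h - u_h)\ .
\end{equation*}
The first term is bounded by continuity as $C\|u-u_h\|_H\|u-v_h\|_H$, and the third is absorbed by the discrete variational inequality $a(u_h, v_h-u_h) \geq \langle f, v_h-u_h\rangle$. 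Identifying $a(u,\cdot) = \langle (-\Delta)^s u,\cdot\rangle$, the remaining contribution collapses to the residual $-\langle f-(-\Delta)^s u, v_h - u_h\rangle$.

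Next I would bring in the continuous test function $v \in K$ by writing $v_h - u_h = (v_h - u) - (u_h - v) - (v - u)$, so that
\begin{equation*}
-\langle f-(-\Delta)^s u,\, v_h - u_h\rangle = -\langle f-(-\Delta)^s u, v_h - u\rangle + \langle f-(-\Delta)^s u, u_h - v\rangle + \langle f-(-\Delta)^s u, v - u\rangle\ .
\end{equation*}
The key point is that the continuous variational inequality for $u$, tested against $v \in K$, is exactly the statement $\langle f-(-\Delta)^s u, v-u\rangle \leq 0$, so the last summand drops. The other two are bounded by duality, producing $\|f-(-\Delta)^s u\|_{H^*}\|u-v_h\|_H$ and $\|f-(-\Delta)^s u\|_{H^*}\|u_h-v\|_H$, respectively.

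Finally I would combine everything and absorb the $\|u-u_h\|_H$ factor from $C\|u-u_h\|_H\|u-v_h\|_H$ on the left-hand side using Young's inequality, obtaining
\begin{equation*}
\|u-u_h\|_H^2 \lesssim \|u-v_h\|_H^2 + \|f-(-\Delta)^s u\|_{H^*}\|u-v_h\|_H + \|f-(-\Delta)^s u\|_{H^*}\|u_h - v\|_H\ ,
\end{equation*}
and then take the infimum over $v_h \in K_h$ and $v \in K$ independently. The main delicate step is the decomposition of $v_h - u_h$ that isolates one approximation error in $K_h$ (for $u$) and one in $K$ (for $u_h$); once this splitting is in place the rest is continuity, coercivity and Young's inequality. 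I expect the only place where subtlety is required is verifying that $(-\Delta)^s u$ indeed acts continuously on $H$, which is immediate from the Lax--Milgram isomorphism $(-\Delta)^s : H \to H^*$ noted in Section~\ref{sec:Definitions}.
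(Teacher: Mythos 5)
Your argument is correct and is precisely the classical Falk decomposition that the paper invokes by citation for this lemma: coercivity, the splitting via an arbitrary $v_h\in K_h$, absorption of $-a(u_h,v_h-u_h)$ by the discrete inequality, the identity $a(u,\cdot)=\langle(-\Delta)^su,\cdot\rangle$, the sign $\langle f-(-\Delta)^su,v-u\rangle\le 0$ from the continuous inequality, and Young's inequality to absorb $C\|u-u_h\|_H\|u-v_h\|_H$. All algebraic identities (in particular $v_h-u_h=(v_h-u)-(u_h-v)-(v-u)$) check out, so nothing further is needed.
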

Accounting for $j(\cdot)$, a similar result holds for Problem~\ref{p:ell2}.

\begin{lem}\label{l:falk_ell2}
Let $u \in H$ and $u_h \in \mathbb{V}_h$ be solutions of Problem~\ref{p:ell2} and \ref{p:disc_ell2}, respectively. Let $j(\cdot)$ be a proper, convex, l.s.c. functional on $H$. Then,
\begin{equation*}
\begin{split}
\|u-u_h\|_{H}^2 \lesssim& \inf_{v \in H} \lbrace \|f-(-\Delta)^s u\|_{H^*} \|u_h-v\|_{H} +  j(u_h) - j(v) \rbrace \\&+ \inf_{v_h \in \mathbb{V}_h} \lbrace \|f-(-\Delta)^s u\|_{H^*} \|u-v_h\|_{H} +  j(u) - j(v_h)  + \|u-v_h\|_{H}^2   \rbrace.
\end{split}
\end{equation*}
\end{lem}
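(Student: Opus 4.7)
The plan is to adapt the Falk-type argument of Lemma~\ref{l:falk_ell1} to account for the nonsmooth convex functional $j$. Coercivity of $a(\cdot,\cdot)$ on $H$ reduces the estimate to bounding $a(u - u_h, u - u_h)$, and the two variational inequalities are then combined symmetrically to produce this quantity together with residual and $j$-contributions that can be distributed across the two infima.

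Concretely, I would test Problem~\ref{p:ell2} with an arbitrary $v \in H$ and Problem~\ref{p:disc_ell2} with an arbitrary $v_h \in \mathbb{V}_h$, then add the two inequalities. The bilinearity identity
\begin{equation*}
a(u, v - u) + a(u_h, v_h - u_h) = a(u, v - u_h) + a(u_h, v_h - u) - a(u - u_h, u - u_h)
\end{equation*}
isolates the squared error on the left. The identity $a(u, w) - \langle f, w\rangle = \langle (-\Delta)^s u - f, w\rangle$ then rewrites the first summand on the right as a duality pairing, and the splitting $a(u_h, v_h - u) = a(u_h - u, v_h - u) + a(u, v_h - u)$ produces a second such pairing plus a cross term $a(u_h - u, v_h - u)$. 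Bounding the two pairings by $\|f - (-\Delta)^s u\|_{H^*}$ via duality, the cross term by continuity of $a$, and absorbing $\tfrac{\alpha}{2}\|u - u_h\|_H^2$ back into the left via Young's inequality yields a pointwise bound; taking separate infima over $v$ and $v_h$ finally distributes the accumulated $j$-values into the two infima of the statement.

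The main obstacle is the bookkeeping of the nonsmooth functional $j$: it admits no dual representation, so the four quantities $j(u), j(u_h), j(v), j(v_h)$ must be carried verbatim through the algebra and split cleanly across the two infima as $j(u_h) - j(v)$ and $j(u) - j(v_h)$. Properness, convexity, and lower semi-continuity of $j$ guarantee both infima are well defined; in particular, taking $v = u_h \in \mathbb{V}_h \subset H$ in the first infimum shows it is non-positive, so that in the conforming setting considered here only the second infimum effectively contributes to the convergence rate, paralleling the role of $\inf_{v_h \in K_h}$ in Lemma~\ref{l:falk_ell1}.
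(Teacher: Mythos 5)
Your overall strategy is exactly the standard Falk argument (test the continuous inequality with $v$, the discrete one with $v_h$, add, isolate $a(u-u_h,u-u_h)$ via the bilinearity identity, split the remaining terms into two duality pairings plus a cross term, and absorb with Young); the paper itself gives no proof of this lemma beyond "accounting for $j$", so this is the intended route, and the identity you write down is correct.

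There is, however, a concrete problem in the last step, where you claim the accumulated $j$-values "split cleanly across the two infima as $j(u_h)-j(v)$ and $j(u)-j(v_h)$". Carrying out the addition you describe, the continuous inequality contributes $j(v)-j(u)$ and the discrete one $j(v_h)-j(u_h)$, so after regrouping one arrives at
\begin{equation*}
a(u-u_h,u-u_h) \le \langle (-\Delta)^s u - f, v-u_h\rangle + \langle (-\Delta)^s u - f, v_h-u\rangle + a(u_h-u,v_h-u) + \bigl(j(v)-j(u_h)\bigr) + \bigl(j(v_h)-j(u)\bigr),
\end{equation*}
i.e.\ the $j$-differences come out with the \emph{opposite} sign to those in the statement (and to those in your write-up). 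There is no way to pass from $j(v)-j(u_h)$ to $j(u_h)-j(v)$ for a general convex l.s.c.\ $j$, and indeed the statement with the reversed differences cannot be correct as it stands: for a proper $j$ attaining $+\infty$, or even for the friction functional $j_S$ with $v=tw$, $t\to\infty$, the infimand $\|f-(-\Delta)^su\|_{H^*}\|u_h-v\|_H + j(u_h)-j(v)$ tends to $-\infty$, which would contradict the nonnegativity of $\|u-u_h\|_H^2$. The version your derivation actually produces, with $j(v)-j(u_h)$ and $j(v_h)-j(u)$, is the classical Falk/Glowinski estimate for inequalities of the second kind, and there each infimand is nonnegative (by the variational inequality itself). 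So either the final step of your proposal must be corrected, or you should note explicitly that the lemma as printed has the two $j$-differences transposed; your observation that the first infimum vanishes at $v=u_h$ survives either sign convention.
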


\begin{rem}
If equality holds in the variational inequality, the residual $f - (-\Delta)^s u$ vanishes and we recover Cea's lemma. In the general case, $f - (-\Delta)^s u$ does not vanish and the convergence rate reduces by a factor $2$.
Since we assume that $K_h \subset K$, $\mathbb{V}_h \subset H$, we have the internal approximation of the variational inequalities, thus we can choose $v = u_h$ and so the first infimum in Lemma~\ref{l:falk_ell1} and Lemma~\ref{l:falk_ell2} vanishes.
\end{rem}

We briefly discuss explicit convergence rates for the elliptic problems. Under the assumption that $u \in H_0^{\ell}(\Omega)$ for some $\ell >s$, we can use standard interpolation argument to establish a convergence rate of discrete solution. Note that for the obstacle problem as defined in Problem~\ref{prob:obst}, $K_o = \lbrace v \in H_0^s(\Omega): v \geq \chi\rbrace$ and $K_{oh} = \lbrace v_h \in \mathbb{V}_h: v_h \geq \chi\rbrace$.
\begin{lem}
Let $f \in L^2(\Omega) \cap \mathbb{V}_h$ and $\chi \in \mathbb{V}_h$. Let $u \in K_o$ and $u_h \in K_{oh}$ be solutions of Problem~\ref{p:ell1}, \ref{p:disc_ell1}. Then,
\begin{equation}\label{e:obst_conv_rate}
\|u-u_h \|_{H_0^s(\Omega)} \leq C h^{\frac{\ell-s}{2}} \|u\|_{H^\ell(\Omega)},
\end{equation}
for $0<s<1 \leq \ell$.
\end{lem}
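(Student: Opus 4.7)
The starting point is the Falk-type estimate of Lemma~\ref{l:falk_ell1}. The conforming discretization $K_{oh}\subset K_o$ lets me take $v=u_h$ in the first infimum, which then vanishes. What remains is to exhibit a suitable $v_h\in K_{oh}$ approximating $u$ and to control
\[
\|u-u_h\|_{H_0^s(\Omega)}^2 \lesssim \|f-(-\Delta)^s u\|_{H^{-s}(\Omega)}\,\|u-v_h\|_{H_0^s(\Omega)} + \|u-v_h\|_{H_0^s(\Omega)}^2.
\]

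The natural candidate is $v_h=I_h u$, the nodal piecewise linear interpolant of $u$ on $\mathcal{T}_h$; its definition is legitimate because $u\in H^\ell(\Omega)$ with $\ell>s$ is assumed regular enough to admit pointwise values via Sobolev embedding. The admissibility $I_h u\in K_{oh}$ uses the hypothesis $\chi\in\mathbb{V}_h$: at every node $x_i$ one has $(I_h u)(x_i)=u(x_i)\geq\chi(x_i)$, and since $I_h u-\chi$ is continuous and piecewise linear on the same triangulation, non-negativity at the nodes forces non-negativity on all of $\Omega$.

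Next I combine the standard fractional interpolation estimate $\|u-I_h u\|_{H^s(\Omega)} \lesssim h^{\ell-s}\|u\|_{H^\ell(\Omega)}$ with the residual bound
\[
\|f-(-\Delta)^s u\|_{H^{-s}(\Omega)} \lesssim \|f\|_{L^2(\Omega)}+\|u\|_{H^s(\Omega)},
\]
which follows from the mapping property $(-\Delta)^s\colon H_0^s(\Omega)\to H^{-s}(\Omega)$ and the embedding $L^2(\Omega)\hookrightarrow H^{-s}(\Omega)$. Plugging these into the displayed inequality above yields
\[
\|u-u_h\|_{H_0^s(\Omega)}^2 \lesssim C(f,u)\,h^{\ell-s}\|u\|_{H^\ell(\Omega)} + h^{2(\ell-s)}\|u\|_{H^\ell(\Omega)}^2,
\]
and the first term dominates for small $h$. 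Taking square roots produces the asserted $h^{(\ell-s)/2}$ rate, with the constant $C$ absorbing $\|f\|_{L^2(\Omega)}$ and lower-order norms of $u$.

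The principal obstacle is the admissibility step for the interpolant: for less regular $u$ or non-nodal obstacles, $I_h$ would need to be replaced by a positivity-preserving quasi-interpolant (Chen--Nochetto, or a Scott--Zhang variant with obstacle projection), for which interpolation estimates and the constraint $v_h\geq\chi$ must be verified simultaneously. The hypothesis $\chi\in\mathbb{V}_h$ is precisely what allows the elementary argument above. A conceptual point worth recording, as anticipated in the remark after Lemma~\ref{l:falk_ell1}, is that the characteristic halving of the convergence rate compared with linear elliptic problems originates from the non-vanishing residual $f-(-\Delta)^s u$ on the coincidence set.
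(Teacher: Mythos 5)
Your proposal is correct and coincides with the paper's intended argument: the paper gives no explicit proof, merely invoking Lemma~\ref{l:falk_ell1} together with a ``standard interpolation argument,'' and your write-up (first infimum killed by conformity $K_{oh}\subset K_o$, nodal interpolant admissible because $\chi\in\mathbb{V}_h$, residual bounded via $L^2\hookrightarrow H^{-s}$ and the mapping property of $(-\Delta)^s$, square root yielding the halved rate $h^{(\ell-s)/2}$) is exactly that argument made precise. Your closing caveat about the nodal interpolant requiring $\ell>n/2$ for pointwise values, and the quasi-interpolant replacement otherwise, is a fair and worthwhile refinement of what the paper leaves implicit.
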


\begin{rem} \label{regremark}
Provided that $f \in H^r_0(\Omega)$ for some $r\geq -s$ and $\partial\Omega \in C^{\infty}$, the solution $u \in H_0^s(\Omega)$ of the unconstrained problem
\begin{equation*}
a(u,v) = \langle f,v \rangle,\;\; v\in H_0^s(\Omega),
\end{equation*}
belongs to
\begin{equation*}
u \in \begin{cases}
H^{2s+r}, & s+r \leq \frac{1}{2}\\
H^{s+\frac{1}{2}-\varepsilon}, & s+r \geq \frac{1}{2}
\end{cases}
\end{equation*}
This implies that for $f \in L^2(\Omega)$, we may expect the solution $u$ to have up to $s+\frac{1}{2}$ derivatives in $L^2(\Omega)$. We conclude from the estimate in \eqref{e:obst_conv_rate} that $\|u-u_h \|_{H_0^s(\Omega)} \leq \mathcal{O}(h^{1/4-\varepsilon})$. The smoothness of the solution is limited by the behavior near the Dirichlet boundary $\partial \Omega$, where $u(x) \sim d(x,\partial \Omega)$. This behaviour has been exploited in \cite{Acosta1} who showed that the solution  admits $1+s-\varepsilon$ derivatives in an appropriate  weighted Sobolev space. For further discussion of the expected regularity of solutions of variational inequalities, see \cite{Brezis1}, as well as \cite{bls, Salgado} for refined estimates in the case of the nonlocal obstacle problem. 
\end{rem}

\subsection{A posteriori error estimate for variational inequalities}
In this section we discuss a posteriori error estimates of elliptic variational inequalities in Problems~\ref{p:ell1} and \ref{p:ell2}. We provide a careful analysis of Problems~\ref{prob:obst} and \ref{prob:int_friction} with contact in the interior of the domain $\Omega$, so that corresponding bounds for the thin contact Problems~\ref{prob:contact} and \ref{prob:friction} readily follow. For simplicity, we consider data $f,\;\chi \in \mathbb{V}_h$ in the finite element space; for the modifications to general $f, \; \chi$ see \cite{Nochetto, Veeser}.\\
Consider Problem \ref{prob:obst}. We define the Lagrange multiplier $\sigma \in H^*$ as 
\begin{equation}\label{e:sigma1}
\langle \sigma, v  \rangle = \langle f, v \rangle - a(u, v),\; \forall v \in H.
\end{equation}
Let $\sigma_h \in \mathbb{V}_h$ be the discrete Lagrange multiplier defined by $\langle \sigma_h, v_h\rangle = \langle f, v_h \rangle - a(u_h,v_h)$ for $v_h \in \mathbb{V}_h$. Also let $r_h = f - (-\Delta)^s u_h$. Then the following result holds:
\begin{lem}[Obstacle problem]\label{l:Apost_obst}
Let $u,\; u_h$ be solutions of Problem~\ref{p:ell1} and \ref{p:disc_ell1}, respectively, associated with Problem~\ref{prob:obst}. Assume that $f \in \mathbb{V}_h$ and $\chi \in \mathbb{V}_h$. Then,
\begin{equation*}
\|u-u_h\|^2_{H_0^s(\Omega)} + \|\sigma-\sigma_h\|^2_{H^{-s}(\Omega)} \lesssim \|r_h-\sigma_h\|^2_{H^{-s}(\Omega)} - \langle \sigma_h,u_h-\chi \rangle.
\end{equation*}
\end{lem}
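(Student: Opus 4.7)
The plan is to establish the energy estimate for $u-u_h$ by coercivity of $a(\cdot,\cdot)$, and then deduce the multiplier bound by a triangle-inequality argument. From the definitions of $\sigma$ and $r_h$ one has $r_h-\sigma=a(u-u_h,\cdot)$, whence
\begin{align*}
\alpha\|u-u_h\|^2_{H_0^s(\Omega)} \leq a(u-u_h,u-u_h) = \langle r_h-\sigma_h,\,u-u_h\rangle + \langle \sigma_h-\sigma,\,u-u_h\rangle.
\end{align*}
The first term I would bound by duality by $\|r_h-\sigma_h\|_{H^{-s}(\Omega)}\|u-u_h\|_{H_0^s(\Omega)}$, leaving only the second, sign-sensitive term.

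For that term, I would exploit the sign and complementarity structure of both multipliers. Testing Problem~\ref{p:ell1} with $v=u+\phi$, $\phi\geq 0$, gives $\sigma\leq 0$; choosing $v=\chi$ together with $u\geq\chi$ then forces the complementarity $\langle\sigma,u-\chi\rangle=0$, so that $\langle\sigma,u-u_h\rangle = \langle\sigma,\chi-u_h\rangle\geq 0$ as a pairing of two nonpositive functions. At the discrete level, testing Problem~\ref{p:disc_ell1} with $v_h=u_h+\phi_h$ for nonnegative $\phi_h\in\mathbb V_h$ gives $\langle\sigma_h,\phi_h\rangle\leq 0$, which transfers to $\sigma_h\leq 0$ pointwise via the nonnegative piecewise linear nodal basis. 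Combining these yields
\begin{align*}
\langle\sigma_h-\sigma,\,u-u_h\rangle \leq \langle\sigma_h,u-u_h\rangle = \langle\sigma_h,u-\chi\rangle - \langle\sigma_h,u_h-\chi\rangle \leq -\langle\sigma_h,u_h-\chi\rangle,
\end{align*}
since $\sigma_h\leq 0$ and $u\geq\chi$ render the first term on the middle expression nonpositive. Plugging this into the coercivity inequality and absorbing $\|u-u_h\|^2_{H_0^s(\Omega)}$ into the left-hand side via Young's inequality gives $\|u-u_h\|^2_{H_0^s(\Omega)}\lesssim \|r_h-\sigma_h\|^2_{H^{-s}(\Omega)} - \langle\sigma_h,u_h-\chi\rangle$.

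For the multiplier component I would use the identity $\sigma-\sigma_h = (r_h-\sigma_h)-a(u-u_h,\cdot)$ in $H^{-s}(\Omega)$, which together with continuity of $a$ produces $\|\sigma-\sigma_h\|_{H^{-s}(\Omega)}\lesssim \|r_h-\sigma_h\|_{H^{-s}(\Omega)}+\|u-u_h\|_{H_0^s(\Omega)}$. Squaring and adding to the energy bound closes the proof. The main technical obstacle is the transfer from the weak discrete sign condition $\langle\sigma_h,\phi_h\rangle\leq 0$ on $\mathbb V_h$ to the pointwise inequality $\sigma_h\leq 0$ needed to kill $\langle\sigma_h,u-\chi\rangle$: this is immediate if the multiplier is discretized via a lumped $L^2$ pairing or in the piecewise constant space $\mathbb{M}_H$, but for the pure Galerkin-residual definition it relies on the nonnegativity properties of the P1 mass matrix and would otherwise force additional oscillation terms into the estimate.
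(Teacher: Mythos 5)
Your argument is correct and follows essentially the same route as the paper's proof: the same residual identity for $\sigma-\sigma_h$, coercivity plus Young's inequality for the energy term, and the identical complementarity manipulations reducing $-\langle\sigma-\sigma_h,u-u_h\rangle$ to $-\langle\sigma_h,u_h-\chi\rangle$. The subtlety you flag about passing from $\langle\sigma_h,\phi_h\rangle\le 0$ for nonnegative $\phi_h\in\mathbb{V}_h$ to the pointwise sign $\sigma_h\le 0$ (needed to discard $\langle\sigma_h,u-\chi\rangle$) is present in the paper's proof as well, where it is used without comment; your remark that a lumped or piecewise-constant multiplier avoids it is the standard remedy.
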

\begin{proof}
By definition of $\sigma$ and $\sigma_h$ the following equality holds
\begin{equation}\label{e:residual}
a(u-u_h,v) + \langle \sigma - \sigma_h,v \rangle  = \langle f, v \rangle -a(u_h,v) - \langle \sigma_h,v \rangle = \langle r_h, v \rangle - \langle \sigma_h,v \rangle.
\end{equation}
Choosing $v = u-u_h$ in \eqref{e:residual}, we obtain
\begin{equation*}
\| u-u_h\|_{H_0^s(\Omega)}^2 \leq \frac{1}{2}\|r_h-\sigma_h\|_{H^{-s}(\Omega)}^2 + \frac{1}{2}\| u-u_h\|_{H_0^s(\Omega)}^2 - \langle \sigma - \sigma_h,u-u_h \rangle.
\end{equation*}
We note that for any $w\in H_0^s(\Omega)$
\begin{equation*}
\langle \sigma - \sigma_h,w \rangle = a(u_h-u,w) +\langle r_h-\sigma_h,w \rangle,
\end{equation*}
which leads to estimate
\begin{equation}\label{e:ellapost_midway}
\|u-u_h\|^2_{H_0^s(\Omega)} + \|\sigma-\sigma_h\|^2_{H^{-s}(\Omega)} \lesssim \|r_h-\sigma_h\|^2_{H^{-s}(\Omega)} - \langle \sigma - \sigma_h,u-u_h \rangle.
\end{equation}
To determine a computable bound for the second term, for the obstacle problem here, we note that $\langle \sigma, u-u_h\rangle \geq 0$. In addition,
\begin{equation*}
\begin{split}
\langle \sigma_h,u-u_h \rangle  &= \langle \sigma_h,u-\chi\rangle - \langle \sigma_h,u_h-\chi \rangle  \\ &\leq  - \langle \sigma_h,u_h-\chi \rangle.
\end{split}
\end{equation*}
The result follows by combining the above estimates.
\end{proof}

\begin{rem}
The thin obstacle problem falls into the same framework. Here, the convex set $K \subset {H}^s_{\widetilde{\Gamma}^C}(\Omega)$ is replaced by
\begin{equation}
K_s(\Omega) = \lbrace v \in H^s_{\widetilde{\Gamma}^C}(\Omega) : v|_{\widetilde{\Gamma}} \geq g \;\mathrm{a.e.\;on}\; \widetilde{\Gamma} \rbrace.
\end{equation}
Estimate \eqref{e:ellapost_midway} then holds verbatim, if the second term on the right hand side is taken on $\widetilde{\Gamma}$. Note that $\langle \sigma, u|_{\widetilde{\Gamma}} - g \rangle = 0$ and $u|_{\widetilde{\Gamma}} -g \geq 0$ almost everywhere on $\widetilde{\Gamma}$, so that
\begin{equation}
\langle \sigma_h - \sigma,u|_{\widetilde{\Gamma}}-u_h|_{\widetilde{\Gamma}} \rangle \leq \langle \sigma_h, g- u_h|_{\widetilde{\Gamma}} \rangle.
\end{equation}
\end{rem}
This implies:
\begin{lem}[Signorini problem]
Let $u,\;u_h$ be solutions of Problems~\ref{p:ell1} and \ref{p:disc_ell1}, respectively, associated with Problem~\ref{prob:contact}. Assume that $f \in \mathbb{V}_h$ and $g \in \mathbb{V}_h$. Then,
\begin{equation*}
\|u-u_h\|^2_{{H}^s_{\widetilde{\Gamma}^C}(\Omega)} + \|\sigma-\sigma_h\|^2_{H^{-s}(\Omega)} \lesssim \|r_h-\sigma_h\|^2_{H^{-s+\frac{1}{2}}(\widetilde{\Gamma})} - \langle \sigma_h,g - u_h|_{\widetilde{\Gamma}} \rangle.
\end{equation*}
\end{lem}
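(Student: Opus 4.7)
The plan is to follow the proof of Lemma~\ref{l:Apost_obst} for the obstacle problem almost verbatim, adjusting only for the fact that the Lagrange multiplier is now supported on the codimension-one set $\widetilde{\Gamma}$ rather than in $\Omega$. In consequence, the natural norm for the residual $r_h - \sigma_h$ is $H^{-s+1/2}(\widetilde{\Gamma})$ (the dual of the trace space $H^{s-1/2}(\widetilde{\Gamma})$), and the complementarity information lives on $\widetilde{\Gamma}$ rather than in the bulk.

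Concretely, I would define $\sigma$ and $\sigma_h$ exactly as in \eqref{e:sigma1} and its discrete analogue, so that the residual identity~\eqref{e:residual} continues to hold. Testing with $v = u - u_h \in H^s_{\widetilde{\Gamma}^C}(\Omega)$ and applying coercivity of $a$ together with Young's inequality gives
\begin{equation*}
\|u - u_h\|_{H^s_{\widetilde{\Gamma}^C}(\Omega)}^2 \lesssim \|r_h - \sigma_h\|_{H^{-s+1/2}(\widetilde{\Gamma})}^2 - \langle \sigma - \sigma_h, u - u_h\rangle,
\end{equation*}
where I use the trace inequality $\|v|_{\widetilde{\Gamma}}\|_{H^{s-1/2}(\widetilde{\Gamma})} \lesssim \|v\|_{H^s_{\widetilde{\Gamma}^C}(\Omega)}$, legitimate because $r_h - \sigma_h$ acts only through $v|_{\widetilde{\Gamma}}$ (consistently with the strong formulation of Problem~\ref{pr:contact}, where $(-\Delta)^s u - f$ is supported on $\widetilde{\Gamma}$). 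Taking the supremum in the rearranged residual identity then yields $\|\sigma - \sigma_h\|_{H^{-s}(\Omega)} \lesssim \|u-u_h\|_{H^s_{\widetilde{\Gamma}^C}(\Omega)} + \|r_h - \sigma_h\|_{H^{-s+1/2}(\widetilde{\Gamma})}$, producing the analogue of~\eqref{e:ellapost_midway} adapted to the thin problem.

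The remaining task is to bound the cross term via the Signorini complementarity conditions recorded in the Remark preceding the lemma. Since $\sigma$ is supported on $\widetilde{\Gamma}$, $\langle \sigma - \sigma_h, u - u_h\rangle = \langle \sigma - \sigma_h, u|_{\widetilde{\Gamma}} - u_h|_{\widetilde{\Gamma}}\rangle$; exploiting the identity $\langle \sigma, u|_{\widetilde{\Gamma}} - g\rangle = 0$ together with $u|_{\widetilde{\Gamma}}, u_h|_{\widetilde{\Gamma}} \ge g$ a.e.~on $\widetilde{\Gamma}$ and the sign of $\sigma_h$ (inherited from the strong complementarity) reproduces the obstacle-case sign estimate on $\widetilde{\Gamma}$, namely $\langle \sigma_h - \sigma, u|_{\widetilde{\Gamma}} - u_h|_{\widetilde{\Gamma}}\rangle \le \langle \sigma_h, g - u_h|_{\widetilde{\Gamma}}\rangle$. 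Combining this with the previous step delivers the asserted a posteriori bound.

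The main technical obstacle is making rigorous the trace interpretation of $r_h - \sigma_h$: a priori it is a distribution on $\Omega$, but in the thin problem its pairing with $v \in H^s_{\widetilde{\Gamma}^C}(\Omega)$ depends only on $v|_{\widetilde{\Gamma}}$, which is precisely the content of the auxiliary lemma appearing in the proof of Problem~\ref{pr:friction}. Once this identification is in place, the $H^{-s+1/2}(\widetilde{\Gamma})$-norm on the right-hand side is the correct dual object, and the sign manipulations are purely algebraic and parallel to those in Lemma~\ref{l:Apost_obst}.
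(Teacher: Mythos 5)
Your argument is essentially identical to the paper's: the paper proves this lemma via the preceding Remark, which states that estimate \eqref{e:ellapost_midway} carries over with the duality taken on $\widetilde{\Gamma}$ and then bounds the cross term by $\langle \sigma_h - \sigma, u|_{\widetilde{\Gamma}} - u_h|_{\widetilde{\Gamma}}\rangle \le \langle \sigma_h, g - u_h|_{\widetilde{\Gamma}}\rangle$ using $\langle \sigma, u|_{\widetilde{\Gamma}}-g\rangle = 0$ and the sign conditions, exactly as you do. Your additional remarks on the trace interpretation of $r_h - \sigma_h$ only make explicit what the paper leaves implicit, so the proposal is correct and follows the same route.
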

\medskip

For the interior friction problem, let $\sigma$ be defined as in Equation~\eqref{e:sigma1} and let $\sigma_h$ to be the discrete counterpart of $\sigma$.
\begin{lem}[Interior friction problem]
Let $u,u_h$ be solutions of Problems~\ref{p:ell2} and \ref{p:disc_ell2}, respectively, associated with Problem~\ref{prob:int_friction}. Assume that $f \in \mathbb{V}_h$. Then,
\begin{equation*}
\begin{split}
\|u-u_h\|_{H^s_0(\Omega)}^2 + \|\sigma-\sigma_h\|_{H^{-s}(\Omega)}^2 \lesssim &\|r_h-\sigma_h\|_{H^{-s}(\Omega)}^2 + \|(|\sigma_h|-\mathcal{F})^+\|_{H^{-s}(\Omega)}^2 \\ &  + \langle (|\sigma_h|-\mathcal{F})^- , |u_h|\rangle - \langle \sigma_h,u_h\rangle + \langle |\sigma_h|,|u_h|\rangle.
\end{split}
\end{equation*}
\end{lem}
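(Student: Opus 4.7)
I will mirror the structure of Lemma~\ref{l:Apost_obst}, replacing the obstacle complementarity by the friction-specific identities satisfied by $\sigma$ and $\sigma_h$. Subtracting the defining relations of $\sigma$ and $\sigma_h$ yields the residual identity
\begin{equation*}
a(u-u_h, v) + \langle \sigma - \sigma_h, v\rangle = \langle r_h - \sigma_h, v\rangle\quad\text{for all } v \in H^s_0(\Omega),
\end{equation*}
exactly as in the obstacle case. Testing with $v = u-u_h$, invoking coercivity of $a(\cdot,\cdot)$ and Young's inequality, and using the same identity with arbitrary $w$ to bound $\|\sigma - \sigma_h\|_{H^{-s}}$ in terms of $\|u-u_h\|_{H^s_0}$ and $\|r_h - \sigma_h\|_{H^{-s}}$, the task reduces to bounding $-\langle \sigma - \sigma_h, u-u_h\rangle$ by the desired a posteriori indicators up to an absorbable multiple of $\|u-u_h\|_{H^s_0}^2$.

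The crucial new input is the set of friction identities for $\sigma$ combined with the pointwise dominance of $\sigma_h$. Testing the continuous variational inequality of Problem~\ref{p:ell2} with $v = 0$ and $v = 2u$ gives $\langle \sigma, u\rangle = j(u) = \langle \mathcal{F}, |u|\rangle$, and the choice $v = u_h$ then yields $\langle \sigma, u_h\rangle \leq j(u_h) = \langle \mathcal{F}, |u_h|\rangle$. Combining these with the pointwise bound $\sigma_h u \leq |\sigma_h|\,|u|$, a direct expansion produces
\begin{align*}
-\langle \sigma - \sigma_h, u-u_h\rangle &= \langle \sigma_h, u\rangle - \langle \sigma_h, u_h\rangle - \langle \sigma, u\rangle + \langle \sigma, u_h\rangle\\
&\leq \langle |\sigma_h|, |u|\rangle - \langle \sigma_h, u_h\rangle - \langle \mathcal{F}, |u|\rangle + \langle \mathcal{F}, |u_h|\rangle\\
&= \langle |\sigma_h|-\mathcal{F}, |u|-|u_h|\rangle + \langle |\sigma_h|, |u_h|\rangle - \langle \sigma_h, u_h\rangle.
\end{align*}
Two of the three friction-specific terms in the statement have now emerged; what remains is to extract $\|(|\sigma_h|-\mathcal{F})^+\|_{H^{-s}}^2$ and $\langle (|\sigma_h|-\mathcal{F})^-, |u_h|\rangle$ from the residual pairing $\langle |\sigma_h|-\mathcal{F}, |u|-|u_h|\rangle$.

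To do so I split $|\sigma_h|-\mathcal{F} = (|\sigma_h|-\mathcal{F})^+ - (|\sigma_h|-\mathcal{F})^-$. The positive part is controlled by Young's inequality in the $H^{-s}$--$H^s_0$ duality,
\begin{equation*}
\langle (|\sigma_h|-\mathcal{F})^+, |u|-|u_h|\rangle \leq \tfrac{1}{2\delta}\,\|(|\sigma_h|-\mathcal{F})^+\|_{H^{-s}}^2 + \tfrac{C^2\delta}{2}\,\|u-u_h\|_{H^s_0}^2,
\end{equation*}
using the Lipschitz estimate $\||u|-|u_h|\|_{H^s_0} \leq C\,\|u-u_h\|_{H^s_0}$. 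For the negative part, the non-negativity of $(|\sigma_h|-\mathcal{F})^-$ and $|u|$ alone suffices:
\begin{equation*}
-\langle (|\sigma_h|-\mathcal{F})^-, |u|-|u_h|\rangle = \langle (|\sigma_h|-\mathcal{F})^-, |u_h|-|u|\rangle \leq \langle (|\sigma_h|-\mathcal{F})^-, |u_h|\rangle.
\end{equation*}
Assembling the pieces and choosing $\delta$ small enough to absorb $\|u-u_h\|_{H^s_0}^2$ into the left-hand side yields the claim. The main obstacle is the Lipschitz estimate $\||u|-|u_h|\|_{H^s_0} \leq C\,\|u-u_h\|_{H^s_0}$: while the $L^2$-component is immediate from the pointwise inequality $||u|-|u_h|| \leq |u-u_h|$, compatibility with the Aronszajn--Slobodeckij seminorm for $s$ close to $1$ is delicate and must be invoked as a known boundedness property of Lipschitz Nemytskii operators on fractional Sobolev spaces.
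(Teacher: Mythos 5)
Your overall strategy coincides with the paper's: the same residual identity reduces everything to bounding $\langle \sigma - \sigma_h, u_h - u\rangle$, and you use exactly the same friction identities $\langle \sigma, u\rangle = \langle \mathcal{F},|u|\rangle$ and $\langle \sigma, u_h\rangle \le \langle \mathcal{F},|u_h|\rangle$ together with $\sigma_h u \le |\sigma_h|\,|u|$; your algebra correctly produces the terms $\langle|\sigma_h|,|u_h|\rangle - \langle\sigma_h,u_h\rangle$ and the pairing involving $|\sigma_h|-\mathcal{F}$. There is, however, a genuine gap in your final step. To control $\langle (|\sigma_h|-\mathcal{F})^+, |u|-|u_h|\rangle$ you invoke the estimate $\||u|-|u_h|\|_{H^s_0} \le C\|u-u_h\|_{H^s_0}$. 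This is \emph{not} the boundedness of the Nemytskii operator $v\mapsto|v|$ (which is the trivial statement $\||v|\|_{H^s}\le\|v\|_{H^s}$, immediate from $\bigl||v(x)|-|v(y)|\bigr|\le|v(x)-v(y)|$ in the Aronszajn--Slobodeckij seminorm); it is its \emph{Lipschitz continuity}, a strictly stronger property that fails on $H^s$ for $s>\tfrac12$ (already the map $u\mapsto|u|$ on $H^1$ is continuous but not Lipschitz, since $\nabla|u|=\mathrm{sgn}(u)\nabla u$ and the sign can flip on large sets under small perturbations). Since the lemma is asserted for all $s\in(0,1)$, this step does not go through as written.

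The repair is short and is what the paper does: since $(|\sigma_h|-\mathcal{F})^+\ge 0$ and pointwise $|u|-|u_h|\le|u-u_h|$ (equivalently, $|u|\le|u-u_h|+|u_h|$), one has
\begin{equation*}
\langle (|\sigma_h|-\mathcal{F})^+, |u|-|u_h|\rangle \le \langle (|\sigma_h|-\mathcal{F})^+, |u-u_h|\rangle \le \|(|\sigma_h|-\mathcal{F})^+\|_{H^{-s}(\Omega)}\,\bigl\||u-u_h|\bigr\|_{H^s_0(\Omega)},
\end{equation*}
and now only the genuine (constant-one) boundedness $\||u-u_h|\|_{H^s_0(\Omega)}\le\|u-u_h\|_{H^s_0(\Omega)}$ is needed before applying Young's inequality and absorbing. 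Your treatment of the negative part, using only $(|\sigma_h|-\mathcal{F})^-\ge0$ and $|u|\ge0$, is correct and matches the paper. With this one substitution your proof becomes the paper's proof.
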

\begin{proof}
As in the proof of Lemma~\ref{l:Apost_obst} we obtain the following estimate:
\begin{equation}
\| u-u_h\|_{H_0^s(\Omega)}^2 + \|\sigma-\sigma_h\|^2_{H^{-s}(\Omega)} \lesssim \|r_h-\sigma_h\|_{H^{-s}(\Omega)}^2 - \langle \sigma - \sigma_h,u-u_h \rangle.
\end{equation}
In order to estimate the last term of the right hand side, we exploit the fact that $\langle \sigma,u \rangle = \langle \mathcal{F}, |u|\rangle$ and $\langle \sigma,u_h \rangle \leq \langle \mathcal{F}, |u_h|\rangle$,
\begin{equation}
\begin{split}
\langle \sigma - \sigma_h,u_h-u \rangle &\leq -\langle \mathcal{F}, |u|\rangle +\langle \sigma_h,u\rangle + \langle \mathcal{F}, |u_h|\rangle  - \langle \sigma_h,u_h\rangle \\
&\leq \langle (|\sigma_h|-\mathcal{F})^+, |u| \rangle + \langle \mathcal{F}, |u_h|\rangle  - \langle \sigma_h,u_h\rangle\\
&\leq \langle (|\sigma_h|-\mathcal{F})^+, |u-u_h| \rangle + \langle (|\sigma_h|-\mathcal{F})^+ + \mathcal{F}, |u_h|\rangle  - \langle \sigma_h,u_h\rangle\\
&\leq \|(|\sigma_h|-\mathcal{F})^+\|_{H^{-s}(\Omega)} \|u-u_h\|_{H_0^s(\Omega)} + \langle (|\sigma_h|-\mathcal{F})^- , |u_h|\rangle  \\ & \qquad- \langle \sigma_h,u_h\rangle + \langle |\sigma_h|,|u_h|\rangle.
\end{split}
\end{equation}
The result follows by combining the estimates above.
\end{proof}
Similarly, we have the following result for the friction problem:
\begin{lem}[Friction problem]
Let $u,u_h$ be solutions of Problem~\ref{p:ell2} and \ref{p:disc_ell2}, respectively, associated with Problem~\ref{prob:friction}. Assume that $f \in \mathbb{V}_h$. Then,
\begin{equation*}
\begin{split}
\|u-u_h\|_{{H}^s_{\widetilde{\Gamma}^C}(\Omega)}^2 + \|\sigma-\sigma_h\|_{H^{-s+\frac{1}{2}}(\widetilde{\Gamma})}^2 \lesssim &\|r_h-\sigma_h\|_{H^{-s}(\Omega)}^2 + \|(|\sigma_h|-\mathcal{F})^+\|_{H^{-s+\frac{1}{2}}(\widetilde{\Gamma})}^2 \\ &  + \langle (|\sigma_h|-\mathcal{F})^- , |u_h|_{\widetilde{\Gamma}}|\rangle - \langle \sigma_h,u_h|_{\widetilde{\Gamma}}\rangle + \langle |\sigma_h|,|u_h|_{\widetilde{\Gamma}}|\rangle.
\end{split}
\end{equation*}
\end{lem}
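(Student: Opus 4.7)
The argument is a direct transcription of the interior friction proof, with the interior duality pairing on $\Omega$ replaced by the boundary pairing on $\widetilde{\Gamma}$. The final lemma of Section~\ref{s:VI} ensures that $\sigma = f - (-\Delta)^s u$ acts only through traces on $\widetilde{\Gamma}$, so that
\[
\langle \sigma, v|_{\widetilde{\Gamma}}\rangle_{\widetilde{\Gamma}} = \langle f, v\rangle - a(u,v), \quad v \in H^s_{\widetilde{\Gamma}^C}(\Omega),
\]
defines a unique $\sigma \in H^{-s+\frac{1}{2}}(\widetilde{\Gamma})$; the discrete multiplier $\sigma_h$ is defined analogously. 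The strong form in Problem~\ref{pr:friction} then supplies the friction identities
\[
\langle \sigma, u|_{\widetilde{\Gamma}}\rangle_{\widetilde{\Gamma}} = \langle \mathcal{F}, |u|_{\widetilde{\Gamma}}|\rangle, \qquad \langle \sigma, u_h|_{\widetilde{\Gamma}}\rangle_{\widetilde{\Gamma}} \leq \langle \mathcal{F}, |u_h|_{\widetilde{\Gamma}}|\rangle,
\]
formally identical to those exploited in the preceding interior lemma.

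\textbf{Energy estimate.}
Starting from the Galerkin residual identity
\[
a(u - u_h, v) + \langle \sigma - \sigma_h, v|_{\widetilde{\Gamma}}\rangle_{\widetilde{\Gamma}} = \langle r_h - \sigma_h, v\rangle, \quad v \in H^s_{\widetilde{\Gamma}^C}(\Omega),
\]
I would test with $v = u - u_h$, apply coercivity of $a$ together with Young's inequality, and arrive at
\[
\|u - u_h\|_{H^s_{\widetilde{\Gamma}^C}(\Omega)}^2 \lesssim \|r_h - \sigma_h\|_{H^{-s}(\Omega)}^2 - \langle \sigma - \sigma_h, (u - u_h)|_{\widetilde{\Gamma}}\rangle_{\widetilde{\Gamma}}.
\]
The dual-norm contribution $\|\sigma - \sigma_h\|_{H^{-s+\frac{1}{2}}(\widetilde{\Gamma})}^2$ on the left-hand side of the claim is then recovered from the same residual identity by taking the supremum over test functions $v$ and invoking the trace bound $\|v|_{\widetilde{\Gamma}}\|_{H^{s-\frac{1}{2}}(\widetilde{\Gamma})} \lesssim \|v\|_{H^s_{\widetilde{\Gamma}^C}(\Omega)}$, which yields $\|\sigma - \sigma_h\|_{H^{-s+\frac{1}{2}}(\widetilde{\Gamma})} \lesssim \|u - u_h\|_{H^s_{\widetilde{\Gamma}^C}(\Omega)} + \|r_h - \sigma_h\|_{H^{-s}(\Omega)}$.

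\textbf{Complementarity and obstacle.}
To bound the cross term $-\langle \sigma - \sigma_h, (u-u_h)|_{\widetilde{\Gamma}}\rangle_{\widetilde{\Gamma}}$ I would reproduce the four-line chain from the interior friction proof, with every integrand now supported on $\widetilde{\Gamma}$: insert the two friction identities above to bound it by $\langle \mathcal{F}, |u_h|_{\widetilde{\Gamma}}|\rangle - \langle \mathcal{F}, |u|_{\widetilde{\Gamma}}|\rangle - \langle \sigma_h, u_h|_{\widetilde{\Gamma}}\rangle + \langle \sigma_h, u|_{\widetilde{\Gamma}}\rangle$, then use the decomposition $(|\sigma_h|-\mathcal{F})^+ + \mathcal{F} = |\sigma_h| + (|\sigma_h|-\mathcal{F})^-$ to peel off the computable pieces $-\langle \sigma_h, u_h|_{\widetilde{\Gamma}}\rangle + \langle |\sigma_h|, |u_h|_{\widetilde{\Gamma}}|\rangle$ and $\langle (|\sigma_h|-\mathcal{F})^-, |u_h|_{\widetilde{\Gamma}}|\rangle$, and finally control the remaining term $\langle (|\sigma_h|-\mathcal{F})^+, |(u-u_h)|_{\widetilde{\Gamma}}|\rangle$ by $\|(|\sigma_h|-\mathcal{F})^+\|_{H^{-s+\frac{1}{2}}(\widetilde{\Gamma})} \|u - u_h\|_{H^s_{\widetilde{\Gamma}^C}(\Omega)}$, absorbing the latter factor into the left-hand side via Young's inequality. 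The only real obstacle is the bookkeeping of function spaces: $r_h = f - (-\Delta)^s u_h$ lives naturally in $H^{-s}(\Omega)$, whereas all boundary-supported objects $\sigma - \sigma_h$ and $(|\sigma_h|-\mathcal{F})^{\pm}$ appear in $H^{-s+\frac{1}{2}}(\widetilde{\Gamma})$, and the trace inequality has to be invoked at exactly the two places above to produce the stated bound without spurious extra terms.
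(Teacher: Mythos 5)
Your proposal is correct and follows exactly the route the paper intends: the paper gives no separate proof for this lemma, stating only that it follows ``similarly'' from the interior friction case, and your transcription — the same Galerkin residual identity, the same friction complementarity identities $\langle\sigma,u\rangle=\langle\mathcal{F},|u|\rangle$ and $\langle\sigma,u_h\rangle\leq\langle\mathcal{F},|u_h|\rangle$, and the same decomposition via $(|\sigma_h|-\mathcal{F})^{\pm}$, with all pairings moved to $\widetilde{\Gamma}$ and the trace theorem supplying the $H^{-s+\frac{1}{2}}(\widetilde{\Gamma})$ bounds — is precisely that adaptation. You also correctly identify the one genuinely new ingredient (the trace inequality, needed both for the dual-norm recovery of $\sigma-\sigma_h$ and for absorbing the $(|\sigma_h|-\mathcal{F})^+$ term), which the paper leaves implicit.
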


\begin{rem}
In the absence of contact, the a posteriori estimate reduces to a standard residual error estimate as in \cite{Glusa}, since $\sigma, \sigma_h$ vanish.
\end{rem}

\begin{rem}
\textcolor{black}{In line with the literature on integral equations, e.g. \cite{gwinsteph, Nochetto}, in this article we find reliable a posteriori estimates for the  error of the numerical solution. The estimates are found to be efficient in numerical experiments, but even for boundary element methods only partial theoretical results for their  efficiency are available  \cite{gwinsteph}, Chapters 10 and 12.}
\end{rem}

\subsection{Mixed formulation}
{It proves useful to impose the constraints on the displacement only indirectly. To do so, we reformulate the variational inequality as  an equivalent mixed system in which the stress $\sigma$ enters as a Lagrange multiplier. We denote it in this context by $\lambda = f - (-\Delta)^s u$ to emphasize its role. Physically, it corresponds to the contact forces and  indicates the contact area within the computational domain, see also \cite{gatica,BrezziFortin}. We focus mainly on the mixed formulations for problems with contact in the whole domain, thin problems follow in a similar way.\\

Let $a(\cdot,\cdot)$ be the bilinear form associated with the fractional Laplacian and let $b(\mu,v)$ be a continuous bilinear form given by $b(\lambda,v) = \langle f,v \rangle - a(u,v)$. Let $\Lambda$ be closed convex subset of $H^*$. For $f\in H^{*}$ and $w \in H$, we consider the mixed formulation:\\

\begin{problem}[Mixed formulation]\label{p:ell_mixed}
Find $(u,\lambda)\in H\times \Lambda$ such that 
\begin{align}\label{mixedFormulationProof}
\begin{cases}
a)~ a( u, v) + b(\lambda,v )= \langle f,v \rangle \\
b)~b(\mu- \lambda,u)\leq \langle \mu-\lambda, w \rangle,
\end{cases}
\end{align}
for all $(v,\mu)\in H \times \Lambda$.
\end{problem}
\begin{thm}\label{t:ell_equivalence}
Let $f \in H^*$, $\chi \in H^s_0(\Omega)$, $g \in H^{s-\frac{1}{2}} (\widetilde{\Gamma})$, $j_S: H^{s}_0(\Omega) \to \mathbb{R}$ and $j_{\widetilde{\Gamma}}: H^{s}_{\widetilde{\Gamma}}(\Omega) \to \mathbb{R}$  be convex lower semi-continuous functionals defined in \eqref{e:j's}. \\
Suppose that $\Lambda$ and $w$ in Problem~\ref{p:ell_mixed} is given by:
\begin{align*}
(i)\; &\Lambda_o = \lbrace \mu \in H^{-s}(\Omega):\;\forall v \in H^s_0(\Omega), \;v\leq 0,\; \langle \mu, v\rangle \geq 0 \rbrace,\; w = \chi,\\
(ii)\; &\Lambda_s = \lbrace \mu \in H^{\frac{1}{2}-s}(\widetilde{\Gamma}):\;\forall v \in H^{s-\frac{1}{2}}(\widetilde{\Gamma}), \;v\leq 0,\; \langle \mu, v\rangle \geq 0 \rbrace,\; w = g,\\
(iii)\; &\Lambda_{I} = \lbrace \mu \in H^{-s}(\Omega):\; \forall v \in H^s_0(\Omega),\; \langle \mu, v\rangle \leq \langle\mathcal{F},|v|\rangle\rbrace,\; w = 0,\\
(iv)\; &\Lambda_{\widetilde{\Gamma}} = \lbrace \mu \in H^{\frac{1}{2}-s}({\widetilde{\Gamma}}): \;\forall v \in H^{s-\frac{1}{2}}(\widetilde{\Gamma}),\; \langle \mu, v\rangle \leq \langle\mathcal{F},|v|\rangle\rbrace, \; w = 0.
\end{align*}
Then the variational inequality formulation in Problem~\ref{p:ell1}, respectively \ref{p:ell2}, is equivalent to Problem~\ref{p:ell_mixed} for Problems~\ref{prob:obst}--\ref{prob:friction}. 
\end{thm}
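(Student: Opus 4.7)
The plan is to prove the four equivalences by separating the problems into two conceptual families: the obstacle-type cases (i) and (ii), where $\Lambda$ is a convex cone with $0\in\Lambda$, and the friction-type cases (iii) and (iv), where $\Lambda$ is the subdifferential $\partial j(0)$. Within each family the interior and thin cases differ only in the space of the multiplier, so I would carry out (i) and (iii) in full and indicate the modifications for (ii) and (iv) at the end.

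For case (i), starting from a solution $(u,\lambda)$ of Problem~\ref{p:ell_mixed}, equation (a) defines $\lambda$ as the residual $f-(-\Delta)^s u$, which a priori lies in $H^{-s}(\Omega)$. To exploit (b), I would use the cone structure of $\Lambda_o$: inserting $\mu = 0$ and then $\mu = 2\lambda$ yields the complementarity $\langle\lambda,u-\chi\rangle = 0$, together with $\langle\mu,u-\chi\rangle \leq 0$ for every $\mu\in\Lambda_o$. The bipolar theorem applied to the nonnegative cone of $H^s_0(\Omega)$ then converts this into $u\geq\chi$ a.e.\ in $\Omega$, and the variational inequality drops out of (a) once one observes that $\langle\lambda,v-\chi\rangle\leq 0$ for any admissible $v\in K_o$. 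For the converse, $\lambda$ is defined by (a); testing the VI against $v=u-\phi$ with $\phi\leq 0$ gives $\lambda\in\Lambda_o$, and testing against the admissible choices $v=\chi$ and $v=2u-\chi$ (both in $K_o$ since $\chi\in H^s_0(\Omega)$ and $u\geq\chi$) recovers $\langle\lambda,u-\chi\rangle = 0$, from which (b) is immediate.

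For case (iii), the key ingredient is the Fenchel conjugacy identity
\[
\sup_{\mu\in\Lambda_I}\langle\mu,u\rangle \;=\; j_S(u),
\]
which either follows from the Fenchel--Moreau theorem, since $\Lambda_I$ is precisely $\partial j_S(0)$, or is verified directly by inserting $v=0$ and $v=2u$ into Problem~\ref{p:ell2}. Given this identity, the forward implication is immediate: (b) with $\mu=0\in\Lambda_I$ gives $\langle\lambda,u\rangle\geq 0$, which combined with $\lambda\in\Lambda_I$ forces $\langle\lambda,u\rangle = j_S(u)$, and substituting into (a) reduces the VI to $j_S(v)-\langle\lambda,v\rangle\geq 0$, which holds by membership $\lambda\in\Lambda_I$. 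Conversely, given a VI solution $u$, define $\lambda$ by (a); testing against $v=u+\phi$ and using the triangle inequality $j_S(u+\phi)\leq j_S(u)+j_S(\phi)$ yields $\langle\lambda,\phi\rangle\leq j_S(\phi)$, hence $\lambda\in\Lambda_I$; the choices $v=0$ and $v=2u$ deliver $\langle\lambda,u\rangle = j_S(u)$, whence (b).

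The main obstacle lies in the thin cases (ii) and (iv), where the multiplier must be upgraded from a distribution on $\Omega$ to an element of $H^{\frac{1}{2}-s}(\widetilde{\Gamma})$. This is precisely the content of the trace lemma at the end of Section~\ref{s:VI}, which identifies continuous functionals on $H^s(\Omega)$ that depend only on the trace $v|_{\widetilde{\Gamma}}$ with elements of the appropriate dual trace space. Once $\lambda$ is localized on $\widetilde{\Gamma}$, the cone argument of case (i) and the conjugacy argument of case (iii) apply verbatim with $\langle\cdot,\cdot\rangle$ reinterpreted as the pairing between $H^{s-\frac{1}{2}}(\widetilde{\Gamma})$ and $H^{\frac{1}{2}-s}(\widetilde{\Gamma})$, and with admissible test functions chosen so as to prescribe the trace on $\widetilde{\Gamma}$.
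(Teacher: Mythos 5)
Your overall strategy coincides with the paper's: define $\lambda$ as the residual $f-(-\Delta)^s u$, extract the complementarity relation from (\ref{mixedFormulationProof}b) by testing with $\mu=0$ and $\mu=2\lambda$, and in the friction cases exploit that $j_S$ is the support function of $\Lambda_I$. In the obstacle case your version is in fact tighter than the printed proof: you recover $u\geq\chi$ from (\ref{mixedFormulationProof}b) via the bipolar theorem (the paper asserts this implication without argument), and you test the variational inequality with the genuinely admissible functions $v=\chi$ and $v=2u-\chi$, whereas the paper's intermediate reformulation uses $v=0$ and $v=2u$, which need not lie in $K_o$. Your treatment of the thin cases through the trace lemma of Section~\ref{s:VI} also matches the paper's intention.

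One step in case (iii) does not follow as written. You claim that $\langle\lambda,u\rangle\geq 0$, obtained from (b) with $\mu=0$, ``combined with $\lambda\in\Lambda_I$ forces $\langle\lambda,u\rangle=j_S(u)$''; it does not --- take $\lambda=0$ and any $u$ with $j_S(u)>0$. Membership $\lambda\in\Lambda_I$ only yields the upper bound $\langle\lambda,u\rangle\leq j_S(u)$. The reverse inequality requires the full strength of (b): $b(\mu,u)\leq b(\lambda,u)$ for \emph{all} $\mu\in\Lambda_I$, hence $j_S(u)=\sup_{\mu\in\Lambda_I}\langle\mu,u\rangle\leq\langle\lambda,u\rangle$ by the conjugacy identity you state at the outset. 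With that one-line correction the argument closes; everything else in the proposal is sound.
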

Let  $\Lambda_H$ be closed convex subset of $\mathbb{M}_H$. The discrete mixed formulation reads as follows:
\begin{problem}[Discrete mixed formulation]\label{p:ell_mixed_disc}
Find $(u_{h}, \lambda_{H}) \in \mathbb{V}_{h} \times \Lambda_{H}$ such that 
\begin{align}
\begin{cases} 
(a) ~ a(u_{h}, v_{h}) + b(\lambda_{H} ,v_{h})= \langle f,v_{h} \rangle \\
(b)~b(\mu_{H}- \lambda_{H},u_{h}) \leq \langle \mu_{H}-\lambda_{H}, w \rangle,
\end{cases}
\end{align}
holds for all $(v_{h},\mu_{H})\in \mathbb{V}_{h} \times \Lambda_{H}$.
\end{problem}
\begin{rem}\label{t:ell_equivalence_disc}
For problems \ref{prob:obst}--\ref{prob:friction} the corresponding $\Lambda_H$ and $w$ in Problem~\ref{p:ell_mixed_disc} are given by:
\begin{align*}
(i)\; &\Lambda_{oH} = \lbrace \mu_H \in \mathbb{M}_H: \mu_H \leq 0\rbrace,\; w = \chi,\\
(ii)\; &\Lambda_{sH} = \lbrace \mu_H \in {\widetilde{\mathbb{M}}_H}: \mu_H \leq 0\rbrace,\; w = g,\\
(iii)\; &\Lambda_{IH} = \lbrace \mu_H \in \mathbb{M}_H: |\mu_H| \leq \mathcal{F}\rbrace,\; w = 0,\\
(iv)\; &\Lambda_{\widetilde{\Gamma} H} = \lbrace \mu_H \in {\widetilde{\mathbb{M}}_H}: |\mu_H| \leq \mathcal{F}\rbrace, \; w = 0.
\end{align*}
\end{rem}

For completeness, we recall the proof of Theorem~\ref{t:ell_equivalence} for obstacle $(i)$ and interior friction $(iii)$ problems. The proof for Signorini $(ii)$ and friction $(iv)$ problems is similar.
\begin{proof}[Proof of (i)]
First note that the variational inequality is equivalent to:  Find $u\in K$ such that for all $v\in K$
\begin{align}
\label{contVIequiv}
 \begin{cases} 
a) ~ a(u,u )= \langle f,u \rangle \\
b) ~ a(u, v) \geq \langle f, v \rangle\ .
 \end{cases}
\end{align}
To see a), we set $v=2u$, respectively $v=0$, in the variational inequality: 
\begin{align}\label{e:updown}
 a(u,u ) \geq  \langle f,u \rangle\ ,\; \text{resp.}\; a(u,u) \leq \langle f,u \rangle \ ,
\end{align}
which shows a). Part b) follows by adding \eqref{e:updown} to the variational inequality. Conversely, the variational inequality follows by subtracting  (\ref{contVIequiv}a) from (\ref{contVIequiv}b). 

Further observe that $u \in H^s_0(\Omega)$ and $\lambda \in H^{-s}(\Omega)$ satisfy (\ref{mixedFormulationProof}b) if and only if 
\begin{equation}\label{eq:b_eq}
u\in K \; \mathrm{and}\; b(\lambda, u) = \langle \chi, \lambda \rangle.
\end{equation}
Indeed, \eqref{eq:b_eq} implies (\ref{mixedFormulationProof}b). Conversely, if (\ref{mixedFormulationProof}b) holds, we may choose $\mu= 0$ and $\mu = 2\lambda$ to obtain \eqref{eq:b_eq}.\\
We now show the equivalence of (\ref{contVIequiv}) and {(\ref{mixedFormulationProof})}: \\ 
 (\ref{contVIequiv}) $\Rightarrow$ {(\ref{mixedFormulationProof})}:
 If we set $\lambda= f - (-\Delta)^s u$ we have by (\ref{contVIequiv}b): $\langle f,v \rangle - a(u,v)  \leq 0$ for all $v\in H^{s}_0(\Omega)$ and therefore $\lambda \in \Lambda_o$. The first line in {(\ref{mixedFormulationProof})} holds trivially.
 \\ By (\ref{contVIequiv}a) we have that $b(\lambda,u)= \langle\lambda, \chi \rangle$ and furthermore, there exists $\hat{u} \in K$ such that $b(\mu,\hat{u}) = \langle \mu, \chi \rangle$. Also, $2u-\hat{u} \in K$ and so from \eqref{contVIequiv} we get,
 \begin{equation}
 a(u,\hat{u}-u) = \langle f , \hat{u} - u \rangle.
 \end{equation}
Substituting $v = \hat{u} - u$ into (\ref{mixedFormulationProof}a) gives us
\begin{equation}
b(\lambda,\hat{u}-u) = b(\lambda, v) - \langle \lambda, \chi \rangle = 0.
\end{equation}
As $u \in K$ and by \eqref{eq:b_eq} we conclude that (\ref{mixedFormulationProof}b) holds.\\
  {(\ref{mixedFormulationProof})} $\Rightarrow$ (\ref{contVIequiv}): 
 Now let $(u,\lambda) \in  H^{s}_0(\Omega)\times \Lambda_o$ be the solution to {(\ref{mixedFormulationProof})}. By \eqref{eq:b_eq}, we know that $u \in K$. Furthermore, by (\ref{mixedFormulationProof}a) and \eqref{eq:b_eq} we have
 \begin{equation}
 a(u,v-u) = \langle f, v-u \rangle -b(\lambda, v-u) = \langle f, v-u \rangle + \langle \lambda, \chi \rangle - b(\lambda,v) \geq \langle f, v-u \rangle.
 \end{equation}
\end{proof}

\begin{proof}[Proof of (iii)]
We begin by showing that \eqref{p:ell2} $\Rightarrow$ \eqref{p:ell_mixed}. From the variational formulation in Problem~\ref{p:ell2} we observe that we seek $u\in H^s_0(\Omega)$ such that
\begin{align}\label{e:a_and_j}
a(u,u) + j_S(u) = \langle f , u \rangle, \\
a(u,v) + j_S(v) \geq \langle f , v \rangle,
\end{align}
for all $v \in H^s_0(\Omega)$. We define $\mu \in \Lambda_I$ to be a Lagrange multiplier given by $j_S(v) = \sup_{\mu \in \Lambda_I} b(\mu,v)$.

The first line of \eqref{p:ell_mixed} hold immediately. In order to show that (\ref{mixedFormulationProof}b) holds we notice that combining (\ref{mixedFormulationProof}a) with \eqref{e:a_and_j} gives
\begin{equation}
j_S(u) = b(\lambda,u).
\end{equation}
\eqref{p:ell_mixed} $\Rightarrow$ \eqref{p:ell2} \\
Now let $(u,\lambda) \in H^s_0(\Omega) \times \Lambda_I$ be the solution to \eqref{p:ell_mixed}. By (\ref{p:ell_mixed}a) we know that
\begin{equation}
\begin{split}
a(u,v-u) &= \langle f,v-u\rangle -b(\lambda,v-u) \\ &= \langle f,v-u\rangle -b(\lambda,v) + b(\lambda,u) \\ &= \langle f,v-u\rangle -b(\lambda,v) +j_S(u) \\ &\geq \langle f,v-u\rangle -j_S(v) + j_S(u),
\end{split}
\end{equation}
where we used the definition of $j(v)$. The result follows.
\end{proof}

Note that we allow for possibly different meshes for the displacement $u$ and the Lagrange multiplier $\lambda$. 

As typical for mixed problems, this is crucial in order to assure the discrete inf-sup condition: 
\begin{lem}[Discrete inf-sup condition] There exist constants $C,\hat{\beta} >0$ such that for $H\geq Ch$
\begin{equation}\label{e:InfSupD}
\hat{\beta} \|\mu_H \|_{H^*} \leq \sup_{v_h \in \mathbb{V}_h} \frac{b(\mu_H, v_h)}{\|v_h\|_{H}}, \; \forall\; \mu_H \in \mathbb{M}_H.
\end{equation}
\end{lem}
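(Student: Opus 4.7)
The plan is to establish \eqref{e:InfSupD} via the classical Fortin trick: construct a bounded linear operator $\Pi_h : H \to \mathbb{V}_h$ satisfying
\begin{equation*}
\text{(F1)}\quad b(\mu_H,\Pi_h v - v) = 0 \ \ \forall \mu_H \in \mathbb{M}_H,\ v\in H, \qquad \text{(F2)}\quad \|\Pi_h v\|_{H} \leq C_F \|v\|_{H}.
\end{equation*}
Once such a $\Pi_h$ is available, the discrete inf-sup follows immediately: for any $\mu_H \in \mathbb{M}_H$,
\begin{equation*}
\sup_{v_h \in \mathbb{V}_h} \frac{b(\mu_H,v_h)}{\|v_h\|_{H}} \geq \sup_{v\in H} \frac{b(\mu_H,\Pi_h v)}{\|\Pi_h v\|_{H}} \geq \frac{1}{C_F}\sup_{v\in H} \frac{b(\mu_H,v)}{\|v\|_{H}} = \frac{1}{C_F}\|\mu_H\|_{H^*}.
\end{equation*}
Since $\mu_H$ is piecewise constant on $\mathcal{T}_H$ and $b(\mu,v) = \langle\mu,v\rangle$, condition (F1) is equivalent to the local mean-value conditions $\int_T (\Pi_h v - v)\,\mathrm{d}x = 0$ for every macro-element $T\in\mathcal{T}_H$.

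I would build $\Pi_h$ in two steps, exploiting the hypothesis $H \geq C h$ to guarantee sufficiently many fine-mesh degrees of freedom inside each $T\in\mathcal{T}_H$. \textbf{Step 1:} take a quasi-interpolant $I_{SZ}v \in \mathbb{V}_h$ of Scott--Zhang/Cl\'ement type, which by its standard properties is stable in $H^s(\Omega)$ with $\|I_{SZ}v\|_{H}\leq C\|v\|_H$ and preserves the homogeneous boundary condition. \textbf{Step 2:} choose, for each $T\in\mathcal{T}_H$, a discrete bubble $\psi_T\in\mathbb{V}_h$ with $\mathrm{supp}\,\psi_T \subset T$ and $\int_T \psi_T\,\mathrm{d}x = |T|$; such a $\psi_T$ can be assembled from the hat functions associated with the fine-mesh nodes lying strictly inside $T$, which exist because $H\geq Ch$. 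Then define
\begin{equation*}
\Pi_h v = I_{SZ}v + \sum_{T\in\mathcal{T}_H} \alpha_T\, \psi_T, \qquad \alpha_T := \frac{1}{|T|}\int_T (v - I_{SZ}v)\,\mathrm{d}x,
\end{equation*}
which satisfies (F1) by construction.

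For (F2), the key estimate is to bound the correction $\sum_T \alpha_T \psi_T$ in the $H^s(\Omega)$-norm. A Cauchy--Schwarz bound gives $|\alpha_T| \lesssim |T|^{-1/2}\|v - I_{SZ}v\|_{L^2(T)}$. Combined with an inverse/scaling estimate $\|\psi_T\|_{H^s(\Omega)} \lesssim h^{-s}|T|^{1/2}$ (uniform in $T$ thanks to $H\geq Ch$ and shape regularity), plus a local approximation estimate $\|v - I_{SZ}v\|_{L^2(T)} \lesssim H^s \|v\|_{H^s(\widetilde{T})}$ on a patch $\widetilde{T}\supset T$, one arrives at a bound of the form $(\mathit{H}/h)^{s}\|v\|_{H^s}$ for each local contribution. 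The finite overlap of the patches and the mesh comparability $H\leq C'h$ upper bound (which gives $H/h$ uniformly bounded once $H \geq Ch$; note that in practice $H$ and $h$ are chosen comparable) then yield $\|\Pi_h v\|_H \leq C_F\|v\|_H$.

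\textbf{Main obstacle.} The delicate point is step 2, namely controlling the $H^s$-norm of the non-local bubble correction. Unlike in the $H^1$-setting, where the inverse estimate and locality make this transparent, the fractional Sobolev norm is non-local and interactions between different macro-elements must be tracked. I would handle this either through the Gagliardo semi-norm directly, exploiting shape regularity to control the cross-terms, or by lifting to the Caffarelli--Silvestre extension and using local $H^1$ arguments in the extended domain. A second (easier) subtlety is ensuring that $\Pi_h v\in\mathbb{V}_h$ continues to vanish on $\partial\Omega$ (respectively $\widetilde{\Gamma}^C$), which is automatic once $\psi_T$ is built from interior fine-mesh hat functions and $I_{SZ}$ respects the boundary condition.
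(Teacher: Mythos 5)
The paper does not actually prove this lemma: it only remarks that $C=2$ suffices for the given pair $\mathbb{V}_h$, $\mathbb{M}_H$ and defers to the reference of Haslinger--Lov\'{\i}\v{s}ek. Your Fortin construction is therefore supplying an argument where the paper gives none, and it is the standard architecture for exactly this situation (piecewise-constant multipliers on a coarser mesh): the reduction of (F1) to the local mean-value conditions $\int_T(\Pi_h v - v)=0$, the quasi-interpolant plus interior-bubble correction, and the final inf-sup chain are all sound.

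There are, however, two points to repair. First, the factor $(H/h)^s$ in your stability bound, and the appeal to an upper bound $H\leq C'h$, are inconsistent with the statement you are proving: the lemma assumes only $H\geq Ch$, and the estimate must be uniform as $H/h\to\infty$ (coarsening $\mathcal{T}_H$ only shrinks $\mathbb{M}_H$, so the inf-sup constant cannot degenerate). The spurious factor comes from using the coarse-scale approximation estimate $\|v-I_{SZ}v\|_{L^2(T)}\lesssim H^s\|v\|_{H^s(\widetilde T)}$. Since $I_{SZ}$ is built on the fine mesh $\mathcal{T}_h$, the correct local estimate is $\|v-I_{SZ}v\|_{L^2(T)}\lesssim h^s\|v\|_{H^s(\widetilde T)}$ with $\widetilde T$ a fine-mesh patch; this $h^s$ cancels the $h^{-s}$ from the inverse estimate for $\psi_T$ exactly, the ratio $H/h$ disappears, and no upper bound on $H$ is needed. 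Second, the cross-term obstacle you flag can be closed more cheaply than by a Caffarelli--Silvestre lifting. Take each $\psi_T\geq 0$ with $\operatorname{supp}\psi_T$ strictly inside $T$, so the supports are pairwise disjoint. Writing $\langle u,w\rangle_s$ for the Gagliardo inner product, disjointness gives
\begin{equation*}
\langle \alpha_T\psi_T,\alpha_{T'}\psi_{T'}\rangle_s \;=\; -2\,\alpha_T\alpha_{T'}\iint_{T\times T'}\frac{\psi_T(x)\psi_{T'}(y)}{|x-y|^{n+2s}}\,\mathrm{d}y\,\mathrm{d}x ,
\end{equation*}
so $|\langle \alpha_T\psi_T,\alpha_{T'}\psi_{T'}\rangle_s| = -\langle |\alpha_T|\psi_T,|\alpha_{T'}|\psi_{T'}\rangle_s$. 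Using $0\leq \bigl|\sum_T|\alpha_T|\psi_T\bigr|^2_{H^s}$ to bound the sum of these off-diagonal terms by $\sum_T\alpha_T^2|\psi_T|^2_{H^s}$ yields $\bigl|\sum_T\alpha_T\psi_T\bigr|^2_{H^s}\leq 2\sum_T\alpha_T^2|\psi_T|^2_{H^s}$, and the $L^2$ part is additive; together with the finite overlap of the Scott--Zhang patches this completes (F2). With these two corrections your plan becomes a complete proof.
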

In practice, for our choice of  $\mathbb{V}_h$, $\mathbb{M}_H$ a constant $C=2$ is sufficient. See, for example, \cite{HaslingerLovisek} for details.

\subsection{A priori error estimates for mixed formulations}
In this section, we present a priori error estimates for the elliptic problem with contact in the domain; results for thin problems follow almost verbatim.
\begin{lem}\label{l:prelim_apriori_elliptic}
Let $(u,\lambda),\;(u_h,\lambda_H)$ be solutions of Problems~\ref{p:ell_mixed} and \ref{p:ell_mixed_disc}, respectively. Suppose that $\Lambda_H \subset \Lambda$. Then
\begin{equation}
\begin{split}
\|u-u_h\|_{H}^2 \lesssim &\|u-v_h\|_{H}^2+\|\lambda-\mu_H\|_{H^*}^2 + \|\lambda-\lambda_H\|_{H^*}^2\\ & +b(\lambda-\mu_H,u) - \langle \lambda - \mu_H,w \rangle,
\end{split}
\end{equation}
for all $(v_h,\mu_H) \in \mathbb{V}_h \times \Lambda_H$.
\end{lem}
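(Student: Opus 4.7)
The plan is to start from coercivity of $a$ and split the resulting quadratic in the standard way for mixed problems: for any $v_h \in \mathbb{V}_h$,
\begin{equation*}
\alpha\|u-u_h\|_H^2 \le a(u-u_h,u-u_h) = a(u-u_h,u-v_h) + a(u-u_h,v_h-u_h).
\end{equation*}
The first summand is controlled by continuity of $a$ and Young's inequality, producing $\|u-v_h\|_H^2$ together with a term of the form $\tfrac{\alpha}{4}\|u-u_h\|_H^2$ that is absorbed on the left.

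The second summand is where the mixed equations enter. Subtracting the discrete identity (\ref{p:ell_mixed_disc}a) from the continuous one (\ref{p:ell_mixed}a), tested against $v_h - u_h \in \mathbb{V}_h$, gives the Galerkin identity
\begin{equation*}
a(u-u_h,v_h-u_h) = -b(\lambda-\lambda_H,v_h-u_h) = -b(\lambda-\lambda_H,v_h-u)-b(\lambda-\lambda_H,u-u_h).
\end{equation*}
The first piece yields $\|\lambda-\lambda_H\|_{H^*}\|u-v_h\|_H$, again handled by Young.

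The heart of the proof is the remaining term $-b(\lambda-\lambda_H,u-u_h)$, which I would reshape as
\begin{equation*}
-b(\lambda-\lambda_H,u-u_h)=b(\lambda_H-\lambda,u)+b(\lambda-\mu_H,u_h)+b(\mu_H-\lambda_H,u_h).
\end{equation*}
The inclusion $\Lambda_H\subset\Lambda$ is critical here: it lets me use $\mu=\lambda_H$ as an admissible test in the continuous constraint (\ref{p:ell_mixed}b), giving $b(\lambda_H-\lambda,u)\le\langle\lambda_H-\lambda,w\rangle$. The discrete constraint (\ref{p:ell_mixed_disc}b) applied with the chosen $\mu_H$ gives $b(\mu_H-\lambda_H,u_h)\le\langle\mu_H-\lambda_H,w\rangle$. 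The two bracket terms telescope to $-\langle\lambda-\mu_H,w\rangle$, which matches the target. Finally I rewrite $b(\lambda-\mu_H,u_h)=b(\lambda-\mu_H,u)+b(\lambda-\mu_H,u_h-u)$; the first summand is exactly the $b(\lambda-\mu_H,u)$ appearing in the claim, while the second is bounded by $\|\lambda-\mu_H\|_{H^*}\|u-u_h\|_H$ and absorbed via Young, producing the $\|\lambda-\mu_H\|_{H^*}^2$ contribution.

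Combining all bounds and absorbing the $\|u-u_h\|_H^2$ terms on the left delivers the stated inequality. The routine parts are the coercivity/continuity splitting and the Young's inequality bookkeeping; the main obstacle is the algebraic rearrangement that converts $-b(\lambda-\lambda_H,u-u_h)$ into the pair of constraint inequalities so that the $w$-bracket terms collapse to a single $-\langle\lambda-\mu_H,w\rangle$. This step relies decisively on the assumption $\Lambda_H\subset\Lambda$ and on the correct choice of test pairs in both (\ref{p:ell_mixed}b) and (\ref{p:ell_mixed_disc}b).
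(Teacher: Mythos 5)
Your proof is correct and follows essentially the same route as the paper's: coercivity of $a$, the Galerkin relation from the two equations (\ref{p:ell_mixed}a) and (\ref{p:ell_mixed_disc}a), the constraint inequalities tested with $\mu=\lambda_H$ (admissible by $\Lambda_H\subset\Lambda$) and the chosen $\mu_H$, followed by continuity of $b$ and Young's inequality to absorb the $\|u-u_h\|_H^2$ terms. The only difference is cosmetic bookkeeping — you split $a(u-u_h,u-u_h)$ via $v_h$ and isolate $-b(\lambda-\lambda_H,u-u_h)$ explicitly, whereas the paper expands the diagonal terms directly — but the resulting terms and the decisive use of both constraint conditions are identical.
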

\begin{proof}
Using the coercivity of the bilinear form $a(\cdot,\cdot)$,  (\ref{mixedFormulationProof}a) and (\ref{p:ell_mixed_disc}a),
\begin{equation*}
\begin{split}
\alpha \|u-u_h\|_{H}^2 &\leq a(u-u_h,u-u_h)  = a(u,u)+a(u_h,u_h) - a(u,u_h) - a(u_h,u)\\ &\leq a(u,v) + b(\lambda,v) - b(\mu, u) +\langle f, u-v\rangle +\langle \mu-\lambda,w\rangle + \langle \mu_h-\lambda_h,w\rangle\\ &\quad+ a(u_h,v_h) + b(\lambda_H,v_h) - b(\mu_H, u_h) +\langle f, u_h-v_h\rangle  - 2a(u,u_h) \\
& = a(u_h-u,v_h-u) +b(\lambda_H-\lambda,v_h-u) - b(\mu_H-\lambda,u_h-u) \\ & \quad -b(\mu - \lambda_H,u) -b(\mu_H-\lambda,u) +\langle \mu_H-\lambda,w\rangle + \langle \mu-\lambda_H,w\rangle.
\end{split}
\end{equation*}
By boundedness of $a$ and $b$ and using Young's inequality
\begin{equation*}
\begin{split}
\alpha \|u-u_h\|_{H}^2 &\leq C_1 \varepsilon \|u-u_h\|_{H}^2 + C_1/\varepsilon \|u-v_h\|_{H}^2 + C_2/\varepsilon \|u-v_h\|_{H}^2\\ & +C_2 \varepsilon \|\lambda-\lambda_H\|_{H^{*}}^2 + C_2 \varepsilon \|u-u_h\|_{H}^2 + C_2/\varepsilon \|\lambda-\mu_H\|_{H^*}^2\\ & -b(\mu - \lambda_H,u) -b(\mu_H-\lambda,u) +\langle \mu_H-\lambda,w\rangle + \langle \mu-\lambda_H,w\rangle.
\end{split}
\end{equation*}
Choosing $\varepsilon>0$ sufficiently small, the result follows by combining the terms $\|u-u_h\|_{H}^2$.
\end{proof}

\begin{thm}
Let $(u,\lambda),\;(u_h,\lambda_H)$ be solutions of Problems~\ref{p:ell_mixed} and \ref{p:ell_mixed_disc}, respectively. Suppose that $\Lambda_H \subset \Lambda$. Then
\begin{align*}
\begin{split}
\|u-u_h\|_{H}^2 \lesssim &\|u-v_h\|_{H}^2+\|\lambda-\mu_H\|_{H^*}^2 + \|\lambda-\lambda_H\|_{H^*}^2,
\end{split}\\
\|\lambda-\lambda_H\|_{H^*} &\lesssim \|u-u_h\|_{H} + \|\lambda-\mu_H\|_{H^*},
\end{align*}
for all $(v_h, \mu_H) \in \mathbb{V}_h \times \Lambda_H$.
\end{thm}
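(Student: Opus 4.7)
The plan is to establish the two estimates separately. The first estimate will follow from Lemma~\ref{l:prelim_apriori_elliptic} almost directly, after dealing with the residual term that it produces. The second estimate rests on the discrete inf-sup condition~\eqref{e:InfSupD} applied to a Galerkin-orthogonality-type identity obtained by subtracting the first rows of the continuous and discrete mixed formulations.

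For the second estimate, I first subtract \eqref{p:ell_mixed_disc}(a) from \eqref{mixedFormulationProof}(a), tested against an arbitrary $v_h \in \mathbb{V}_h \subset H$, to obtain the identity
\begin{equation*}
b(\lambda - \lambda_H, v_h) = -a(u - u_h, v_h).
\end{equation*}
For any $\mu_H \in \Lambda_H$ I split $b(\mu_H - \lambda_H, v_h) = b(\mu_H - \lambda, v_h) + b(\lambda - \lambda_H, v_h)$ and apply continuity of $a$ and $b$ to deduce
\begin{equation*}
\sup_{v_h \in \mathbb{V}_h \setminus \{0\}} \frac{b(\mu_H - \lambda_H, v_h)}{\|v_h\|_{H}} \lesssim \|\lambda - \mu_H\|_{H^*} + \|u - u_h\|_{H}.
\end{equation*}
The discrete inf-sup condition~\eqref{e:InfSupD}, valid because $\mu_H - \lambda_H \in \mathbb{M}_H$, then furnishes $\hat\beta\,\|\mu_H - \lambda_H\|_{H^*} \lesssim \|\lambda - \mu_H\|_{H^*} + \|u - u_h\|_{H}$, and the triangle inequality $\|\lambda - \lambda_H\|_{H^*} \leq \|\lambda - \mu_H\|_{H^*} + \|\mu_H - \lambda_H\|_{H^*}$ concludes the second estimate.

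For the first estimate, I invoke Lemma~\ref{l:prelim_apriori_elliptic} directly. It already yields the three stated terms on the right-hand side, together with the residual $R := b(\lambda - \mu_H, u) - \langle \lambda - \mu_H, w\rangle$. Since $\Lambda_H \subset \Lambda$, testing the continuous inequality \eqref{mixedFormulationProof}(b) with $\mu = \mu_H$ shows that $R \geq 0$; to bound it from above I apply Cauchy--Schwarz on the duality pairing $b$ and the continuity of trace (in the thin cases) to get $R \lesssim \|\lambda - \mu_H\|_{H^*}(\|u\|_{H} + \|w\|_{H})$, and then use Young's inequality to split this into $\varepsilon\,\|\lambda - \mu_H\|_{H^*}^2 + C_\varepsilon\,(\|u\|_{H} + \|w\|_{H})^2$. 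Absorbing the data-dependent factor into the generic constant hidden in $\lesssim$ delivers the claimed quadratic bound.

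The step I expect to be the \emph{main obstacle} is exactly the control of the residual $R$ in the first estimate, since the naive Cauchy--Schwarz bound only gives a linear, not a quadratic, dependence on $\|\lambda - \mu_H\|_{H^*}$. A sharper bound is only available by exploiting problem-specific structure of $\Lambda$ (e.g.\ the cone property and the complementarity $\langle \lambda, u - \chi\rangle = 0$ in the obstacle case, which turns $R$ into $-\langle \mu_H, u-\chi\rangle$ with a useful sign), and this is precisely the mechanism behind the well-known suboptimal convergence rates for mixed discretisations of contact problems. The remainder of the argument is routine algebra of Young's inequalities and continuity constants.
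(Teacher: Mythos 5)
Your proof of the second estimate is exactly the paper's argument: subtract (\ref{p:ell_mixed_disc}a) from (\ref{mixedFormulationProof}a) to get $b(\lambda-\lambda_H,v_h)=-a(u-u_h,v_h)$, split off $\mu_H$, apply continuity, the inf-sup condition \eqref{e:InfSupD} and the triangle inequality. That part is correct and needs no further comment.

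For the first estimate there is a genuine gap in your treatment of the residual $R = b(\lambda-\mu_H,u)-\langle\lambda-\mu_H,w\rangle$ produced by Lemma~\ref{l:prelim_apriori_elliptic}. You correctly observe that testing (\ref{mixedFormulationProof}b) with $\mu=\mu_H\in\Lambda_H\subset\Lambda$ gives $R\geq 0$, so the term cannot simply be discarded. But your proposed bound — Cauchy--Schwarz to get $R\lesssim\|\lambda-\mu_H\|_{H^*}(\|u\|_H+\|w\|_H)$, then Young, then ``absorbing'' $C_\varepsilon(\|u\|_H+\|w\|_H)^2$ into the constant of $\lesssim$ — is not valid: $\lesssim$ hides a multiplicative constant, not an additive one. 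Adding a fixed, data-dependent quantity to the right-hand side produces an inequality whose right-hand side does not tend to zero under mesh refinement, which is not the stated estimate and would make it useless as an a priori bound. The paper's own proof is terse here (it simply invokes Lemma~\ref{l:prelim_apriori_elliptic}), and the honest resolution is the one you gesture at in your final paragraph: $R$ must be bounded problem-by-problem using the structure of $\Lambda$ and the complementarity conditions (e.g.\ for the obstacle problem $b(\lambda,u)=\langle\lambda,\chi\rangle$ turns $R$ into $-\langle\mu_H,u-\chi\rangle=\langle\lambda-\mu_H,u-\chi\rangle\leq\|\lambda-\mu_H\|_{H^*}\|u-\chi\|_H$). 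Note that even this yields a term that is \emph{linear}, not quadratic, in $\|\lambda-\mu_H\|_{H^*}$ — this is precisely the classical Brezzi--Hager--Raviart/Falk mechanism behind the halved convergence rate — so an additional interpolation or Young step with the correct companion factor (one that itself vanishes under refinement) is required to reach a purely quadratic right-hand side. As written, your argument does not close this step, and you should either carry the residual term explicitly in the statement (as Lemma~\ref{l:prelim_apriori_elliptic} does) or supply the problem-specific estimate.
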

The theorem adapts the classical error analysis for mixed formulations of variational inequalities for second-order elliptic operators \cite{Brezzi1977, Brezzi1978}. Optimal convergence rates depend on the regularity of solutions for the different variational inequalities. This regularity analysis is well understood for obstacle problems, see Remark \ref{regremark}.
\begin{proof}
For the first estimate we use Lemma~\ref{l:prelim_apriori_elliptic}.
For the second part, we use the discrete inf-sup condition 
\begin{equation*}
\hat{\beta} \|\lambda_H-\mu_H\|_{H^*} \leq \sup_{v_h\in \mathbb{V}_h} \frac{b(\mu_H-\lambda_H,v_h)}{\|v_h\|_{H}}.
\end{equation*}
On the other hand, from  (\ref{mixedFormulationProof}a) and (\ref{p:ell_mixed_disc}a)
\begin{equation*}
\begin{split}
b(\mu_H-\lambda_H,v_h) &= b(\mu_H,v_h) - b(\lambda_H,v_h) \\ &= b(\mu_H,v_h) +a(u_h,v_h) -\langle f, v_h \rangle \\ &=  b(\mu_H,v_h) +a(u_h,v_h) -a(u,v_h)-b(\lambda,v_h) \\ &= b(\mu_H-\lambda,v_h) + a(u_h-u,v_h) \\ &\leq c \left( \|\lambda-\mu_H\|_{H^*} +\|u-u_h\|_{H}\right)\|v_h\|_{H}.
\end{split}
\end{equation*}
Together with the inf-sup condition we conclude
\begin{equation*}
\|\lambda_H-\mu_H\|_{H^*} \lesssim \|\lambda-\mu_H\|_{H^*} +\|u-u_h\|_{H}.
\end{equation*}
The assertion follows from the triangle inequality.
\end{proof}

\subsection{A posteriori error estimates for mixed formulations}
In this section, we present a unified approach to derive a posteriori error estimates for elliptic contact problems. The contact condition only enters in the estimate for $b(\lambda_H-\lambda,u-u_h)$ below.
\begin{thm}\label{th:APostEll}
Let $(u,\lambda),\;(u_h,\lambda_H)$ be solutions of Problems~\ref{p:ell_mixed} and \ref{p:ell_mixed_disc}, respectively. Then
\begin{equation*}\label{e:Ell_apost}
\|u-u_h\|_{H}^2 + \|\lambda-\lambda_H\|_{H^*}^2 \lesssim \|r_h-\lambda_H\|^2_{H^*} + b(\lambda_H-\lambda,u-u_h).
\end{equation*}
\end{thm}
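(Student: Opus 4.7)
My plan is to derive the estimate by the same residual manipulation used in Lemma~\ref{l:Apost_obst}, but now working within the mixed setting so that the contact/friction hypothesis is isolated in a single structural term $b(\lambda_H-\lambda,u-u_h)$ on the right-hand side. Because $b(\mu,v)$ is the duality pairing $\langle\mu,v\rangle$, the continuous equation (\ref{mixedFormulationProof}a) together with the distributional identity $\langle r_h,v\rangle = \langle f,v\rangle - a(u_h,v)$ gives, for every $v\in H$,
\begin{equation*}
a(u-u_h,v) \;=\; \langle r_h,v\rangle - b(\lambda,v) \;=\; \langle r_h-\lambda_H,v\rangle + b(\lambda_H-\lambda,v).
\end{equation*}

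First, I would test this identity against $v=u-u_h$ and invoke coercivity of $a(\cdot,\cdot)$ on $H$ to get
\begin{equation*}
\alpha\|u-u_h\|_H^2 \;\le\; \langle r_h-\lambda_H,u-u_h\rangle + b(\lambda_H-\lambda,u-u_h).
\end{equation*}
A Cauchy--Schwarz bound in the $H^*$--$H$ duality followed by a Young inequality with parameter $\varepsilon\sim\alpha$ absorbs $\tfrac{\varepsilon}{2}\|u-u_h\|_H^2$ into the left-hand side and yields
\begin{equation*}
\|u-u_h\|_H^2 \;\lesssim\; \|r_h-\lambda_H\|_{H^*}^2 + b(\lambda_H-\lambda,u-u_h).
\end{equation*}

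Second, for the multiplier estimate I would use the residual identity above directly: for every $v\in H$,
\begin{equation*}
\langle \lambda-\lambda_H,v\rangle \;=\; \langle r_h-\lambda_H,v\rangle - a(u-u_h,v),
\end{equation*}
so continuity of $a$ and the definition of the $H^*$-norm give
\begin{equation*}
\|\lambda-\lambda_H\|_{H^*} \;\lesssim\; \|r_h-\lambda_H\|_{H^*} + \|u-u_h\|_H.
\end{equation*}
Squaring this and combining with the previous bound (where the coupling of $\|u-u_h\|_H^2$ into $\|\lambda-\lambda_H\|_{H^*}^2$ is harmless after re-absorbing constants) produces the claimed two-sided estimate
\begin{equation*}
\|u-u_h\|_H^2 + \|\lambda-\lambda_H\|_{H^*}^2 \;\lesssim\; \|r_h-\lambda_H\|_{H^*}^2 + b(\lambda_H-\lambda,u-u_h).
\end{equation*}

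The only non-mechanical step is making sure the term $b(\lambda_H-\lambda,u-u_h)$ is kept unexpanded; this is deliberate, since (as stressed in the statement preceding the theorem) the contact or friction condition enters solely through the bound of this quantity, and the shape of that bound differs for each of Problems~\ref{prob:obst}--\ref{prob:friction}. Handling it in general requires the variational inequalities (\ref{mixedFormulationProof}b) and (\ref{p:ell_mixed_disc}b) together with the specific choice of $\Lambda$ and $w$, and is therefore naturally left to a problem-by-problem post-processing step rather than treated inside this unified theorem.
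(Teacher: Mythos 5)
Your proof is correct and follows essentially the same route as the paper: both rest on the residual identity $a(u-u_h,v)=\langle r_h-\lambda_H,v\rangle+b(\lambda_H-\lambda,v)$ obtained by subtracting the (a)-equations of Problems~\ref{p:ell_mixed} and \ref{p:ell_mixed_disc}, then coercivity plus Young's inequality for the primal error and continuity of $a$ plus the dual norm for the multiplier. The only (harmless) difference is cosmetic: you test directly with $v=u-u_h$ and take the supremum over all $v\in H$, where the paper carries an auxiliary $v_h\in\mathbb{V}_h$ and invokes the inf-sup condition, neither of which is actually needed for the stated bound.
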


\begin{proof}
From the coercivity and the definitions of $\lambda$, respectively $\lambda_H$
\begin{equation*}
\begin{split}
\|u-u_h\|_{H_0^s(\Omega)}^2 &\lesssim a(u-u_h,u-u_h) = a(u-u_h,u-v_h) + a(u-u_h,v_h-u_h)\\
& = a(u-u_h,u-v_h) - b(\lambda-\lambda_H,v_h-u_h)\\
& = \langle f, u-v_h\rangle - a(u_h,u-v_h) - b(\lambda,u-v_h) - b(\lambda-\lambda_H,v_h-u_h) \\
& = \langle r_h, u-v_h \rangle - b(\lambda_H,u-v_h) + b(\lambda_H-\lambda,u-u_h)\\
&\lesssim \|r_h-\lambda_H\|_{H^*}\|u-v_h\|_{H} + b(\lambda_H-\lambda,u-u_h).
\end{split}
\end{equation*}
The estimate for $u$ follows from Young's inequality. For $\lambda$, we note
\begin{equation*}
\begin{split}
b(\lambda-\lambda_H,v) &= b(\lambda-\lambda_H,v-v_h) - a(u-u_h,v_h)\\
& = \langle f, v-v_h\rangle -a(u_h,v-v_h) -b(\lambda_H,v-v_h)+a(u_h-u,v)\\
& = \langle r_h, v-v_h \rangle - b(\lambda_H,v-v_h)+a(u_h-u,v)\\
& \lesssim \|r_h-\lambda_H\|_{H^*} \|v-v_h\|_{H} + \|u-u_h\|_{H}\|v\|_{H}.
\end{split}
\end{equation*}
for all $v_h$.

Choosing $v_h = 0$ we obtain
\begin{equation*}
b(\lambda-\lambda_H,v) \lesssim (\|r_h - \lambda_H\|_{H^*} + \|u-u_h\|_{H})\|v\|_{H}.
\end{equation*}
The assertion follows from the inf-sup condition.
\end{proof}

The following lemma provides computable estimates for the term $b(\lambda_H-\lambda,u-u_h)$ in case of Problems~\ref{prob:obst}--\ref{prob:friction}. 

\begin{lem}\label{l:b}
Let $(u,\lambda),\;(u_h,\lambda_H)$ be solutions of Problems~\ref{p:ell_mixed} and \ref{p:ell_mixed_disc}, respectively, associated with Problems~\ref{prob:obst}--\ref{prob:friction}. Suppose that $\Lambda_H \subset \Lambda$. Then, for the respective problems,
\begin{align*}
(i)\;b(\lambda_H-\lambda,u-u_h) \leq  & b(\lambda_H, \chi- u_h),\\
(ii)\;b(\lambda_H-\lambda,u-u_h) \leq & b(\lambda_H, g - u_h),\\
\begin{split}
(iii)\;b(\lambda_H - \lambda, u-u_h) \leq &\| (|\lambda_H| - \mathcal{F})^+ \|_{H^{-s}(S)} \|u-u_h\|_{H^s_0(S)} - b((|\lambda_H|-\mathcal{F})^-, |u_h|)\\ &\quad+ b(|\lambda_H|, |u_h|) - b(\lambda_H,u_h),
\end{split}\\
\begin{split}
(iv)\;b(\lambda_H - \lambda, u-u_h) \leq &\| (|\lambda_H| - \mathcal{F})^+ \|_{H^{1/2-s}(\widetilde{\Gamma})} \|u-u_h\|_{H^{s-1/2}(\widetilde{\Gamma})} - b((|\lambda_H|-\mathcal{F})^-, |u_h|)\\ &\quad+ b(|\lambda_H|, |u_h|) - b(\lambda_H,u_h).
\end{split}
\end{align*}
for Problems~\ref{prob:obst}--\ref{prob:friction}, respectively.
\end{lem}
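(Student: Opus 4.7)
I would treat the four cases in parallel, starting from the bilinear decomposition
\[
b(\lambda_H - \lambda, u - u_h) \;=\; b(\lambda_H - \lambda, u) \;-\; b(\lambda_H - \lambda, u_h),
\]
and bounding each summand via the continuous and discrete saddle-point inequalities (\ref{mixedFormulationProof}b), (\ref{p:ell_mixed_disc}b). Because $\Lambda_H \subset \Lambda$ by hypothesis, the element $\lambda_H$ is admissible as a test in (\ref{mixedFormulationProof}b), so the choice $\mu = \lambda_H$ yields
\[
b(\lambda_H - \lambda, u) \leq \langle \lambda_H - \lambda, w \rangle
\]
across all four cases. The difficulty concentrates in the second summand $-b(\lambda_H - \lambda, u_h)$, since $\lambda$ is in general not admissible in (\ref{p:ell_mixed_disc}b).

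For the obstacle (i) and Signorini (ii) problems I would exploit that $\Lambda_o$ and $\Lambda_s$ are sign cones, so the admissible choices $\mu = 0,\, 2\lambda$ in (\ref{mixedFormulationProof}b) and $\mu_H = 0,\, 2\lambda_H$ in (\ref{p:ell_mixed_disc}b) yield the two complementarity identities $b(\lambda, u) = \langle \lambda, w\rangle$ and $b(\lambda_H, u_h) = \langle \lambda_H, w\rangle$. Substituting back, the bound collapses once the cross-term $b(\lambda, u_h) - \langle \lambda, w\rangle = b(\lambda, u_h - w)$ is shown to be non-positive; this follows from $\lambda$ being a non-positive functional together with the discrete one-sided feasibility of $u_h$ inherited from (\ref{p:ell_mixed_disc}b). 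Assembling the surviving terms yields the asserted $b(\lambda_H, \chi - u_h)$ for (i) and its $\widetilde{\Gamma}$-restricted counterpart $b(\lambda_H, g - u_h|_{\widetilde{\Gamma}})$ for (ii), with the only difference being that the pairing is taken on $\widetilde{\Gamma}$ between $H^{s-1/2}(\widetilde{\Gamma})$ and its dual.

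For the interior friction (iii) and frictional contact (iv) problems the hypothesis $w = 0$ simplifies the continuous estimate to $b(\lambda_H - \lambda, u) \leq 0$, and the identity $b(\lambda, u) = \sup_{\mu \in \Lambda_I} b(\mu, u) = j_S(u)$ from the proof of Theorem~\ref{t:ell_equivalence}(iii) plays the role of complementarity. My plan to handle the discrete term is to decompose $\lambda_H$ according to the sign and magnitude of $|\lambda_H|$ relative to $\mathcal{F}$: the part $\mathrm{sign}(\lambda_H)\min(|\lambda_H|,\mathcal{F})$ is admissible in $\Lambda_I$, so that testing it against (\ref{p:ell_mixed_disc}b) and regrouping using $b(\lambda, u_h) \leq j_S(u_h)$ generates the terms $-b((|\lambda_H|-\mathcal{F})^-, |u_h|)$ and $b(|\lambda_H|, |u_h|) - b(\lambda_H, u_h)$. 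The excess part $\mathrm{sign}(\lambda_H)(|\lambda_H| - \mathcal{F})^+$ is not admissible and is absorbed by a single Cauchy--Schwarz estimate in $H^{-s}(S) \times H^s_0(S)$ (respectively in $H^{1/2-s}(\widetilde{\Gamma}) \times H^{s-1/2}(\widetilde{\Gamma})$), producing the first residual term in the bound.

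The main obstacle will be the friction cases: the absolute value in the admissibility constraint $\langle \mu, v\rangle \leq \langle \mathcal{F}, |v|\rangle$ of $\Lambda_I$ and $\Lambda_{\widetilde{\Gamma}}$ forces the sign/magnitude splitting of $\lambda_H$ and careful cancellation of several pairings between $\lambda_H$, $|\lambda_H|$, and $\mathcal{F}$ before the combination produces fully computable quantities. In contrast, the obstacle and Signorini cases (i)-(ii) should follow relatively directly from the conic structure of $\Lambda_o,\Lambda_s$ and the substitution device $\mu = 2\lambda,\ \mu_H = 2\lambda_H$ which is already used in the proof of Theorem~\ref{t:ell_equivalence}.
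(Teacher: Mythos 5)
Your cases (i)--(ii) are correct and essentially the paper's argument in a different order: both rest on the continuous complementarity $b(\lambda,u)=\langle\lambda,\chi\rangle$, the sign structure of $\Lambda_o$, $\Lambda_s$, and the (conforming) feasibility $u_h\ge\chi$; the paper merely packages the same facts as ``$b(\lambda,u-u_h)\ge 0$'' and ``$b(\lambda_H,u-\chi)\le 0$'' instead of invoking the test functions $\mu=0,2\lambda$ and $\mu_H=0,2\lambda_H$. For (iii)--(iv) your route genuinely differs from the paper's. You use $\Lambda_H\subset\Lambda$ to take $\mu=\lambda_H$ in (\ref{mixedFormulationProof}b), giving $b(\lambda_H-\lambda,u)\le 0$; combined with $b(\lambda,u_h)\le j_S(u_h)=b(\mathcal F,|u_h|)$ and the decomposition $\mathcal F=|\lambda_H|-(|\lambda_H|-\mathcal F)^+-(|\lambda_H|-\mathcal F)^-$, this already yields the asserted bound --- in fact a sharper one in which the Cauchy--Schwarz term is superfluous (it vanishes, since $|\lambda_H|\le\mathcal F$ under the hypothesis). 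The paper deliberately avoids testing $\mu=\lambda_H$ in the continuous inequality: it bounds $b(\lambda_H,u)-j_S(u)\le b((|\lambda_H|-\mathcal F)^+,|u|)$ and then splits $|u|\le|u-u_h|+|u_h|$, which is where the factor $\|u-u_h\|$ in the first term actually originates; this keeps the estimate meaningful when the discrete multiplier violates the continuous constraint. Your sign/magnitude splitting of $\lambda_H$ is the right instinct for that nonconforming situation, but note the internal tension in your sketch: if the excess part $\mathrm{sign}(\lambda_H)(|\lambda_H|-\mathcal F)^+$ is nonzero then $\lambda_H\notin\Lambda$ and the step $b(\lambda_H-\lambda,u)\le 0$ is unavailable, and the excess is paired with $|u|$, not $|u-u_h|$, until one inserts the triangle inequality --- a step you omit. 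As a proof of the lemma as stated (with $\Lambda_H\subset\Lambda$) your shorter argument goes through; just be aware that it renders the first term of (iii)--(iv) inert, whereas the paper's derivation explains why that term is kept as a computable consistency residual.
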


\begin{proof}
$(i)$ In the case of the obstacle problem, we use the fact that $b(\lambda, u-u_h) \geq 0$ and the constraint $u-\chi \geq 0$ to obtain
\begin{equation*}\label{e:b_obstacle}
\begin{split}
b(\lambda_H-\lambda,u-u_h) &= b(\lambda_H,u-u_h)- b(\lambda,u-u_h)\\
& \leq b(\lambda_H,u - \chi) - b(\lambda_H,u_h - \chi )\\
& \leq b(\lambda_H, \chi - u_h).
\end{split}
\end{equation*}
$(ii)$ In the case of the Signorini problem, we notice that $b(\lambda, u-g) = 0$ and $b(\lambda_H, u-g) \leq 0$. The estimate follows directly as in the case of the obstacle problem \ref{pr:obst}. \\
$(iii)$ In the case of the interior friction, we notice that $b(\lambda, u) = j(u)$ and $b(\lambda,u_h) \leq j(u_h)$ to obtain the computable estimate
\begin{equation*}\label{e:b_int_friction}
\begin{split}
b(\lambda_H-\lambda,u-u_h) &= b(\lambda_H,u)-b(\lambda_H,u_h) -b(\lambda,u_h) + b(\lambda,u) \\
&\leq b(\lambda_H,u)-b(\lambda_H,u_h) +j(u_h) - j(u) \\
&\leq j(u_h) -b(\lambda_H,u_h) +b((|\lambda_H|-\mathcal{F})^+,|u|)\\
&\leq j(u_h) -b(\lambda_H,u_h) + b((|\lambda_H|-\mathcal{F})^+,|u-u_h|)\\ & \qquad + b((|\lambda_H|-\mathcal{F})^+,|u_h|)\\
&= b((|\lambda_H|-\mathcal{F})^+,|u-u_h|) - b(\lambda_H,u_h) + b(|\lambda_H|,|u_h|) \\ & \qquad- b((|\lambda_H|-\mathcal{F})^-,|u_h|)\\
&\leq \|(|\lambda_H|-\mathcal{F})^+\|_{H^{-s}(S)} \|u-u_h\|_{H^s_0(S)} - b(\lambda_H,u_h) + b(|\lambda_H|,|u_h|) \\ & \qquad- b((|\lambda_H|-\mathcal{F})^-,|u_h|).
\end{split}
\end{equation*}
We conclude by using Young's inequality for the first term.\\
$(iv)$ In the case of the friction problem, we proceed as in the case of interior friction. The only difference is in the estimate of the duality pairing in the last line of \eqref{e:b_int_friction}.
\end{proof}
Note that in the absence of constraints, the a posteriori error estimate reduces to a standard residual error estimate as in \cite{Glusa}. In order to be able to compute the negative Sobolev norm of order $-s$ on the right hand side of the estimate we can employ localization arguments as in \cite{Carstensen1, Nochetto}. The following result provides a computable estimate of the negative norms.
\begin{lem}
Let $R \in L^2(\Omega)$ which satisfies $\langle R, \phi_i \rangle = 0$ for all $i \in \mathcal{P}_h$.
Then
\begin{equation*}
\|R\|^2_{H^{-s}(\Omega)} \lesssim \sum _{i \in \mathcal{P}_h \setminus \mathcal{C}_h} h_i^{2s} \|R\|^2_{L^2(S_i)},
\end{equation*}
where $h_i:=\mathrm{diam}(S_i)$ and $\mathcal{C}_h$ is the contact region.
\end{lem}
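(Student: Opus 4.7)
The plan is to exploit the duality characterization $\|R\|_{H^{-s}(\Omega)} = \sup_{v \in H^s_0(\Omega),\ \|v\|_{H^s} = 1} \langle R, v \rangle$ and, for each competitor $v$, to subtract a suitable quasi-interpolant that is annihilated by $R$ thanks to the nodal orthogonality hypothesis. Concretely, for $v \in H^s_0(\Omega)$ I would introduce a Scott--Zhang type quasi-interpolant $I_h v \in \mathbb{V}_h$, modified so that $I_h v$ vanishes at all contact nodes in $\mathcal{C}_h$. Thus $I_h v$ is a linear combination of the hat functions $\phi_i$ with $i \in \mathcal{P}_h \setminus \mathcal{C}_h$ (interpreting the orthogonality assumption accordingly, as usual for residuals in the obstacle/contact setting), so that $\langle R, I_h v\rangle = 0$ by the hypothesis on $R$. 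This yields
\begin{equation*}
\langle R, v\rangle = \langle R, v - I_h v\rangle = \sum_{i \in \mathcal{P}_h \setminus \mathcal{C}_h} \int_{S_i} R\,(v - I_h v)\,\mathrm{d}x,
\end{equation*}
after localizing the integral to the patches $S_i$ (using a partition of unity tied to the hat functions, so that only patches away from $\mathcal{C}_h$ contribute).

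Next I would apply Cauchy--Schwarz patch by patch:
\begin{equation*}
\langle R, v\rangle \leq \sum_{i \in \mathcal{P}_h \setminus \mathcal{C}_h} \|R\|_{L^2(S_i)}\,\|v - I_h v\|_{L^2(S_i)},
\end{equation*}
and then invoke the local approximation estimate for the quasi-interpolant on the fractional Sobolev scale,
\begin{equation*}
\|v - I_h v\|_{L^2(S_i)} \lesssim h_i^{s}\,|v|_{H^s(\widetilde{S}_i)},
\end{equation*}
where $\widetilde{S}_i$ is an enlarged patch of uniformly bounded overlap. This Bramble--Hilbert style bound on shape-regular meshes is standard for fractional norms and is the analytic workhorse. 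A second application of Cauchy--Schwarz (now in the index $i$) then gives
\begin{equation*}
\langle R, v\rangle \lesssim \Bigl( \sum_{i \in \mathcal{P}_h \setminus \mathcal{C}_h} h_i^{2s}\,\|R\|^2_{L^2(S_i)} \Bigr)^{1/2} \Bigl( \sum_{i \in \mathcal{P}_h \setminus \mathcal{C}_h} |v|^2_{H^s(\widetilde{S}_i)} \Bigr)^{1/2}.
\end{equation*}
Finally, the finite overlap property of the enlarged patches $\widetilde{S}_i$, together with the inclusion $\widetilde{S}_i \times \widetilde{S}_i \subset \Omega \times \Omega$, bounds the second factor by a constant multiple of $|v|_{H^s(\Omega)} \leq \|v\|_{H^s(\Omega)}$. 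Taking the supremum over $v$ concludes the proof.

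The main obstacle is the last step involving the nonlocality of the $H^s$ seminorm: unlike the $H^1$ case, one cannot naively decompose $|v|_{H^s(\Omega)}^2$ as a sum of $|v|_{H^s(\widetilde{S}_i)}^2$ because the double integral defining the Aronszajn--Slobodeckij seminorm couples points at arbitrary distance. The correct observation is that, although one loses the contribution from far-apart pairs of points, the local patch seminorms individually bound by the full global seminorm on $\Omega$, so $\sum_i |v|_{H^s(\widetilde{S}_i)}^2 \lesssim |v|_{H^s(\Omega)}^2$ holds up to the finite overlap constant. Equivalently, one can use a Faermann-type localization argument tailored to fractional norms, as employed in boundary element a posteriori analysis, which is the approach presumably invoked by the references \cite{Carstensen1, Nochetto}.
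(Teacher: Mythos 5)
The paper does not actually prove this lemma: it is stated and justified only by citation to \cite[Theorem~4.1]{Carstensen1} (no contact) and to \cite{BanzHeiko} for the contact case. Your proposal reconstructs precisely that standard argument --- duality, subtraction of a nodal quasi-interpolant killed by the orthogonality $\langle R,\phi_i\rangle=0$, a local fractional approximation/Poincar\'e bound $\|v-I_hv\|_{L^2(S_i)}\lesssim h_i^s|v|_{H^s(\widetilde S_i)}$, and the easy (superadditive) direction of the patchwise localization $\sum_i|v|^2_{H^s(\widetilde S_i)}\lesssim|v|^2_{H^s(\Omega)}$ by finite overlap. You are right that only this easy direction is needed, not the Faermann lower bound; Carstensen's original version uses weighted patch means $c_i=\int_{S_i}v\phi_i/\int_{S_i}\phi_i$ together with the partition of unity $\sum_i\phi_i=1$ rather than a Scott--Zhang operator, but the two devices are interchangeable here. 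Up to the point below, the argument is sound and yields the estimate with the sum taken over \emph{all} of $\mathcal{P}_h$.

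The one genuine gap is the restriction of the sum to $i\in\mathcal{P}_h\setminus\mathcal{C}_h$. Your claim that, after inserting the partition of unity, ``only patches away from $\mathcal{C}_h$ contribute'' does not follow from anything in the construction: the decomposition $\langle R,v-I_hv\rangle=\sum_{i\in\mathcal{P}_h}\int_{S_i}R\,(v-I_hv)\,\phi_i$ runs over every node, and the terms with $i\in\mathcal{C}_h$ have no reason to vanish. Worse, your modification of $I_h$ to vanish at the contact nodes works against you: on patches touching $\mathcal{C}_h$ the interpolant no longer reproduces $v$ locally, so the approximation bound $\|v-I_hv\|_{L^2(S_i)}\lesssim h_i^s|v|_{H^s(\widetilde S_i)}$ fails there, and those patches can only be discarded under an additional hypothesis (e.g.\ $R\equiv 0$ on $\bigcup_{i\in\mathcal{C}_h}S_i$, or a separate treatment of the contact contribution via the complementarity terms, as in \cite{BanzHeiko}). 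This looseness is admittedly already present in the lemma as stated in the paper, but your proof should either prove the weaker all-node version or make the extra assumption on $R$ near the contact set explicit.
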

This estimate goes back to \cite[Theorem~4.1.]{Carstensen1} in the absence of contact. For contact problems such estimates are commonly used away from the contact area, see for example \cite{BanzHeiko}. \\

Note, however, that for sufficiently large $0<s<1$ the residue $r_h-\lambda_H$ does not lie in $L^2(\Omega)$, but only in $H^{-\varepsilon}(\Omega)$ for some $\varepsilon>0$. \cite{Nochetto} extends the above arguments to $r_h - \lambda_H \in L^p$, with $1\leq p < \infty$.  For In our setting, this leads to an a posteriori error estimate as in Theorem~\ref{th:APostEll}:
\begin{equation*}
\begin{split}
\|u-u_h\|_{H^s_0(\Omega)}^2 + \|\lambda-\lambda_H\|_{H^{-s}(\Omega)}^2 \lesssim \sum_{i\in \mathcal{P}_h \setminus \mathcal{C}_h }&h_i^{2s+ d(1-\frac{2}{p})} \|(r_h-\lambda_H-r_{hi}+\lambda_{Hi})\phi_i\|^2_{L^p(S_i)} \\ &+ b(\lambda_H-\lambda,u-u_h),
\end{split}
\end{equation*}
where $g_i = \frac{\int_{S_i}g_i\phi_i}{\int_{S_i}\phi_i}$ for the interior nodes $ i \in \mathcal{P}_h$ and $g_i=0$ otherwise.

\begin{rem} \textcolor{black}{The implicit constants in the error estimates depend on $s$ through the continuity and coercivity of the bilinear form, the trace theorem for Sobolev spaces, and  properties of the triangulation as in \cite{Carstensen1}. In particular, they remain bounded for $s \to 1^{-}$.} 
\end{rem}

\section{Parabolic problems}\label{sec:Parabolic}
In this section we discuss the time-dependent counterparts to the elliptic variational inequalities. The time dependence introduces additional difficulties in the analysis. 
\subsection{A priori error estimate for variational inequalities}
We begin by the extension of Falk's lemma for parabolic Problems \ref{p:para1} and \ref{p:para2}. We present the proof only for Problem~\ref{p:para2}. The proof for Problem~\ref{p:para1} holds verbatim, omitting terms related to $j(\cdot)$.
\begin{lem}\label{thm:Para_Falk1}
Let $u\in \mathbb{W}_K(0,T)$ and $u_{h\tau} \in K_{h\tau}$ be solutions of Problem~\ref{p:para1} and \ref{p:disc_para1}, respectively. Let $v \in \mathbb{W}_K(0,T)$ and $v_{h\tau} \in K_{h\tau} \cap C(0,T;H)$. Then,
\begin{equation*}
\begin{split}
\|u-u_{h\tau}&\|_{L^2(0,T;H)}^2 + \|(u-u_{h\tau})^0_+\|_{L^2(\Omega)}^2 + \sum_{k=1}^{M-1} \|[u-u_{h\tau}]^k\|_{L^2(\Omega)}^2 +\|(u-u_{h\tau})^M_-\|_{L^2(\Omega)}^2 \\  &\lesssim  \inf_{v \in K} \lbrace \|f-Au-\partial_t u\|_{L^2(0,T;H^*)} \|u_{h\tau}-v\|_{L^2(0,T;H^s)} \rbrace \\ &+ \inf_{v_{h\tau} \in K_{h\tau}} \lbrace \|(u-v_{h\tau})^M_-\|_{L^2(\Omega)}^2 + \|f-Au-\partial_t u\|_{L^2(0,T;H^*)} \|u-v_{h\tau}\|_{L^2(0,T;H)} \\ &+ \|\partial_t(u-v_{h\tau})\|_{L^2(0,T;H^*)}^2+\|u-v_{h\tau}\|_{L^2(0,T;H)}^2  \rbrace.
\end{split}
\end{equation*}
\end{lem}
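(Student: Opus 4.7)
The plan is to adapt the elliptic Falk estimate (Lemma~\ref{l:falk_ell1}) to the space--time discontinuous-Galerkin setting, with the coercivity bound of Lemma~\ref{l:coercive} as the backbone. Setting $e := u - u_{h\tau}$ and using that $u \in \mathbb{W}(0,T) \hookrightarrow C(0,T;L^2(\Omega))$ is continuous in time, one has $[e]^k = -[u_{h\tau}]^k$ at the interior times and $e_+^0 = u_0 - u_{h\tau,+}^0$, so the proof of Lemma~\ref{l:coercive} applies to $e$ directly and bounds the target left-hand side by $B_{DG}(e,e) + \sum_{k=1}^M\langle[e]^{k-1}, e_+^{k-1}\rangle$. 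The remainder of the argument consists in controlling this quantity by the approximation quantities in the statement.

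I would then introduce $\eta := u - v_{h\tau}$, continuous in time because $v_{h\tau} \in K_{h\tau} \cap C(0,T;H)$, and $\psi := v_{h\tau} - u_{h\tau}$, so that $e = \eta + \psi$ and $B_{DG}(e,e) = B_{DG}(e,\eta) + B_{DG}(e,\psi)$. The first piece will be handled by the continuity of $a(\cdot,\cdot)$ together with integration by parts of $\int_{I_k}\langle \partial_t e, \eta\rangle\,\dt$ on each subinterval, producing terms in $\|u-v_{h\tau}\|_{L^2(0,T;H)}$, $\|\partial_t(u-v_{h\tau})\|_{L^2(0,T;H^*)}$, and $\|(u-v_{h\tau})_-^M\|_{L^2(\Omega)}$. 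For the second piece, testing the discrete variational inequality (Problem~\ref{p:disc_para1}) with $v_{h\tau}$ and using $B_{DG}(u,\psi)=\int_0^T\langle \partial_t u + (-\Delta)^s u,\psi\rangle\,\dt$ (thanks to the continuity of $u$) gives
\begin{equation*}
B_{DG}(e,\psi) \leq \int_0^T \langle \partial_t u + (-\Delta)^s u - f, \psi\rangle\,\dt + \sum_{k=1}^M \langle[u_{h\tau}]^{k-1}, \psi_+^{k-1}\rangle.
\end{equation*}
Decomposing $\psi = e - \eta$ and testing the continuous variational inequality (Problem~\ref{p:para1}) with an arbitrary $v \in \mathbb{W}_K(0,T)$ yields
\begin{equation*}
\int_0^T \langle \partial_t u + (-\Delta)^s u - f,\, e\rangle\,\dt \leq \|f - (-\Delta)^s u - \partial_t u\|_{L^2(0,T;H^*)}\,\|v - u_{h\tau}\|_{L^2(0,T;H)},
\end{equation*}
which produces the first infimum of the statement, while the $\eta$-piece of the residual is bounded by Cauchy--Schwarz and feeds into the residual term of the second infimum.

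The main obstacle will be the careful bookkeeping of the jump contributions. The sum $\sum\langle[u_{h\tau}]^{k-1}, \psi_+^{k-1}\rangle = -\sum\langle[e]^{k-1},\psi_+^{k-1}\rangle$ arising from the discrete-VI step must cancel the matching piece of $\sum\langle[e]^{k-1}, e_+^{k-1}\rangle = \sum\langle[e]^{k-1}, \eta_+^{k-1}\rangle + \sum\langle[e]^{k-1}, \psi_+^{k-1}\rangle$ on the left, leaving only $\sum\langle[e]^{k-1}, \eta_+^{k-1}\rangle$. This residual piece can be absorbed via Young's inequality into $\tfrac12\|e_+^0\|^2 + \tfrac12\sum_{k=1}^{M-1}\|[e]^k\|^2 + \tfrac12\|e_-^M\|^2$, producing only quantities of the form $\|(u-v_{h\tau})_-^M\|^2_{L^2(\Omega)}$ and $\|\eta\|^2_{L^2(0,T;H)}$ already present in the stated infimum. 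A final absorption of the $\alpha\|e\|_{L^2(0,T;H)}^2$ term and infimisation over $v \in \mathbb{W}_K(0,T)$ and $v_{h\tau} \in K_{h\tau}\cap C(0,T;H)$ then completes the proof.
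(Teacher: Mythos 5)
Your overall architecture is the paper's: it likewise combines the continuous inequality tested with $v$, the discrete inequality tested with $v_{h\tau}$, the coercivity of Lemma~\ref{l:coercive} applied to $e=u-u_{h\tau}$, and an integration by parts in time of the cross term $B_{DG}(e,u-v_{h\tau})$, followed by Cauchy--Schwarz and absorption. Your splitting $e=\eta+\psi$, the cancellation of $\sum_k\langle[u_{h\tau}]^{k-1},\psi_+^{k-1}\rangle=-\sum_k\langle[e]^{k-1},\psi_+^{k-1}\rangle$ against the matching piece of $\sum_k\langle[e]^{k-1},e_+^{k-1}\rangle$, and the use of the continuous inequality to bound $\int_0^T\langle\partial_t u+(-\Delta)^su-f,e\rangle\,\dt$ by the first infimum are all correct reorganizations of the same algebra.

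The one step that fails as written is the disposal of the leftover term $\sum_{k=1}^M\langle[e]^{k-1},\eta_+^{k-1}\rangle$. Applying Young's inequality to $\langle[e]^k,\eta(t_k)\rangle$ leaves $\sum_{k=0}^{M-1}\|\eta(t_k)\|_{L^2(\Omega)}^2$ on the right; this is a sum of $M$ pointwise-in-time $L^2(\Omega)$ norms and is \emph{not} bounded by $\|\eta\|_{L^2(0,T;H)}^2+\|\partial_t\eta\|^2_{L^2(0,T;H^*)}+\|\eta_-^M\|^2_{L^2(\Omega)}$ uniformly in the number of time steps: each summand is controlled via the embedding $\mathbb{W}(0,T)\hookrightarrow C^0(0,T;L^2(\Omega))$, but the sum costs a factor $M\sim\tau^{-1}$. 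The term must be cancelled, not absorbed. Summing the per-interval integrations by parts of $\int_{I_k}\langle\partial_t e,\eta\rangle\,\dt$ produces the boundary contributions $\sum_k\bigl(\langle e_-^k,\eta^k\rangle-\langle e_+^{k-1},\eta^{k-1}\rangle\bigr)=\langle e_-^M,\eta^M\rangle-\langle e_+^0,\eta^0\rangle-\sum_{k=1}^{M-1}\langle[e]^k,\eta^k\rangle$, whose interior part exactly cancels $\sum_{k=1}^{M}\langle[e]^{k-1},\eta_+^{k-1}\rangle$ up to the $k=1$ term (and $[e]^0=e_+^0$ because $u(0)=u_0=u_{h\tau}^0$), leaving only $\langle e_-^M,\eta_-^M\rangle$, which Young then turns into the $\|(u-v_{h\tau})_-^M\|^2_{L^2(\Omega)}$ term of the statement. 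This is exactly why the paper keeps the jump terms attached to the integration by parts: the residual boundary contribution there is $\sum_k\langle e_-^k,[u-v_{h\tau}]^k\rangle$, which vanishes because $v_{h\tau}$ is continuous in time. With this correction your argument goes through.
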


\begin{lem}\label{thm:Para_Falk}
Let $u\in \mathbb{W}(0,T)$ and $u_{h\tau} \in \mathbb{W}_{h\tau}(0,T)$ be solutions of Problem~\ref{p:para2} and \ref{p:disc_para2}, respectively. Let $v \in \mathbb{W}(0,T)$ and $v_{h\tau} \in \mathbb{W}_{h\tau}(0,T) \cap C(0,T;H)$. Then,
\begin{equation*}
\begin{split}
\|u-u_{h\tau}&\|_{L^2(0,T;H)}^2 + \|(u-u_{h\tau})^0_+\|_{L^2(\Omega)}^2 + \sum_{k=1}^{M-1} \|[u-u_{h\tau}]^k\|_{L^2(\Omega)}^2 +\|(u-u_{h\tau})^M_-\|_{L^2(\Omega)}^2 \\  &\lesssim  \inf_{v \in K} \lbrace \|f-Au-\partial_t u\|_{L^2(0,T;H^*)} \|u_{h\tau}-v\|_{L^2(0,T;H)} + \sum_{k=1}^M\int_{I_k}j(u_{h\tau})-j(v) \dt \rbrace \\ &+ \inf_{v_{h\tau} \in K_{h\tau}} \lbrace \|(u-v_{h\tau})^M_-\|_{L^2(\Omega)}^2 + \|f-Au-\partial_t u\|_{L^2(0,T;H^*)} \|u-v_{h\tau}\|_{L^2(0,T;H)} \\ &+ \|\partial_t(u-v_{h\tau})\|_{L^2(0,T;H^*)}^2+\|u-v_{h\tau}\|_{L^2(0,T;H)}^2  + \sum_{k=1}^M\int_{I_k} j(u)-j(v_{h\tau}) \dt \rbrace.
\end{split}
\end{equation*}
\end{lem}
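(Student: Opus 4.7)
The plan is to mimic the elliptic Falk-type argument of Lemma~\ref{l:falk_ell2}, but anchored on the discrete coercivity provided by Lemma~\ref{l:coercive} and combined with the two variational inequalities to control the residual $R := f - \partial_t u - Au \in L^2(0,T;H^{*})$. Fix an arbitrary $v\in \mathbb{W}_K(0,T)$ and $v_{h\tau} \in K_{h\tau} \cap C(0,T;H)$, and set $\xi_{h\tau} := v_{h\tau} - u_{h\tau} \in \mathbb{W}_{h\tau}(0,T)$. Since $u$ and $v_{h\tau}$ are time-continuous, all jumps of $\xi_{h\tau}$ coincide with the jumps of $-u_{h\tau}$, and hence with those of $u - u_{h\tau}$.

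First I would apply Lemma~\ref{l:coercive} to $\xi_{h\tau}$, yielding the lower bound
\begin{equation*}
\alpha \|\xi_{h\tau}\|_{L^2(0,T;H)}^2 + \tfrac{1}{2}\|\xi_{h\tau\,+}^0\|_{L^2(\Omega)}^2 + \tfrac{1}{2}\sum_{k=1}^{M-1}\|[\xi_{h\tau}]^k\|_{L^2(\Omega)}^2 + \tfrac{1}{2}\|\xi_{h\tau\,-}^M\|_{L^2(\Omega)}^2 \leq B_{DG}(\xi_{h\tau},\xi_{h\tau}) + \sum_{k=1}^M \langle [\xi_{h\tau}]^{k-1}, \xi_{h\tau\,+}^{k-1}\rangle.
\end{equation*}
Then I would decompose the right-hand side as $B_{DG}(\xi_{h\tau},\xi_{h\tau}) = B_{DG}(v_{h\tau}-u,\xi_{h\tau}) + B_{DG}(u-u_{h\tau},\xi_{h\tau})$ and treat the two summands separately. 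The first is bounded by standard continuity arguments, producing the terms $\|\partial_t(u-v_{h\tau})\|_{L^2(0,T;H^*)}^2$ and $\|u-v_{h\tau}\|_{L^2(0,T;H)}^2$ after Young's inequality.

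For the second summand I would use that $u$ is continuous in time, so $B_{DG}(u,\xi_{h\tau}) = \int_0^T \langle f - R, \xi_{h\tau}\rangle\,\dt$, and combine with the discrete variational inequality (Problem~\ref{p:disc_para2}) tested against $v_{h\tau}$. The $\langle f,\xi_{h\tau}\rangle$ contributions cancel, leaving
\begin{equation*}
B_{DG}(u - u_{h\tau},\xi_{h\tau}) \leq -\int_0^T \langle R, \xi_{h\tau}\rangle\,\dt + \sum_{k=1}^M \int_{I_k}\!\!\bigl(j(v_{h\tau}) - j(u_{h\tau})\bigr)\,\dt + \sum_{k=1}^M \langle [u_{h\tau}]^{k-1}, \xi_{h\tau\,+}^{k-1}\rangle.
\end{equation*}
The last sum, together with $\sum_k \langle [\xi_{h\tau}]^{k-1}, \xi_{h\tau\,+}^{k-1}\rangle$ from the coercivity step, telescopes (using $[v_{h\tau}]^{k-1} = 0$) and, by Young's inequality, is absorbed into the jump norms on the left-hand side up to a remainder $\|(u-v_{h\tau})^0\|_{L^2}^2$ from the initial datum.

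Next I would split the residual pairing as $-\int \langle R,\xi_{h\tau}\rangle = -\int\langle R, v-u_{h\tau}\rangle - \int\langle R, v_{h\tau}-v\rangle$. The second piece is controlled by Cauchy--Schwarz by $\|R\|_{L^2(0,T;H^*)}\|v_{h\tau}-v\|_{L^2(0,T;H)}$. For the first piece I would insert $u$ and invoke the continuous variational inequality in the form $\int \langle R, v-u\rangle \leq \int (j(v) - j(u))\,\dt$, producing a term $\int (j(u_{h\tau}) - j(v))\,\dt$ after rearrangement and a controllable remainder $\|R\|\,\|u_{h\tau}-v\|$, where the replacement $j(u)\leftrightarrow j(u_{h\tau})$ uses the Lipschitz property of $j_S$ and $j_{\widetilde\Gamma}$ (combined with Young's inequality against $\|u-u_{h\tau}\|$). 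A final triangle-inequality step $\|u-u_{h\tau}\|_{L^2(0,T;H)}^2 \leq 2\|u-v_{h\tau}\|_{L^2(0,T;H)}^2 + 2\|\xi_{h\tau}\|_{L^2(0,T;H)}^2$ (and analogously for the $L^2(\Omega)$ jump and boundary terms) together with infima over $v$ and $v_{h\tau}$ yields the claim.

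The main obstacle I anticipate is the bookkeeping of the jump contributions: matching the DG jump pairings arising from the discrete variational inequality with the jump norms of $u - u_{h\tau}$ appearing on the left, and doing so while simultaneously juggling the $j(\cdot)$ contributions so that they appear with the precise signs $j(u_{h\tau}) - j(v)$ and $j(u) - j(v_{h\tau})$ stated in the lemma rather than their negatives. A secondary technicality is ensuring that the Lipschitz continuity of $j$ is invoked only in a way that the resulting $\|u-u_{h\tau}\|$-terms can be absorbed into the coercive left-hand side by Young's inequality.
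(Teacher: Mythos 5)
Your route is genuinely different from the paper's: the paper sums the continuous and discrete variational inequalities, subtracts the mixed terms to form $B_{DG}(u-u_{h\tau},u-u_{h\tau})$, applies the coercivity of Lemma~\ref{l:coercive} to $u-u_{h\tau}$ itself, and then integrates by parts in time on $B_{DG}(u-u_{h\tau},u-v_{h\tau})$; you instead apply Lemma~\ref{l:coercive} to the discrete error $\xi_{h\tau}=v_{h\tau}-u_{h\tau}$ (which respects the lemma's hypotheses more literally), use the discrete inequality as a Galerkin-orthogonality substitute, and recover $u-u_{h\tau}$ by a final triangle inequality at the cost of extra $\|u-v_{h\tau}\|^2$ terms that are already present in the target. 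Your jump bookkeeping (cancellation of $\sum_k\langle[\xi_{h\tau}]^{k-1},\xi_{h\tau\,+}^{k-1}\rangle$ against $\sum_k\langle[u_{h\tau}]^{k-1},\xi_{h\tau\,+}^{k-1}\rangle$ up to an initial-data remainder) is sound.

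There is, however, a concrete gap in the residual step. Writing $R=f-\partial_t u-Au$, your splitting $-\int\langle R,\xi_{h\tau}\rangle=-\int\langle R,v-u_{h\tau}\rangle-\int\langle R,v_{h\tau}-v\rangle$ followed by inserting $u$ produces the term $-\int\langle R,v-u\rangle$; the continuous inequality gives $\int\langle R,v-u\rangle\leq\int(j(v)-j(u))\,\dt$, which is an upper bound on $+\int\langle R,v-u\rangle$ and hence only a \emph{lower} bound on the term you must bound from above. In addition, the leftover pieces $\|R\|\,\|v_{h\tau}-v\|$ and $\|R\|\,\|u-u_{h\tau}\|$ are not of the form appearing in the lemma and cannot be absorbed without generating $\|R\|^2$, which the statement does not contain. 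The fix is to decompose $u_{h\tau}-v_{h\tau}=(u_{h\tau}-v)+(v-u)+(u-v_{h\tau})$, giving $-\int\langle R,\xi_{h\tau}\rangle\leq\|R\|\,\|u_{h\tau}-v\|+\int(j(v)-j(u))\,\dt+\|R\|\,\|u-v_{h\tau}\|$ with the variational inequality used in the correct direction. Note also that this computation, combined with the $j(v_{h\tau})-j(u_{h\tau})$ contribution from the discrete inequality and a Lipschitz correction $|j(u)-j(u_{h\tau})|\lesssim\|u-u_{h\tau}\|$, yields the friction terms in the standard Falk form $j(v)-j(u_{h\tau})$ and $j(v_{h\tau})-j(u)$; you should not attempt to ``juggle'' these into the reversed signs printed in the lemma, since that flip is not justified (the printed signs are harmless in applications, where one takes $v=u_{h\tau}$ and bounds the remaining difference by Lipschitz continuity, but your derivation cannot and need not reproduce them).
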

\begin{proof}
Adding together the continuous and discrete problems gives us
\begin{equation*}
\begin{split}
B_{DG}(u,u) +& B_{DG}(u_{h\tau},u_{h\tau}) + \sum_{k=1}^M \langle [u_{h\tau}]^{k-1}, {u_{h\tau}}^{k-1}_+\rangle \\ &\leq \int_0^T \langle f,u-v_{h\tau}\rangle + \langle f,u_{h\tau}-v\rangle \dt + B_{DG}(u,v)+B_{DG}(u_{h\tau},v_{h\tau})\\ & \qquad  + \sum_{k=1}^M \int_{I_k}  j(u)-j(v_{h\tau}) \dt +\int_{I_k}  j(u_{h\tau}) - j(v)  \dt \\ & \qquad + \sum_{k=1}^M \langle [u_{h\tau}]^{k-1}, {v_{h\tau}}^{k-1}_+\rangle +\langle u_0, (u_{h\tau}-v_{h\tau})^0_+ \rangle.
\end{split}
\end{equation*}
Subtracting the mixed terms $B_{DG}(u,u_{h\tau}) + B_{DG}(u_{h\tau},u) + \sum_{k=1}^M \langle [u_{h\tau}]^{k-1}, {u}^{k-1}_+\rangle$ and using the fact that the jump terms of the continuous problem are zero,

\begin{equation*}
\begin{split}
B_{DG}(u-u_{h\tau},u - u_{h\tau})& - \sum_{k=1}^M \langle [u-u_{h\tau}]^{k-1}, {u-u_{h\tau}}^{k-1}_+\rangle \\ &\leq \int_0^T \langle f,u-v_{h\tau}\rangle + \langle f,u_{h\tau}-v\rangle\dt - B_{DG}(u,u-v_{h\tau}) - B_{DG}(u,u_{h\tau}-v) \\&  + \sum_{k=1}^M \int_{I_k} j(u)-j(v_{h\tau}) \dt +\int_{I_k} j(u_{h\tau}) - j(v) \dt \\  &+ \sum_{k=1}^M \langle [u-u_{h\tau}]^{k-1}, {v_{h\tau}-u}^{k-1}_+\rangle + B_{DG}(u-u_{h\tau},u-v_{h\tau}).
\end{split}
\end{equation*}
Due to the coercivity of the left hand side by Lemma~\ref{l:coercive}, 
\begin{equation*}
\begin{split}
\|u-u_{h\tau} &\|^2_{L^2(0,T;H)} + \|(u-u_{h\tau})^0_+\|_{L^2(\Omega)}^2 + \sum_{k=1}^{M-1} \|[u-u_{h\tau}]^k\|_{L^2(\Omega)}^2 +\|(u-u_{h\tau})^M_-\|_{L^2(\Omega)}^2   \\ &\lesssim \|f-Au-\partial_t u\|_{L^2(0,T;H^*)} \left( \|u-v_{h\tau}\|_{L^2(0,T;H)} +\|u_{h\tau}-v\|_{L^2(0,T;H)} \right)\\ &  + \sum_{k=1}^M \int_{I_k}  j(u)-j(v_{h\tau}) \dt +\int_{I_k}  j(u_{h\tau}) - j(v) \dt \\
&+ \sum_{k=1}^M \int_{I_k} \langle -u + u_{h\tau}, \partial_t(u-v_{h\tau})\rangle + a(u-u_{h\tau},u-v_{h\tau}) \dt - \sum_{k=1}^M \langle (u-u_{h\tau})^k_-,[u-v_{h\tau}]^k \rangle.
\end{split}
\end{equation*}
We can choose $v_{h\tau} \in C(0,T;H)$ and so
\begin{equation*}
\begin{split}
\|u-u_{h\tau} &\|^2_{L^2(0,T;H)} + \|(u-u_{h\tau})^0_+\|_{L^2(\Omega)}^2 + \sum_{k=1}^{M-1} \|[u-u_{h\tau}]^k\|_{L^2(\Omega)}^2 +\|(u-u_{h\tau})^M_-\|_{L^2(\Omega)}^2  \\ &\lesssim \|f-Au-\partial_t u\|_{L^2(0,T;H^*)} \left( \|u-v_{h\tau}\|_{L^2(0,T;H)} +\|u_{h\tau}-v\|_{L^2(0,T;H)} \right)\\ &+ \| u - u_{h\tau}\|_{L^2(0,T;H)} (\| \partial_t(u-v_{h\tau})\|_{L^2(0,T;H^*)} + \|u-v_{h\tau}\|_{L^2(0,T;H)}) \\ &  + \sum_{k=1}^M \int_{I_k} j(u)-j(v_{h\tau}) \dt +\int_{I_k} j(u_{h\tau}) - j(v) \dt,
\end{split}
\end{equation*}
and applying Cauchy-Schwarz yields the result.
\end{proof}

\begin{rem}\label{rem:63}
In order to obtain explicit convergence rates for the discrete solution we would like to know the regularity of the solutions. 
In the case of the unconstrained problem, we know that if $f = 0$ and $u_0 \in H^s_0(\Omega)$, then the solution $u \in \mathbb{W}(0,T)$ of the parabolic problem
\begin{equation}
\int_0^T\langle \partial_t u, v\rangle + a(u,v) \dt = \int_0^T\langle f,v \rangle \dt, \;\; v \in L^2(0,T;H^s_0(\Omega)),
\end{equation}
satisfies $u \in L^2(0,T; H^s_0(\Omega) \cap H^{s+\ell}(\Omega))$, where $\ell = \min\lbrace 1/2-\varepsilon, s \rbrace$.
We refer to \cite{Brezis1} details related to the classical cases. \\
The regularity theory of the variational inequalities discussed here is less developed than for the local elliptic case of the Laplacian. This in particular applies to the thin obstacle and friction problems, and even for the standard parabolic obstacle problem regularity is only understood under strong hypotheses \cite{figalli}. See \cite{Salgado, Silvestre} for regularity results in the elliptic case and \cite{dh} for the challenges of optimal a priori estimates for the classical Signorini problem.
\end{rem}

\subsection{A posteriori error estimate for variational inequalities}
In this section we discuss a posteriori error estimates for the parabolic variational inequalities in Problems~\ref{p:para1} and \ref{p:para2}. As before, we assume that $f$ as well as the constraint belongs to the finite element space.\\
Since a posteriori estimates require the precise formulation of the problem to determine a fully computable bound, we restrict ourselves to Problems~\ref{prob:obst}--\ref{prob:friction}.\\
We begin by discussing the parabolic obstacle problem. We define the Lagrange multiplier $\sigma$ for the parabolic problem in the following fashion as in the elliptic case,
\begin{equation}
\langle \sigma, v  \rangle = \langle f, v \rangle - a(u, v) - \langle \partial_t u,v \rangle  ,\; \forall v \in L^2(0,T;H).
\end{equation}
Furthermore, we set the residual $r_{h\tau}$ of the parabolic problem in a similar fashion as for the elliptic problems,
\begin{equation}
r_{h\tau} = f - (-\Delta)^s u_{h\tau} - \partial_t u_{h\tau},
\end{equation}
and we define $\sigma_{h\tau} \in \mathbb{W}_{h\tau}(0,T)$ to be the discrete counterpart of $\sigma$ given by
\begin{equation}
\langle \sigma_{h\tau}, v_{h\tau}  \rangle = \langle f, v_{h\tau} \rangle - a(u_{h\tau}, v_{h\tau}) - \langle \partial_t u_{h\tau},v_{h\tau} \rangle  ,\; \forall v_{h\tau} \in \mathbb{W}_{h\tau}(0,T).
\end{equation}
We will restrict ourselves to the discussion of the piecewise constant discretization in time. However, generalisation of the arguments follows directly. To this end we consider a piecewise linear interpolant $\tilde{u}$ in time, defined by
\begin{equation}
\tilde{u}(t) = u_+^k +\frac{t_k-t}{\tau_k}(u_-^k - u_+^k),
\end{equation}
for all $t \in (t_{k-1},t_k]$. This allows us to carry out similar analysis as in the case of elliptic variational inequalities. We avoid unnecessary repetition of the arguments here and present the estimates directly.\\
\begin{thm}[Obstacle problem]\label{thm:para_apost_vi}
Let $u, u_{h\tau}$ be solutions of Problems~\ref{p:para1} and \ref{p:disc_para1}, respectively, associated to the parabolic version of obstacle problem~\ref{prob:obst}. Assume that $f \in \mathbb{W}_{h\tau}(0,T)$ and $\chi \in \mathbb{V}_h$. 
Then the following computable abstract error estimate holds
\begin{equation}
\begin{split}
\|(u-u_{h\tau})&(T)\|^2_{L^2(\Omega)} + \int_0^T \|u-u_{h\tau}\|^2_{H_0^s(\Omega)} + \|\partial_t(u-u_{h\tau})\|^2_{H^{-s}(\Omega)} + \|\sigma-\sigma_{h\tau}\|^2_{H^{-s}(\Omega)} \dt\\ & \lesssim  \|(u-u_{h\tau})(0)\|^2_{L^2(\Omega)} + \int_0^T \|\tilde{u}_{h\tau}-u_{h\tau}\|^2_{H_0^s(\Omega)}+  \|r_{h\tau} - \sigma_{h\tau}\|_{H^{-s}(\Omega)}^2 \dt\\ &\qquad -\int_0^T \langle \sigma_{h\tau}, u_{h\tau}-\chi \rangle \dt.
\end{split}
\end{equation}
\end{thm}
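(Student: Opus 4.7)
The proof follows the pattern of Lemma~\ref{l:Apost_obst}, with the two new ingredients being the time derivative and the time-discontinuity of the DG approximation $u_{h\tau}$. First, subtracting the definitions of $\sigma$ and $\sigma_{h\tau}$ produces the parabolic residual identity
\begin{equation*}
\langle \partial_t(u-u_{h\tau}),v \rangle + a(u-u_{h\tau},v) + \langle \sigma-\sigma_{h\tau}, v\rangle = \langle r_{h\tau}-\sigma_{h\tau}, v\rangle,
\end{equation*}
valid for every $v \in L^2(0,T; H^s_0(\Omega))$.

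I then test with $v = u - \tilde{u}_{h\tau}$, where $\tilde{u}_{h\tau}$ is the continuous, piecewise-linear-in-time reconstruction introduced just before the statement. Continuity of $u-\tilde{u}_{h\tau}$ permits the use of the identity $\langle \partial_t w, w\rangle = \tfrac{1}{2}\partial_t\|w\|^2_{L^2(\Omega)}$. Splitting $u-u_{h\tau} = (u-\tilde{u}_{h\tau}) + (\tilde{u}_{h\tau}-u_{h\tau})$ in the coercivity and continuity estimates for $a(\cdot,\cdot)$, integrating from $0$ to $T$, and applying Young's inequality to absorb cross terms into the coercive contribution, one reaches
\begin{equation*}
\begin{split}
\|(u-u_{h\tau})(T)\|^2_{L^2(\Omega)} &+ \int_0^T \|u-u_{h\tau}\|^2_{H^s_0(\Omega)} \dt \\
\lesssim~& \|(u-u_{h\tau})(0)\|^2_{L^2(\Omega)} + \int_0^T \bigl\{\|r_{h\tau}-\sigma_{h\tau}\|^2_{H^{-s}(\Omega)} + \|\tilde{u}_{h\tau}-u_{h\tau}\|^2_{H^s_0(\Omega)}\bigr\} \dt \\
& - \int_0^T \langle \sigma - \sigma_{h\tau}, u-u_{h\tau}\rangle \dt.
\end{split}
\end{equation*}

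The contact contribution is then handled pointwise in $t$ exactly as in Lemma~\ref{l:Apost_obst}: by complementarity $\sigma(u-\chi)=0$, together with $\sigma \leq 0$, $u \geq \chi$, $u_{h\tau}\geq\chi$, one has $\langle \sigma, u-u_{h\tau}\rangle \geq 0$, while $\sigma_{h\tau} \leq 0$ and $u\geq \chi$ yield $\langle \sigma_{h\tau}, u-u_{h\tau}\rangle \leq -\langle \sigma_{h\tau}, u_{h\tau}-\chi\rangle$. Together these produce $-\int_0^T \langle \sigma-\sigma_{h\tau}, u-u_{h\tau}\rangle \dt \leq -\int_0^T \langle \sigma_{h\tau}, u_{h\tau}-\chi\rangle \dt$, which matches the last term on the right-hand side of the assertion. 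The two remaining dual norms are obtained from the residual identity itself: dualising it against an arbitrary $v \in H^s_0(\Omega)$ yields
\begin{equation*}
\|\partial_t(u-u_{h\tau})\|_{H^{-s}(\Omega)} + \|\sigma-\sigma_{h\tau}\|_{H^{-s}(\Omega)} \lesssim \|r_{h\tau}-\sigma_{h\tau}\|_{H^{-s}(\Omega)} + \|u-u_{h\tau}\|_{H^s_0(\Omega)},
\end{equation*}
so that the already-established space-time control of $\|u-u_{h\tau}\|_{H^s_0(\Omega)}$ closes the estimate.

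The main obstacle I anticipate is the careful replacement of the discontinuous $u_{h\tau}$ by $\tilde{u}_{h\tau}$: one must verify that the coercivity argument applied to $u-\tilde{u}_{h\tau}$, combined with the triangle inequality to recover $u-u_{h\tau}$, generates only the $\int_0^T \|\tilde{u}_{h\tau}-u_{h\tau}\|^2_{H^s_0(\Omega)} \dt$ penalty and no uncontrolled DG jump terms, and, on the discrete side, that the sign information on $\sigma_{h\tau}$ needed to bound $\langle \sigma_{h\tau}, u_{h\tau}-\chi\rangle$ is indeed available on each time slab from Problem~\ref{p:disc_para1}.
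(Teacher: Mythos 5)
The paper does not actually write out a proof of Theorem~\ref{thm:para_apost_vi}: just before the statement it introduces the piecewise-linear-in-time reconstruction $\tilde{u}$ and says the result follows by "similar analysis as in the case of elliptic variational inequalities", i.e.\ precisely the route you take (residual identity from the definitions of $\sigma$, $\sigma_{h\tau}$ and $r_{h\tau}$, testing with $u-\tilde{u}_{h\tau}$, and reusing the complementarity argument of Lemma~\ref{l:Apost_obst} for the contact term). So your strategy is the intended one, and your treatment of the contact term $-\langle\sigma-\sigma_{h\tau},u-u_{h\tau}\rangle$ is exactly the paper's elliptic argument transplanted pointwise in time.

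Two points deserve scrutiny. First, your final duality step does not deliver what you claim: testing the residual identity against arbitrary $v\in H^s_0(\Omega)$ controls only the \emph{combined} quantity $\|\partial_t(u-u_{h\tau})+(\sigma-\sigma_{h\tau})\|_{H^{-s}(\Omega)}$, not the two dual norms separately; splitting it into $\|\partial_t(u-u_{h\tau})\|_{H^{-s}}+\|\sigma-\sigma_{h\tau}\|_{H^{-s}}$ requires an extra argument you do not supply. This is consistent with the paper's own mixed-formulation analogue, Theorem~\ref{l:para_a}, whose left-hand side deliberately keeps the sum $\|\partial_t(u-\mathcal{U})+(\lambda-\lambda_{H\tau})\|^2_{H^*}$; so the gap is as much in the theorem's statement as in your proof, but you should not present the separation as a consequence of duality alone. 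Second, the cross term $\int_{I_k}\langle\partial_t(\tilde{u}_{h\tau}-u_{h\tau}),u-\tilde{u}_{h\tau}\rangle\dt$, which appears when you replace $\partial_t(u-u_{h\tau})$ by $\partial_t(u-\tilde{u}_{h\tau})$ to telescope the $L^2$ norm, is of the size of the DG jumps $\langle[u_{h\tau}]^{k-1},\cdot\rangle$ and is \emph{not} absorbed by $\int_{I_k}\|\tilde{u}_{h\tau}-u_{h\tau}\|^2_{H^s_0}\dt$ without an inverse estimate or a CFL-type relation $\tau_k\sim h^{2s}$; you correctly flag this as the delicate point, but the proof is not complete until that term is either moved into the residual (as the paper does in Theorem~\ref{l:para_a} by defining $r_{h\tau}$ with $\partial_t\mathcal{U}$ rather than $\partial_t u_{h\tau}$) or explicitly bounded.
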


\begin{thm}[Signorini problem]\label{thm:para_apost_vi_sign}
Let $u, u_{h\tau}$ be solutions of problems in Problems~\ref{p:para1} and \ref{p:disc_para1}, respectively, associated to the parabolic version of Signorini problem~\ref{prob:contact}. Assume that $f \in \mathbb{W}_{h\tau}(0,T)$ and $g \in \mathbb{V}_h$. 
Then the following computable abstract error estimate holds
\begin{equation}
\begin{split}
\|(u-u_{h\tau})&(T)\|^2_{L^2(\Omega)} + \int_0^T \|u-u_{h\tau}\|^2_{H_{\widetilde{\Gamma}^C}^s(\Omega)} + \|\partial_t(u-u_{h\tau})\|^2_{H^{-s}(\Omega)} + \|\sigma-\sigma_{h\tau}\|^2_{H^{-s+\frac{1}{2}}(\widetilde{\Gamma})} \dt\\ & \lesssim  \|(u-u_{h\tau})(0)\|^2_{L^2(\Omega)} + \int_0^T \|\tilde{u}_{h\tau}-u_{h\tau}\|^2_{H_{\widetilde{\Gamma}^C}^s(\Omega)}+  \|r_{h\tau} - \sigma_{h\tau}\|_{H^{-s}(\Omega)}^2 \dt\\ &\qquad -\int_0^T \langle \sigma_{h\tau}, u_{h\tau}-g \rangle \dt.
\end{split}
\end{equation}
\end{thm}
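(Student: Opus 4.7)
The plan is to mirror the argument of Theorem~\ref{thm:para_apost_vi} for the parabolic obstacle problem, replacing the interior complementarity $\langle \sigma, u-\chi\rangle = 0$ by the boundary complementarity $\langle \sigma, u|_{\widetilde{\Gamma}}-g\rangle = 0$ with $\sigma$ supported on $\widetilde{\Gamma}$, as established by the lemma at the end of Section~\ref{s:VI}. The starting point is the parabolic residual identity obtained by subtracting the defining equations for $\sigma$ and $\sigma_{h\tau}$,
\begin{equation*}
\langle \partial_t(u-u_{h\tau}), v\rangle + a(u-u_{h\tau}, v) + \langle \sigma-\sigma_{h\tau}, v\rangle = \langle r_{h\tau}-\sigma_{h\tau}, v\rangle,
\end{equation*}
valid for all $v \in H^s_{\widetilde{\Gamma}^C}(\Omega)$, which is the source of every subsequent estimate.

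First I would introduce the continuous-in-time piecewise linear interpolant $\tilde u_{h\tau}$ of $u_{h\tau}$ (as in the obstacle case) and test the identity with $v = u - \tilde u_{h\tau}$, integrating over $(0,T)$. Splitting $\partial_t(u-u_{h\tau}) = \partial_t(u-\tilde u_{h\tau}) + \partial_t(\tilde u_{h\tau}-u_{h\tau})$ and integrating by parts on the first piece yields the $L^2$-endpoint contribution $\tfrac12\|(u-u_{h\tau})(T)\|^2_{L^2(\Omega)} - \tfrac12\|(u-u_{h\tau})(0)\|^2_{L^2(\Omega)}$, using $\tilde u_{h\tau}(T)=u_{h\tau}(T)$; the second piece, which carries the DG-in-time jumps, contributes terms controlled by $\int_0^T\|\tilde u_{h\tau}-u_{h\tau}\|^2_{H^s_{\widetilde{\Gamma}^C}(\Omega)}\dt$ after Young's inequality. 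Coercivity of $a$ on $H^s_{\widetilde{\Gamma}^C}(\Omega)$ then produces $\int_0^T\|u-u_{h\tau}\|^2_{H^s_{\widetilde{\Gamma}^C}(\Omega)}\dt$ on the left-hand side, while the $H^{-s}(\Omega)$-norm of $\partial_t(u-u_{h\tau})$ and the $H^{-s+\frac12}(\widetilde{\Gamma})$-norm of $\sigma-\sigma_{h\tau}$ are recovered by taking the supremum in the residual identity, exploiting the trace theorem $H^s_{\widetilde{\Gamma}^C}(\Omega)\to H^{s-\frac12}(\widetilde{\Gamma})$ together with the fact that $\sigma-\sigma_{h\tau}$ is supported on $\widetilde{\Gamma}$ to convert an $H^{-s}(\Omega)$-duality bound into the desired boundary-trace norm.

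The decisive Signorini-specific step is bounding $-\int_0^T\langle\sigma-\sigma_{h\tau}, u-u_{h\tau}\rangle\dt$. Pointwise in $t$, using $\langle \sigma, u|_{\widetilde{\Gamma}} - g\rangle = 0$, $u|_{\widetilde{\Gamma}} \geq g$, the sign of $\sigma$ against nonnegative boundary traces, and $u_{h\tau}|_{\widetilde{\Gamma}} \geq g$ (since $u_{h\tau}\in K_{sh}$), one obtains exactly as in the elliptic Signorini remark following Lemma~\ref{l:Apost_obst}
\begin{equation*}
\langle\sigma-\sigma_{h\tau}, u-u_{h\tau}\rangle \leq \langle\sigma_{h\tau}, g-u_{h\tau}|_{\widetilde{\Gamma}}\rangle,
\end{equation*}
which after integration in time is the contact term on the right-hand side of the claimed estimate. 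The main obstacle I expect is the bookkeeping of the DG-in-time jumps: the substitution of $\tilde u_{h\tau}$ for $u_{h\tau}$ must be arranged so that the commutator $\tilde u_{h\tau}-u_{h\tau}$ stays confined to the quantitatively controlled perturbation term and does not interfere with the unilateral contact bound on $\widetilde{\Gamma}$, nor corrupt the recovered trace norm of $\sigma-\sigma_{h\tau}$.
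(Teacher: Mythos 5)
Your proposal follows exactly the route the paper intends: the paper gives no separate proof of Theorem~\ref{thm:para_apost_vi_sign}, stating only that the interpolant $\tilde u_{h\tau}$ allows one to "carry out similar analysis as in the case of elliptic variational inequalities," i.e.\ the parabolic obstacle argument combined with the boundary complementarity bound from the remark following Lemma~\ref{l:Apost_obst}, which is precisely what you do. One small slip: the displayed inequality in your last paragraph should read $\langle\sigma_{h\tau}-\sigma,\,u-u_{h\tau}\rangle \leq \langle\sigma_{h\tau},\,g-u_{h\tau}|_{\widetilde\Gamma}\rangle$ (as in the paper's remark), since the quantity you need to bound from above is $-\langle\sigma-\sigma_{h\tau},u-u_{h\tau}\rangle$, not $+\langle\sigma-\sigma_{h\tau},u-u_{h\tau}\rangle$.
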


\begin{thm}[Interior friction problem]\label{thm:para_apost_vi_int_fric}
Let $u, u_{h\tau}$ be solutions of Problems~\ref{p:para2} and \ref{p:disc_para2}, respectively, associated to the parabolic version of interior friction problem~\ref{prob:int_friction}. Assume that $f \in \mathbb{W}_{h\tau}(0,T)$. 
Then the following computable abstract error estimate holds
\begin{equation}
\begin{split}
\|(u-u_{h\tau})&(T)\|^2_{L^2(\Omega)} + \int_0^T \|u-u_{h\tau}\|^2_{H_0^s(\Omega)} + \|\partial_t(u-u_{h\tau})\|^2_{H^{-s}(\Omega)} + \|\sigma-\sigma_{h\tau}\|^2_{H^{-s}(\Omega)} \dt\\ & \lesssim  \|(u-u_{h\tau})(0)\|^2_{L^2(\Omega)} + \int_0^T \|\tilde{u}_{h\tau}-u_{h\tau}\|^2_{H_0^s(\Omega)}+  \|r_{h\tau} - \sigma_{h\tau}\|_{H^{-s}(\Omega)}^2 \dt\\ &\qquad +\int_0^T \|(|\sigma_h|-\mathcal{F})^+\|_{H^{-s}(\Omega)}^2 + \langle (|\sigma_h|-\mathcal{F})^- , |u_h|\rangle - \langle \sigma_h,u_h\rangle + \langle |\sigma_h|,|u_h|\rangle \dt.
\end{split}
\end{equation}
\end{thm}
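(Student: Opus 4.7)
The plan is to parallel the elliptic interior-friction lemma combined with the discontinuous-Galerkin-in-time technique implicit in Theorem~\ref{thm:para_apost_vi}. First I would subtract the defining relations of $\sigma$ and $\sigma_{h\tau}$ to obtain the time-dependent residual identity
\begin{equation*}
a(u-u_{h\tau}, v) + \langle \partial_t(u - u_{h\tau}), v\rangle + \langle \sigma - \sigma_{h\tau}, v\rangle = \langle r_{h\tau} - \sigma_{h\tau}, v\rangle
\end{equation*}
for every admissible $v$. The natural test is $v = u - \tilde{u}_{h\tau}$, since the piecewise linear interpolant $\tilde{u}_{h\tau}$ is continuous in time and integration by parts of $\int_0^T\langle \partial_t(u-u_{h\tau}),\, u-\tilde{u}_{h\tau}\rangle\dt$ yields the boundary contributions $\tfrac{1}{2}\|(u-u_{h\tau})(T)\|_{L^2(\Omega)}^2$ and $-\tfrac{1}{2}\|(u-u_{h\tau})(0)\|_{L^2(\Omega)}^2$. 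Writing $u-\tilde{u}_{h\tau} = (u-u_{h\tau}) + (u_{h\tau}-\tilde{u}_{h\tau})$ and invoking coercivity of $a(\cdot,\cdot)$ together with Young's inequality produces the $\int_0^T\|u-u_{h\tau}\|_{H^s_0(\Omega)}^2\dt$ term on the left and the $\int_0^T\|\tilde{u}_{h\tau}-u_{h\tau}\|_{H^s_0(\Omega)}^2\dt$ indicator on the right.

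Next, the friction-specific control of $\int_0^T\langle \sigma_{h\tau}-\sigma, u_{h\tau}-u\rangle\dt$ is obtained by replicating, pointwise in $t$, the chain of bounds from the elliptic interior-friction lemma. Using the complementarity identities $\langle \sigma(t), u(t)\rangle = \langle \mathcal{F}, |u(t)|\rangle$ and $\langle \sigma(t), u_{h\tau}(t)\rangle \leq \langle \mathcal{F}, |u_{h\tau}(t)|\rangle$ for almost every $t$, one obtains
\begin{equation*}
\langle \sigma_{h\tau}-\sigma, u_{h\tau}-u\rangle \leq \|(|\sigma_{h\tau}|-\mathcal{F})^+\|_{H^{-s}(\Omega)}\|u-u_{h\tau}\|_{H^s_0(\Omega)} + \langle (|\sigma_{h\tau}|-\mathcal{F})^-, |u_{h\tau}|\rangle - \langle \sigma_{h\tau}, u_{h\tau}\rangle + \langle |\sigma_{h\tau}|, |u_{h\tau}|\rangle.
\end{equation*}
Integrating in time and one further application of Young's inequality absorbs the $\|u-u_{h\tau}\|_{H^s_0(\Omega)}$ factor into the left-hand side and contributes precisely the three friction terms that appear in the assertion.

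Finally, I would recover the dual norms $\|\sigma-\sigma_{h\tau}\|_{H^{-s}(\Omega)}$ and $\|\partial_t(u-u_{h\tau})\|_{H^{-s}(\Omega)}$ from the same residual identity: at almost every $t$, testing against arbitrary $v \in H^s_0(\Omega)$, using continuity of $a(\cdot,\cdot)$, and taking the supremum over $\|v\|_{H^s_0(\Omega)} = 1$ bounds each dual norm pointwise by $\|r_{h\tau}-\sigma_{h\tau}\|_{H^{-s}(\Omega)} + \|u-u_{h\tau}\|_{H^s_0(\Omega)}$, up to a constant; squaring and integrating in time then produces exactly the announced right-hand side. The main obstacle will be careful bookkeeping of the DG-in-time structure: since $u_{h\tau}$ is generally discontinuous across the nodes $t_k$ whereas $u$ is not, the integration by parts produces boundary contributions at each $t_k$ that must be reorganised through $\tilde{u}_{h\tau}$ so that they collapse into the single $\|\tilde{u}_{h\tau}-u_{h\tau}\|_{H^s_0(\Omega)}$ indicator rather than explicit jump estimators, in the same spirit as Lemma~\ref{l:coercive} absorbs the jump terms into the coercivity of $B_{DG}$.
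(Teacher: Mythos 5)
Your plan is essentially the paper's intended argument: the paper omits the proof of Theorem~\ref{thm:para_apost_vi_int_fric}, stating only that one combines the elliptic interior-friction estimate with the piecewise linear time-reconstruction $\tilde{u}$, and your residual identity, the test function $u-\tilde{u}_{h\tau}$, the splitting $u-\tilde{u}_{h\tau}=(u-u_{h\tau})+(u_{h\tau}-\tilde{u}_{h\tau})$, and the pointwise-in-time reuse of the complementarity bounds $\langle \sigma,u\rangle=\langle\mathcal{F},|u|\rangle$, $\langle\sigma,u_{h\tau}\rangle\leq\langle\mathcal{F},|u_{h\tau}|\rangle$ reproduce exactly that route. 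The one step that does not follow as written is the final duality argument: testing the residual identity against arbitrary $v\in H^s_0(\Omega)$ controls only the \emph{combined} norm $\|\partial_t(u-u_{h\tau})+(\sigma-\sigma_{h\tau})\|_{H^{-s}(\Omega)}$ by $\|r_{h\tau}-\sigma_{h\tau}\|_{H^{-s}(\Omega)}+\|u-u_{h\tau}\|_{H^s_0(\Omega)}$, not each summand separately -- the triangle inequality goes the wrong way for splitting them. This is precisely why the paper's mixed-formulation analogue, Theorem~\ref{l:para_a}, keeps the sum $\partial_t(u-\mathcal{U})+(\lambda-\lambda_{H\tau})$ inside a single dual norm on the left-hand side; to obtain the two norms individually, as the statement of Theorem~\ref{thm:para_apost_vi_int_fric} asserts, you would need an additional ingredient (e.g.\ an inf-sup-type argument isolating $\sigma-\sigma_{h\tau}$), which your proposal does not supply.
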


\begin{thm}[Friction problem]\label{thm:para_apost_vi_fric}
Let $u, u_{h\tau}$ be solutions of Problems~\ref{p:para2} and \ref{p:disc_para2}, respectively, asssociated to the parabolic version of friction problem~\ref{prob:friction}. Assume that $f \in \mathbb{W}_{h\tau}(0,T)$. 
Then the following computable abstract error estimate holds
\begin{equation}
\begin{split}
\|(u-u_{h\tau})&(T)\|^2_{L^2(\Omega)} + \int_0^T \|u-u_{h\tau}\|^2_{H_{\widetilde{\Gamma}^C}^s(\Omega)} + \|\partial_t(u-u_{h\tau})\|^2_{H^{-s}(\Omega)} + \|\sigma-\sigma_{h\tau}\|^2_{H^{-s+\frac{1}{2}}(\widetilde{\Gamma})} \dt\\ & \lesssim  \|(u-u_{h\tau})(0)\|^2_{L^2(\Omega)} + \int_0^T \|\tilde{u}_{h\tau}-u_{h\tau}\|^2_{H_{\widetilde{\Gamma}^C}^s(\Omega)}+  \|r_{h\tau} - \sigma_{h\tau}\|_{H^{-s}(\Omega)}^2 \dt\\ &\qquad +\int_0^T \|(|\sigma_h|-\mathcal{F})^+\|_{H^{-s+\frac{1}{2}}(\widetilde{\Gamma})}^2 + \langle (|\sigma_h|-\mathcal{F})^- , |u_h|_{\widetilde{\Gamma}}|\rangle - \langle \sigma_h,u_h|_{\widetilde{\Gamma}}\rangle + \langle |\sigma_h|,|u_h|_{\widetilde{\Gamma}}|\rangle \dt.
\end{split}
\end{equation}
\end{thm}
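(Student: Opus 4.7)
The plan is to mirror the proof of the interior friction estimate, Theorem~\ref{thm:para_apost_vi_int_fric}, with the only substantive change being that the contact data now lives on the codimension-one set $\widetilde{\Gamma}$. I would begin by defining the continuous and discrete Lagrange multipliers
\[
\langle \sigma,v\rangle = \langle f,v\rangle - a(u,v) - \langle \partial_t u,v\rangle,\quad \langle \sigma_{h\tau},v_{h\tau}\rangle = \langle f,v_{h\tau}\rangle - a(u_{h\tau},v_{h\tau}) - \langle \partial_t u_{h\tau},v_{h\tau}\rangle,
\]
noting that as in Problem~\ref{pr:friction} the lemma at the end of Section~\ref{s:VI} forces $\sigma\in H^{-s+\frac12}(\widetilde{\Gamma})$. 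Subtracting yields the residual identity
\[
a(u-u_{h\tau},v) + \langle \partial_t(u-u_{h\tau}),v\rangle + \langle \sigma-\sigma_{h\tau},v\rangle = \langle r_{h\tau}-\sigma_{h\tau},v\rangle.
\]

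Next I would test with $v=u-u_{h\tau}$ and integrate over $(0,T)$. Coercivity of $a(\cdot,\cdot)$ on $H^s_{\widetilde{\Gamma}^C}(\Omega)$ controls the spatial energy norm, while the $\langle \partial_t(u-u_{h\tau}),u-u_{h\tau}\rangle$ integral telescopes via the fundamental theorem of calculus into $\tfrac12\|(u-u_{h\tau})(T)\|_{L^2}^2 - \tfrac12\|(u-u_{h\tau})(0)\|_{L^2}^2$. Since $u_{h\tau}$ is piecewise constant, the time derivative is meaningful only after switching to the piecewise linear reconstruction $\tilde u_{h\tau}$; the resulting correction $\partial_t u_{h\tau}\to\partial_t\tilde u_{h\tau}$ generates exactly the $\|\tilde{u}_{h\tau}-u_{h\tau}\|^2_{H^s_{\widetilde{\Gamma}^C}(\Omega)}$ term on the right, just as in Theorem~\ref{thm:para_apost_vi_int_fric}.

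The core friction bound is then obtained pointwise in $t$ by repeating case (iv) of the elliptic Lemma~\ref{l:b} and the proof of the elliptic friction a posteriori estimate. Using $\langle \sigma, u\rangle_{\widetilde{\Gamma}} = \langle\mathcal{F},|u|\rangle_{\widetilde{\Gamma}}$ and $\langle \sigma, u_{h\tau}\rangle_{\widetilde{\Gamma}} \leq \langle\mathcal{F},|u_{h\tau}|\rangle_{\widetilde{\Gamma}}$, I would split $|\sigma_{h\tau}|-\mathcal{F} = (|\sigma_{h\tau}|-\mathcal{F})^+ - (|\sigma_{h\tau}|-\mathcal{F})^-$. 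The positive part couples with $|u-u_{h\tau}|_{\widetilde{\Gamma}}|$, which I bound via the trace $H^{s-\frac12}(\widetilde{\Gamma})$ estimate and Young's inequality to produce the $\|(|\sigma_h|-\mathcal{F})^+\|^2_{H^{-s+\frac12}(\widetilde{\Gamma})}$ contribution (absorbing a fraction of $\|u-u_{h\tau}\|^2_{H^s_{\widetilde{\Gamma}^C}(\Omega)}$ into the left side), while the remaining boundary pairings produce the three explicit contact terms verbatim. Finally, the dual norms $\|\partial_t(u-u_{h\tau})\|_{H^{-s}(\Omega)}$ and $\|\sigma-\sigma_{h\tau}\|_{H^{-s+\frac12}(\widetilde{\Gamma})}$ are recovered by testing the residual identity against arbitrary $v$, using the inf-sup / trace continuity, and invoking the previously obtained bounds on the energy norm and on $\|r_{h\tau}-\sigma_{h\tau}\|_{H^{-s}(\Omega)}$.

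The main obstacle is the bookkeeping for the time discontinuities combined with the low regularity of $\sigma$ on $\widetilde{\Gamma}$: because $\sigma$ is a distribution concentrated on the trace set, every duality pairing involving $\sigma-\sigma_{h\tau}$ must be controlled in the correct trace norm $H^{-s+\frac12}(\widetilde{\Gamma})$, and all estimates must be compatible with the piecewise constant in time character of $u_{h\tau}$. Replacing $u_{h\tau}$ by $\tilde u_{h\tau}$ only on those terms where differentiation is required, while keeping $u_{h\tau}$ in the contact inequalities, is the critical step, and is the point at which the $\|\tilde u_{h\tau}-u_{h\tau}\|^2_{H^s_{\widetilde{\Gamma}^C}(\Omega)}$ term enters; once that substitution is done carefully, the rest of the computation is a direct combination of the elliptic friction argument and the parabolic obstacle argument.
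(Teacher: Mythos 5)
Your proposal is correct and follows essentially the same route as the paper, which omits an explicit proof and instead notes that one introduces $\sigma$, $\sigma_{h\tau}$, the residual $r_{h\tau}$ and the piecewise linear reconstruction $\tilde u_{h\tau}$ and then repeats the elliptic friction analysis. Your treatment of the time derivative via $\tilde u_{h\tau}$ (producing the $\|\tilde u_{h\tau}-u_{h\tau}\|^2_{H^s_{\widetilde{\Gamma}^C}(\Omega)}$ term) and of the boundary contact terms via the trace-norm version of the elliptic friction estimate is exactly the intended argument.
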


\subsection{Mixed formulation of the parabolic problems}
Similarly as for the elliptic problem, it proves to be useful to impose the constraint condition indirectly. Thus we reformulate the variational inequality into a mixed formulation. The Lagrange multipliers $\lambda$ provide a measure to what extend is the equality violated. Note that both Problems~\ref{p:para1} and \ref{p:para2} are covered by this reformulation. We present results for parabolic version of Problems~\ref{prob:obst}--\ref{prob:friction}.\\

Let $f\in H^*$ and $w \in H$. Let $a(\cdot,\cdot)$ be the bilinear form associated with the fractional Laplacian and let $b(\mu,v)$ be a continuous bilinear form defined analogously as for the elliptic problems. We define the continuous and discrete mixed formulation in the following way:\\
\begin{problem}\label{p:para_mixed}
Find $(u,\lambda) \in \mathbb{W}(0,T) \times \widetilde{\Lambda}$ such that,
\begin{align}
\begin{cases}
a)~\int_0^T \langle \partial_t u, v \rangle + a(u,v) + b(\lambda, v) \mathrm{d}t &= \int_0^T \langle f, v \rangle \mathrm{d}t, \\
b)~\int_0^T b(\mu - \lambda, u) \mathrm{d}t &\leq \int_0^T \langle \mu-\lambda, w \rangle \mathrm{d}t, 
\end{cases}
\end{align}
for all $v \in \mathbb{W}(0,T)$ and $\mu \in \widetilde{\Lambda}$.
\end{problem}
\begin{thm}
Let $f \in H^{*}$, $\chi \in H^s_0(\Omega)$, $g \in H^{s-\frac{1}{2}} (\widetilde{\Gamma})$, $j_S: H^{s}_0(\Omega) \to \mathbb{R}$ and $j_{\widetilde{\Gamma}}: H^{s}_{\widetilde{\Gamma}}(\Omega) \to \mathbb{R}$  be a convex lower semi-continuous functionals defined in \eqref{e:j's}. \\
Suppose that $\widetilde{\Lambda}$ and $w$ in Problem~\ref{p:para_mixed} is given by:
\begin{align*}
(i)\; &\widetilde{\Lambda}_o = \lbrace \mu \in \mathbb{W}^*(0,T): \forall v \in \mathbb{W}(0,T),\; v \leq 0,\; \langle \mu,v \rangle \geq 0\; \mathrm{a.e.}\; t \in (0,T) \rbrace, \; w = \chi,\\
(ii)\; &\widetilde{\Lambda}_s = \lbrace \mu \in L^2(0,T; H^{\frac{1}{2}-s}(\widetilde{\Gamma})): \forall v \in L^2(0,T;H^{s-\frac{1}{2}}(\widetilde{\Gamma})),\; v \leq 0,\; \langle \mu,v \rangle \geq 0,\; \mathrm{a.e.}\; t \in (0,T) \rbrace, \; w = g, \\
(iii)\; &\widetilde{\Lambda}_I = \lbrace \mu \in \mathbb{W}^*(0,T): \forall v \in \mathbb{W}(0,T),\; \langle \mu, v \rangle \leq \langle \mathcal{F}, |v|\rangle \; \mathrm{a.e.}\; t \in (0,T) \rbrace,\; w = 0,\\
(iv)\; &\widetilde{\Lambda}_{\widetilde{\Gamma}} = \lbrace \mu \in L^2(0,T;H^{\frac{1}{2}-s}(\widetilde{\Gamma})): \forall v \in L^2(0,T;H^{s-\frac{1}{2}}(\widetilde{\Gamma})),\; \langle \mu, v \rangle \leq \langle \mathcal{F}, |v|\rangle,\; \mathrm{a.e.}\; t \in (0,T) \rbrace,\; w = 0.
\end{align*}
Then the variational inequality formulation in Problem~\ref{p:para1}, respectively \ref{p:para2} is equivalent to Problem~\ref{p:para_mixed}.
\end{thm}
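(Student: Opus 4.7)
The plan is to mirror the structure of the proof of Theorem~\ref{t:ell_equivalence} (the elliptic case), time-integrated and combined with the density of test functions in the Bochner space $\mathbb{W}(0,T)$. As in the elliptic setting, the four cases split into two families: the obstacle-type cases (i)-(ii), where the constraint is encoded through the positive cone $\widetilde{\Lambda}_o$, $\widetilde{\Lambda}_s$, and the friction-type cases (iii)-(iv), where the Lipschitz functional $j_S$ (respectively $j_{\widetilde{\Gamma}}$) is replaced by its dual representation $j(v) = \sup_{\mu \in \widetilde{\Lambda}_I} b(\mu,v)$ (respectively with $\widetilde{\Lambda}_{\widetilde\Gamma}$). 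In each case I will define the Lagrange multiplier pointwise in time by $\lambda(t) = f(t)-(-\Delta)^s u(t) - \partial_t u(t)\in H^{*}$ and check that it lies in the appropriate convex set $\widetilde\Lambda$.

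First I would reformulate Problem~\ref{p:para1} in the form analogous to \eqref{contVIequiv}, namely that $u\in\mathbb{W}_K(0,T)$ solves Problem~\ref{p:para1} if and only if
\begin{align*}
\int_0^T \langle \partial_t u,u\rangle + a(u,u)\dt &= \int_0^T \langle f,u\rangle\dt,\\
\int_0^T \langle \partial_t u,v\rangle + a(u,v)\dt &\geq \int_0^T \langle f,v\rangle\dt,\quad v\in\mathbb{W}_K(0,T),
\end{align*}
by the standard trick $v\mapsto 2u,0$ in the variational inequality. Next, as in the elliptic argument, the constraint (b) of Problem~\ref{p:para_mixed} is equivalent to
\begin{equation*}
u(t)\in K \text{ a.e.\ }t,\qquad \int_0^T b(\lambda,u)\dt = \int_0^T \langle \lambda,w\rangle\dt,
\end{equation*}
the two implications being obtained by testing with $\mu=0$ and $\mu=2\lambda$ and the fact that $\widetilde\Lambda$ is a cone shifted by $w$ in the appropriate sense. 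The forward direction VI $\Rightarrow$ mixed then proceeds by setting $\lambda = f-(-\Delta)^s u - \partial_t u$, verifying $\lambda\in\widetilde\Lambda$ from the reformulated inequality above, and using the $\hat u$-trick from the elliptic proof to establish (\ref{p:para_mixed}b). The reverse direction mixed $\Rightarrow$ VI is immediate from $b(\lambda, v-u)\leq \langle\lambda,w\rangle - b(\lambda,u) = 0$ in cases (i)-(ii) and from $j(v)\geq b(\lambda,v)$ with $j(u)=b(\lambda,u)$ in cases (iii)-(iv).

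The main obstacle is technical rather than conceptual: justifying the pointwise-in-time identifications. The equality $b(\lambda,u) = j(u)$ used in the friction cases must be read almost everywhere in $t$, and the pairing $\int_0^T\langle\partial_t u,u\rangle\dt = \tfrac12\|u(T)\|_{L^2}^2-\tfrac12\|u_0\|_{L^2}^2$ has to be used instead of the algebraic identity from the elliptic proof when substituting $v=2u$ or $v=0$; this is where the continuous embedding $\mathbb{W}(0,T)\hookrightarrow C^0(0,T;L^2(\Omega))$ intervenes. Second, the definition $\lambda = f-(-\Delta)^s u - \partial_t u$ a priori only gives $\lambda\in L^2(0,T;H^{*})$, so one must verify that the resulting functional lies in the dual space appearing in $\widetilde\Lambda_s$ or $\widetilde\Lambda_{\widetilde\Gamma}$, which for the thin problems reduces to the observation (already invoked for the elliptic Signorini case) that $\lambda$ is supported on $\widetilde\Gamma$ and therefore proportional to $\delta_{\widetilde\Gamma}$ with density in $H^{\frac12-s}(\widetilde\Gamma)$ a.e.\ in $t$. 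With these points addressed, the four cases follow by the same case-by-case computation as in Theorem~\ref{t:ell_equivalence}.
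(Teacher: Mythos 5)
The paper states this theorem without proof --- only the elliptic analogue, Theorem~\ref{t:ell_equivalence}, is proved, and only for cases $(i)$ and $(iii)$ --- so your proposal fills a gap the authors leave open, and it does so by exactly the route they intend: time-integrate the elliptic argument, set $\lambda = f-(-\Delta)^s u-\partial_t u$, and localize in $t$ (via tensor-product test functions $\phi(t)\,w(x)$) to check membership in $\widetilde\Lambda$. The genuinely new ingredients are the ones you flag: the identity $\int_0^T\langle\partial_t u,u\rangle\,\mathrm{d}t=\tfrac12\|u(T)\|_{L^2}^2-\tfrac12\|u_0\|_{L^2}^2$ coming from $\mathbb{W}(0,T)\hookrightarrow C^0(0,T;L^2(\Omega))$, and the support/trace argument placing $\lambda(t)$ in $H^{\frac12-s}(\widetilde\Gamma)$ for the thin problems. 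One correction for the obstacle cases $(i)$--$(ii)$: the test functions $v=0$ and $v=2u$ are in general not admissible in $\mathbb{W}_K(0,T)$, since neither need satisfy the constraint $v\geq\chi$; the admissible pair is $v=\chi$ (resp.\ an extension of $g$), extended constantly in time, together with $v=2u-\chi$, and this pair moreover yields directly the complementarity $\int_0^T b(\lambda,u)\,\mathrm{d}t=\int_0^T\langle\lambda,w\rangle\,\mathrm{d}t$ required for part b) of Problem~\ref{p:para_mixed}, whereas $v=0,\,2u$ would only give $\int_0^T b(\lambda,u)\,\mathrm{d}t=0$. (The paper's elliptic proof has the same wrinkle.) With that substitution your outline is sound.
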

In our case, discretization in time is done by discontinuous Galerkin of order $q=0$ as in the case for variational inequalities. However, the analysis holds for an arbitrary $q$ with minor adjustments. For extensions of $q$ to higher degree, see \cite{Saito}.\\
Find $(u_{\tau},\lambda_{\tau}) \in  \mathbb{W}_{\tau}(0,T)\times \widetilde{\Lambda}_{o\tau}$ such that:
\begin{align}
\sum_{k=1}^M \int_{I_k} \langle \partial_t u_{\tau}, v_{\tau}  \rangle + a(u_{\tau},v_{\tau}) + b(\lambda_{\tau}, v_{\tau}) \mathrm{d}t +\sum_{k=1}^M \langle \left[u_{\tau}\right]^{k-1} , v_{\tau}^{{k-1},+} \rangle &= \sum_{k=1}^M \int_{I_k} \langle f, v_{\tau} \rangle \mathrm{d}t, \label{e:dgSemi}\\
\sum_{k=1}^M\int_{I_k} b(\lambda_{\tau}-\mu_{\tau}, u_{\tau}) \mathrm{d}t &\leq \sum_{k=1}^M\int_{I_k} \langle\lambda_{\tau}-\mu_{\tau}, u_{\tau}\rangle \mathrm{d}t,
\end{align}
for all $u_{\tau} \in \mathbb{W}_{\tau}(0,T)$ and $\mu_{\tau} \in \widetilde{\Lambda}_{o\tau}$, where $\widetilde{\Lambda}_{o\tau}$ is the time discrete counterpart of piecewise constant approximations in time of $\widetilde{\Lambda}_o$.\\
Eventhough, one can consider the continuous parabolic problem pointwise, discretization in time by discontinuous elements introduces additional jump terms which imply that the semidiscrete formulation cannot be treated pointwise. However, we can focus on one time step only:
\begin{equation*}
\int_{I_k} \langle \partial_t u_{\tau}, v_{\tau}  \rangle + a(u_{\tau},v_{\tau}) + b(\lambda_{\tau}, v_{\tau}) \mathrm{d}t + \langle \left[u\right]^{k-1} , v_{\tau}^{{k-1},+} \rangle = \int_{I_k} \langle f, v_{\tau} \rangle \mathrm{d}t. 
\end{equation*}
Note that as before, we can use the definition of $B_{DG}(\cdot,\cdot)$ to and write the semi-discrete and discrete problem in the mixed formulation:
\begin{problem}\label{p:para_mixed_H_disc}
Find $(u_{h},\lambda_{H}) \in \mathbb{W}_{h}(0,T) \times \widetilde{\Lambda}_{H}$ such that,
\begin{align}
B_{DG}(u_{h},v_{h}) + \int_{0}^T b(\lambda_{H}, v_{h}) \dt &= \int_0^T \langle f , v_{h} \rangle \dt,\\
\int_{0}^T b(\lambda_{H}-\mu_{H}, u_{h}) \mathrm{d}t &\leq \int_0^T \langle \lambda_{H}-\mu_{H}, w_{h} \rangle \mathrm{dt},
\end{align}
for the fully discrete problem for all $v_{h} \in \mathbb{W}_{h}(0,T)$ and $\mu_{H} \in \widetilde{\Lambda}_{H}$.
\end{problem}
\begin{problem}[Discrete mixed formulation]\label{p:para_mixed_disc}
Find $(u_{h\tau},\lambda_{H\tau}) \in \mathbb{W}_{h\tau}(0,T) \times \widetilde{\Lambda}_{H\tau}$ such that,
\begin{align}
B_{DG}(u_{h\tau},v_{h\tau}) +\sum_{k=1}^M \int_{I_k} b(\lambda_{H\tau}, v_{h\tau}) \dt +\sum_{k=1}^M \langle \left[u_{h\tau}\right]^{k-1} , v_{h\tau}^{{k-1},+} \rangle &= \sum_{k=1}^M \int_{I_k}\langle f , v_{h\tau} \rangle \dt,\\
\sum_{k=1}^M\int_{I_k} b(\lambda_{H\tau}-\mu_{H\tau}, u_{h\tau}) \mathrm{d}t &\leq \sum_{k=1}^M\int_{I_k} \langle \lambda_{H\tau}-\mu_{H\tau}, w_{h\tau} \rangle \mathrm{dt},
\end{align}
for the fully discrete problem for all $v_{h\tau} \in \mathbb{W}_{h\tau}(0,T)$ and $\mu_{H\tau} \in \widetilde{\Lambda}_{H\tau}$.
\end{problem}
\begin{rem}\label{t:para_equivalence_disc}
For problems \ref{prob:obst}--\ref{prob:friction} the corresponding $\widetilde{\Lambda}_{H\tau}$ and $w$ in Problem~\ref{p:para_mixed_disc} are given by:
\begin{align*}
(i)\; &\widetilde{\Lambda}_{oH\tau} = \lbrace \mu_{H\tau} \in \widetilde{\mathbb{M}}_{H\tau}: \mu_{H\tau} \leq 0\;\mathrm{a.e.}\; t \in (0,T)\rbrace,\; w = \chi,\\
(ii)\; &\widetilde{\Lambda}_{sH\tau} = \lbrace \mu_{H\tau} \in \widetilde{\mathbb{M}}_{H\tau}^{\widetilde{\Gamma}}: \mu_{H\tau} \leq 0\;\mathrm{a.e.}\; t \in (0,T)\rbrace,\; w = g,\\
(iii)\; &\widetilde{\Lambda}_{IH\tau} = \lbrace \mu_{H\tau} \in \widetilde{\mathbb{M}}_{H\tau}: |\mu_{H\tau}| \leq \mathcal{F}\;\mathrm{a.e.}\; t \in (0,T)\rbrace,\; w = 0,\\
(iv)\; &\widetilde{\Lambda}_{\widetilde{\Gamma} H\tau} = \lbrace \mu_{H\tau} \in \widetilde{\mathbb{M}}_{H\tau}^{\widetilde{\Gamma}}: |\mu_{H\tau}| \leq \mathcal{F}\;\mathrm{a.e.}\; t \in (0,T)\rbrace, \; w = 0.
\end{align*}
\end{rem}

\subsection{A priori estimates for mixed formulations}
\noindent We turn our attention to the parabolic mixed problem. In order to avoid unnecessary repetition, we denote $\widetilde{\Lambda}_o,\;\widetilde{\Lambda}_s,\;\widetilde{\Lambda}_I,\;\widetilde{\Lambda}_{\widetilde{\Gamma}}$ by $\widetilde{\Lambda}$ as well as their respective semi-discrete and discrete counterparts by $\widetilde{\Lambda}_H$, $\widetilde{\Lambda}_{\tau}$, and $\widetilde{\Lambda}_{H\tau}$. The results are presented for problems with the constraint imposed in the domain only. The arguments for thin problems follow directly.\\
Note that standard DG theory applies and we can introduce the following result. See for example \cite{Thomee}.
\begin{lem}
Let $(u_h,\lambda_H),\;(u_{h\tau},\lambda_{H\tau})$ be solutions to Problems~\ref{p:para_mixed_H_disc} and \ref{p:para_mixed_disc}, respecitvely. Then
\begin{equation*}
\|u_h(T)-u^M_{h\tau}\|_{L^2(\Omega)}^2+ \int_0^T \|u_h-u_{h\tau}\|_{H}^2 \mathrm{d}t \lesssim \sum_{k=1}^M \tau_k^{2q} \int_{I_k} \|\partial_t^q u_h\|^2_{H} \mathrm{d}t + \sum_{k=1}^M \int_{I_k} \|\lambda_H-\lambda_{H\tau} \|^2_{H^*} \mathrm{d}t.
\end{equation*}
\end{lem}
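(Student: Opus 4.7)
The plan is to decompose the error via a time projection $\pi_\tau: \mathbb{W}_h(0,T) \to \mathbb{W}_{h\tau}(0,T)$ and control the two pieces separately. I would take $\pi_\tau$ to be the standard DG-in-time projection, characterized on each subinterval $I_k$ by pointwise matching $(\pi_\tau u_h)(t_k^-) = u_h(t_k^-)$ together with $L^2$-orthogonality to polynomials of degree $q-1$ on $I_k$. Writing $u_h - u_{h\tau} = \eta + \xi$ with $\eta = u_h - \pi_\tau u_h$ and $\xi = \pi_\tau u_h - u_{h\tau}$, the interpolation part satisfies the classical bound $\int_{I_k} \|\eta\|_H^2 \,\mathrm{d}t \lesssim \tau_k^{2q} \int_{I_k} \|\partial_t^q u_h\|_H^2 \,\mathrm{d}t$ and $\eta(t_k^-)=0$ for every $k$.

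For the discrete part $\xi \in \mathbb{W}_{h\tau}(0,T)$, I would subtract Problem~\ref{p:para_mixed_H_disc} from Problem~\ref{p:para_mixed_disc} tested against $v_{h\tau} = \xi$. Since $u_h$ is continuous in time, all jumps $[u_h]^{k-1}$ vanish and can be freely inserted to yield the error equation
\begin{equation*}
B_{DG}(\eta+\xi,\xi) + \sum_{k=1}^{M} \langle [\eta+\xi]^{k-1}, \xi^{k-1}_+ \rangle + \int_0^T b(\lambda_H - \lambda_{H\tau}, \xi)\,\mathrm{d}t = 0.
\end{equation*}
Lemma~\ref{l:coercive} applied to the $\xi$-terms on the left yields the lower bound $\alpha \|\xi\|_{L^2(0,T;H)}^2 + \frac{1}{2}\|\xi^M_-\|_{L^2(\Omega)}^2$, together with nonnegative squares of the intermediate jumps that I simply discard. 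The crucial simplification of the $\eta$-contributions is that, on each $I_k$, integration by parts in $t$ together with $\eta(t_k^-)=0$ and the $L^2$-orthogonality of $\eta$ to $\partial_t v_{h\tau}$ cancels the combination $\int_{I_k}\langle \partial_t \eta, \xi\rangle\,\mathrm{d}t + \langle [\eta]^{k-1}, \xi^{k-1}_+\rangle$; only $\int_0^T a(\eta,\xi)\,\mathrm{d}t$ survives, bounded by $C\|\eta\|_{L^2(0,T;H)}\|\xi\|_{L^2(0,T;H)}$ via continuity of $a$.

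The delicate bookkeeping step, which I view as the only non-routine point, is the initial subinterval $k=1$: the cancellation above requires $\eta(0^-) = 0$, which holds under the natural convention $u_{h\tau}^{0-} = u_h(0)$ for the discrete initial condition and which I will assume. What remains is
\begin{equation*}
\alpha \|\xi\|_{L^2(0,T;H)}^2 + \tfrac{1}{2}\|\xi^M_-\|_{L^2(\Omega)}^2 \leq C\|\eta\|_{L^2(0,T;H)}\|\xi\|_{L^2(0,T;H)} + C\|\lambda_H - \lambda_{H\tau}\|_{L^2(0,T;H^*)}\|\xi\|_{L^2(0,T;H)},
\end{equation*}
where the last term uses continuity of $b$ and the Cauchy--Schwarz inequality. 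Young's inequality absorbs the $\xi$-factors on the right into the left-hand side. Combining $\|u_h - u_{h\tau}\|^2_{L^2(0,T;H)} \leq 2\|\eta\|^2_{L^2(0,T;H)} + 2\|\xi\|^2_{L^2(0,T;H)}$ with the interpolation estimate for $\eta$ yields the asserted $L^2(0,T;H)$-bound, and $\|u_h(T)-u_{h\tau}^M\|^2_{L^2(\Omega)} \leq 2\|\eta(T^-)\|^2_{L^2(\Omega)} + 2\|\xi^M_-\|^2_{L^2(\Omega)} = 2\|\xi^M_-\|^2_{L^2(\Omega)}$ by the projection property at $t_M^-$, completing the estimate.
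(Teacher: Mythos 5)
Your proof is correct and follows essentially the same route as the paper's: both split $u_h - u_{h\tau}$ via a degree-$q$ in-time interpolant whose defining orthogonality annihilates the combined $\partial_t$-plus-jump terms against discrete test functions, then run the DG coercivity/energy argument (Lemma~\ref{l:coercive}, or the explicit telescoping) on the remaining fully discrete part, bounding the $a(\eta,\xi)$ and $b(\lambda_H-\lambda_{H\tau},\xi)$ couplings by continuity and Young's inequality before concluding with the triangle inequality and the standard interpolation estimate. The only cosmetic difference is your choice of the right-endpoint (Radau-type) projection versus the paper's nodal interpolant, and both treatments handle the $t=0$ bookkeeping in the same implicit way.
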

\begin{proof}
Let $\tilde{u}(t,\cdot)$ be an interpolant in time of degree $q$ of $u_h(t,\cdot)$ such that
\begin{align*}
\tilde{u}(t_k,\cdot) &= u_h(t_k,\cdot),\;\forall\;k,\\
\int_{I_k} (\tilde{u}(t,\cdot)-u_h(t,\cdot))t^{\ell} \dt &=0, \; \mathrm{for}\; \ell \leq q-2.
\end{align*}
By standard arguments for all $t \in I_k$,
\begin{equation}
\|\tilde{u}(t,\cdot) - u_h(t,\cdot)\|_{L^2(\Omega)}^2 \leq C \tau_k^{2q-1} \int_{I_k} \|\partial_t^{q} u_h\|_{H}^2 \dt.
\end{equation}
Writing $u_h - u_{h\tau} = (u_h - \tilde{u})+(\tilde{u}-u_{h\tau}) = e_1+e_2$, we note that
\begin{equation}\label{e:IntP1}
\|e_1\|_{L^2(\Omega)}^2 \leq C \sum_{k=1}^M \tau_k^{2q-1} \int_{I_k} \|\partial_t^q u_h\|_{H}^2 \dt. 
\end{equation}
Therefore, we only need to establish a bound on $e_2$. By modified Galerkin orthogonality for suitable $v$,
\begin{equation}\label{e:GO}
\int_{I_k} (\partial_t e_2, v) + a(e_2,v)+b(\lambda-\lambda_{\tau},v) \dt + (\left[e_2\right]^{k-1}, v^{k-1,+}) = -\int_{I_k} (\partial_t e_1,v)+ a(e_1,v) \dt - ([e_1]^{k-1},v^{k-1,+}).
\end{equation}
Since for all $v \in \mathbb{W}_{h\tau}(0,T)$,
\begin{equation*}
\int_{I_k} (\pt e_1,v) \dt +([e_1]^{k-1}, v^{k-1,+}) = 0,
\end{equation*}
the equation \eqref{e:GO} becomes
\begin{equation*}
\int_{I_k} (\partial_t e_2, v) + a(e_2,v)+b(\lambda-\lambda_{\tau},v) \dt + (\left[e_2\right]^{k-1}, v^{k-1,+}) = -\int_{I_k} a(e_1,v) \dt.
\end{equation*}
Furthermore, note that
\begin{equation*}
\begin{split}
2\int_{I_k} (\pt e_2,e_2) \dt +2 ([e_2]^{k-1},e_2^{k-1,+}) &= \|e_2^k\|_{L^2(\Omega)}^2-\|e_2^{k-1,+}\|_{L^2(\Omega)}^2 + 2\|e_2^{k-1,+}\|_{L^2(\Omega)}^2 -2(e_2^{k-1},e_2^{k-1,+})\\ &\geq \|e_2^k\|_{L^2(\Omega)}^2-\|e_2^{k-1}\|_{L^2(\Omega)}^2.
\end{split}
\end{equation*}
Thus,
\begin{equation*}
\|e_2^k\|_{L^2(\Omega)}^2-\|e_2^{k-1}\|_{L^2(\Omega)}^2 +2 \int_{I_k} a(e_2,e_2) + b(\lambda_H - \lambda_{H\tau},e_2) \dt \leq 2 \int_{I_k} \vert a(e_1,e_2) \vert \dt,
\end{equation*}
and by iterating through the time intervals,
\begin{equation*}
\|e_2^M\|_{L^2(\Omega)}^2 + 2 \sum_{k=0}^M \int_{I_k} a(e_2,e_2) \dt \leq \|e_2^0\|_{L^2(\Omega)}^2 + 2 \sum_{k=0}^M \int_{I_k} \vert a(e_1,e_2)\vert + b(\lambda_{H\tau} - \lambda_H,e_2) \dt.
\end{equation*}
Using coercivity and continuity of bilinear forms
\begin{equation*}
\begin{split}
\|e_2^M\|_{L^2(\Omega)}^2 &+  \sum_{k=0}^M \int_{I_k} \|e_2\|_{H}^2 \dt\\ &\lesssim \|e_2^0\|_{L^2(\Omega)}^2 +  \sum_{k=0}^M \int_{I_k} \|e_1\|_{H}^2 +\|e_2\|_{H}^2 + \|\lambda_{H\tau} - \lambda_H\|_{H^*}^2 + \|e_2\|_{H}^2 \dt.
\end{split}
\end{equation*}
Thus,
\begin{equation*}
\|e_2^M\|_{L^2(\Omega)}^2 +   \int_{0}^T \|e_2\|_{L^2(\Omega)}^2 \dt \lesssim \int_0^T \|e_1\|_{H}^2 \dt + \int_0^T \|\lambda_H-\lambda_{H\tau}\|_{H^*}^2 \dt.
\end{equation*}
The conclusion follows from the triangle inequality and estimate \eqref{e:IntP1}.
\end{proof}
\begin{lem}\label{l:para_apriori}
Let $(u,\lambda)$ and $(u_{h},\lambda_H)$ be solutions of Problems~\ref{p:para_mixed} and \ref{p:para_mixed_H_disc}, respectively. Then,
\begin{equation*}
\begin{split}
\|(u-u_h)(T)\|^2_{L^2(\Omega)}&+ \|u - u_{h}\|^2_{L^2(0,T;H)} \\&\lesssim \|\partial_t (u-v_h)\|^2_{L^2(0,T;H^*)} + \|u-v_h\|^2_{L^2(0,T;H)} \\ & \;+ \|\lambda-\lambda_H\|^2_{L^2(0,T;H^*)} + \|\lambda-\mu_H\|^2_{L^2(0,T;H^*)} \\ & \;- \int_0^T b(\mu_H-\lambda,u) - \langle \mu_H - \lambda, w\rangle\dt - \int_0^T b(\mu-\lambda_H,u) - \langle \mu - \lambda_H, w\rangle \dt,
\end{split}
\end{equation*}
for all $(v_h,\mu_H) \in \mathbb{W}_h(0,T) \times \widetilde{\Lambda}_H$, where $\widetilde{\Lambda}_H$ is the corresponding semidiscrete space related to Problems~\ref{prob:obst}--\ref{prob:friction}.
\end{lem}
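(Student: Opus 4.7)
The plan is to reproduce the algebraic manipulation of the elliptic Lemma~\ref{l:prelim_apriori_elliptic} in the parabolic DG framework, with the coercivity of $a(\cdot,\cdot)$ replaced by the DG coercivity provided by Lemma~\ref{l:coercive}. Since both $u\in\mathbb{W}(0,T)$ and $u_h\in\mathbb{W}_h(0,T)$ are continuous in time, every interior jump $[u-u_h]^k$ vanishes, and Lemma~\ref{l:coercive} applied to $v=u-u_h$ furnishes the lower bound
\begin{equation*}
B_{DG}(u-u_h,u-u_h) \gtrsim \|u-u_h\|^2_{L^2(0,T;H)} + \tfrac{1}{2}\|(u-u_h)(T)\|^2_{L^2(\Omega)} - \tfrac{1}{2}\|(u-u_h)(0)\|^2_{L^2(\Omega)}.
\end{equation*}
The initial-datum contribution vanishes because $u(0) = u_h(0) = u_0$, so both quantities on the left-hand side of the claim are controlled by $B_{DG}(u-u_h, u-u_h)$.

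For the upper bound I would follow the combinatorial bookkeeping of the elliptic argument. Writing
\begin{equation*}
B_{DG}(u-u_h,u-u_h) = B_{DG}(u,u) + B_{DG}(u_h,u_h) - B_{DG}(u,u_h) - B_{DG}(u_h,u),
\end{equation*}
I would invoke equation~(a) of Problem~\ref{p:para_mixed} with test functions $u$, $u_h$, and $u_h-v_h$, and equation~(a) of Problem~\ref{p:para_mixed_H_disc} with test functions $u_h$ and $v_h$ (all admissible since $v_h,\,u_h-v_h\in\mathbb{W}_h(0,T)\subset\mathbb{W}(0,T)$), and use the constraints~(b) of both problems tested with $\mu_H\in\widetilde{\Lambda}_H\subset\widetilde{\Lambda}$ and with $\mu\in\widetilde{\Lambda}$ respectively, in order to eliminate the $\langle f,\cdot\rangle$ contributions and to express $-\int_0^T b(\lambda,u)\,\dt$ and $-\int_0^T b(\lambda_H,u_h)\,\dt$ through their computable analogues. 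Collecting terms exactly as in the elliptic derivation produces an identity of the form
\begin{equation*}
\begin{split}
B_{DG}(u-u_h,u-u_h) \;\leq\;& B_{DG}(u_h-u,\,v_h-u) + \int_0^T b(\lambda_H-\lambda,\,v_h-u)\,\dt - \int_0^T b(\mu_H-\lambda,\,u_h-u)\,\dt \\ &- \int_0^T \bigl[b(\mu_H-\lambda,u) - \langle \mu_H-\lambda, w\rangle\bigr]\,\dt \\ &- \int_0^T \bigl[b(\mu-\lambda_H,u) - \langle \mu-\lambda_H, w\rangle\bigr]\,\dt,
\end{split}
\end{equation*}
whose last two lines are precisely the contact residuals appearing in the statement.

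The three bilinear contributions in the first line are then bounded by continuity. Continuity of $\partial_t:\mathbb{W}(0,T)\to L^2(0,T;H^*)$ and of $a(\cdot,\cdot)$ yield
\begin{equation*}
|B_{DG}(u_h-u,v_h-u)| \lesssim \bigl(\|\partial_t(u-v_h)\|_{L^2(0,T;H^*)} + \|u-v_h\|_{L^2(0,T;H)}\bigr)\|u-u_h\|_{L^2(0,T;H)},
\end{equation*}
while continuity of $b(\cdot,\cdot)$ produces $\|\lambda-\lambda_H\|_{L^2(0,T;H^*)}\|u-v_h\|_{L^2(0,T;H)}$ and $\|\lambda-\mu_H\|_{L^2(0,T;H^*)}\|u-u_h\|_{L^2(0,T;H)}$. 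A Young inequality with a sufficiently small $\varepsilon>0$ absorbs every $\varepsilon\|u-u_h\|^2_{L^2(0,T;H)}$ contribution into the coercivity lower bound on the left, and the asserted estimate follows.

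The principal obstacle is the combinatorial bookkeeping of the Lagrange-multiplier terms needed to reproduce exactly the two contact residuals of the statement; this is the parabolic analogue of the elliptic algebra and requires care because of the sign convention used in inequality~(b) of Problem~\ref{p:para_mixed_H_disc}. The distinctively parabolic feature absent from the elliptic proof is the time-derivative contribution $\|\partial_t(u-v_h)\|_{L^2(0,T;H^*)}$, which is an unavoidable consequence of estimating $B_{DG}(u-u_h,u-v_h)$ and which justifies the appearance of the full $\mathbb{W}(0,T)$-norm of the interpolation error on the right-hand side.
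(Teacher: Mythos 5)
Your proposal is correct and follows essentially the same route as the paper: coercivity of $B_{DG}$ (Lemma~\ref{l:coercive}) for the lower bound, the two equations~(a) and the two constraints~(b) to produce the Galerkin-orthogonality identity with the contact residuals, then continuity and Young's inequality; your four-term expansion of $B_{DG}(u-u_h,u-u_h)$ is just a cosmetic variant of the paper's add-and-subtract of $\int_0^T b(\lambda-\lambda_H,u-u_h)\,\dt$. The only loose point is attributing the bound on $B_{DG}(u_h-u,v_h-u)$ to ``continuity of $\partial_t$'': as written that term contains $\langle\partial_t(u_h-u),v_h-u\rangle$ and yields $\|\partial_t(u-u_h)\|_{L^2(0,T;H^*)}$, so to obtain $\|\partial_t(u-v_h)\|_{L^2(0,T;H^*)}$ one must integrate by parts in time (using $u(0)=u_h(0)$ and absorbing the terminal term into $\|(u-u_h)(T)\|_{L^2(\Omega)}^2$), which is exactly the step the paper invokes at the end of its proof.
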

\begin{proof}
By coercivity of the bilinear form $B_{DG}(\cdot,\cdot)$,
\begin{equation*}
\begin{split}
\|u-u_h\|^2_{L^2(0,T;H)} &\lesssim B_{DG}(u-u_h,u-u_h) + \int_0^T b(\lambda-\lambda_H,u-u_h) \dt \\ & \quad- \int_0^T b(\lambda-\lambda_H,u-u_h) \dt \\
& = B_{DG}(u-u_h,u-v_h) +\int_0^T b(\lambda-\lambda_H,u-v_h) \dt\\ & \quad - \int_0^T b(\lambda-\lambda_H,u-u_h) \dt \\
& \leq B_{DG}(u-u_h,u-v_h) +\int_0^T b(\lambda-\lambda_H,u-v_h) \dt\\ & \quad -\int_0^T b(\mu-\lambda_H,u) \dt - \int_0^T b(\mu_H-\lambda,u_h) \dt\\ &\quad +\int_0^T \langle \mu - \lambda_H,w \rangle \;\mathrm{dt} +\int_0^T \langle \mu_H - \lambda,w \rangle\; \mathrm{dt} \\
& =  B_{DG}(u-u_h,u-v_h) +\int_0^T b(\lambda-\lambda_H,u-v_h) \dt \\ & \quad+\int_0^T b(\mu_H-\lambda,u-u_h) \dt -\int_0^T b(\mu-\lambda_H,u) \dt \\ & \quad- \int_0^T b(\mu_H-\lambda,u) \dt +\int_0^T \langle \mu - \lambda_H,w \rangle\; \mathrm{dt} +\int_0^T \langle \mu_H - \lambda,w \rangle\; \mathrm{dt},
\end{split}
\end{equation*}
where we have used the constraint on $b(\cdot,\cdot)$. Then integration by parts in time for the bilinear form $B_{DG}(\cdot,\cdot)$ yields the result.
\end{proof}
\begin{rem}\label{rem:1}
Let $U$ be a fully discrete solution of degree $q$ in time. Then there exists a piecewise polynomial function $\mathcal{U} \in \mathbb{W}(0,T)$ of degree $q+1$ such that it interpolates $U$ at the local points,
\begin{equation}
\mathcal{U}(\tau^j_k) = U(t^j_k),\; \mathrm{for\;all}\; j = 1,\dots ,q+1.
\end{equation}
Furthermore, imposing continuity gives
\begin{equation}
\mathcal{U}(t_{k-1}) = U_{k-1}^-.
\end{equation}
Thus $\mathcal{U}$ is uniquely defined as 
\begin{equation}
\mathcal{U}|_{I_k} := \sum_{j=0}^{q+1} \mathcal{L}_j\left(\frac{t-t_{k-1}}{\tau_{k}}\right) U(t_{k}^j),
\end{equation}
where $\mathcal{L}_j$ are Lagrange polynomials. Then from integration by parts in time, observe that \eqref{e:dgSemi} is equivalent to
\begin{equation}
\sum_{k}\int_{I_k} \langle \partial_t \mathcal{U}, V \rangle + a(U,V) +b(\Lambda,V) \dt = \sum_{k} \int_{I_k} \langle f,V\rangle \dt,
\end{equation}
where $V$ is a piecewise polynomial function of degree $q$ in time.
\end{rem}

\begin{lem}
Let $(u,\lambda),\;(u_{h\tau},\lambda_{H\tau})$ be solutions of Problems~\ref{p:para_mixed} and \ref{p:para_mixed_disc}, respectively. Under the assumption that the discrete inf-sup condition holds,
\begin{equation*}
\|\lambda-\lambda_{H\tau}\|_{L^2(0,T;H^{*})}^2 \lesssim \|\lambda - \mu_{H\tau}\|^2_{L^2(0,T;H^{*})} + \|u-u_{h\tau}\|^2_{L^2(0,T;H)} + \|\partial_t( u - u_{h\tau})\|^2_{L^2(0,T;H^*)},
\end{equation*}
for all $\mu_{H\tau} \in \widetilde{\Lambda}_{H\tau}$.
\end{lem}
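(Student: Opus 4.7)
The plan is to lift the inf-sup argument from the elliptic Lagrange-multiplier estimate to the parabolic setting, slab by slab in time. First I would note that on every time interval $I_k$ the error $\lambda_{H\tau}|_{I_k} - \mu_{H\tau}|_{I_k}$ is a time-constant element of $\mathbb{M}_H$, so the discrete inf-sup condition~\eqref{e:InfSupD} yields a $v_h^k \in \mathbb{V}_h$ normalised by $\|v_h^k\|_H = \|\lambda_{H\tau}|_{I_k}-\mu_{H\tau}|_{I_k}\|_{H^*}$ and satisfying
\begin{equation*}
\hat{\beta}\,\|\lambda_{H\tau}|_{I_k}-\mu_{H\tau}|_{I_k}\|_{H^*}^2 \le b\bigl(\lambda_{H\tau}|_{I_k}-\mu_{H\tau}|_{I_k},\,v_h^k\bigr).
\end{equation*}
Glueing the $v_h^k$ produces $v_h^\star \in \mathbb{W}_{h\tau}(0,T)$, piecewise constant in $t$, with $\|v_h^\star\|_{L^2(0,T;H)} = \|\lambda_{H\tau}-\mu_{H\tau}\|_{L^2(0,T;H^*)}$ and, after summation over $k$, the integrated inf-sup bound
\begin{equation*}
\hat{\beta}\,\|\lambda_{H\tau}-\mu_{H\tau}\|_{L^2(0,T;H^*)}^2 \le \int_0^T b\bigl(\lambda_{H\tau}-\mu_{H\tau},\,v_h^\star\bigr)\,\dt .
\end{equation*}

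Next I would split the integrand as $b(\lambda_{H\tau}-\lambda,v_h^\star) + b(\lambda-\mu_{H\tau},v_h^\star)$. The second piece is handled immediately by continuity of $b$ and Cauchy--Schwarz, giving a contribution $\lesssim \|\lambda-\mu_{H\tau}\|_{L^2(0,T;H^*)}\,\|v_h^\star\|_{L^2(0,T;H)}$. For the first piece the essential step is to produce a Galerkin-style error equation free of DG jump terms; here I would invoke Remark~\ref{rem:1} to rewrite the discrete mixed formulation (Problem~\ref{p:para_mixed_disc}) as
\begin{equation*}
\sum_k \int_{I_k}\!\langle \partial_t\mathcal{U}_{h\tau},v_{h\tau}\rangle + a(u_{h\tau},v_{h\tau}) + b(\lambda_{H\tau},v_{h\tau})\,\dt = \sum_k \int_{I_k}\!\langle f,v_{h\tau}\rangle\,\dt,
\end{equation*}
in terms of the continuous, piecewise polynomial time-lift $\mathcal{U}_{h\tau}$ of $u_{h\tau}$. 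Subtracting the continuous identity in Problem~\ref{p:para_mixed} tested against $v_h^\star$ gives
\begin{equation*}
\int_0^T b(\lambda_{H\tau}-\lambda,v_h^\star)\,\dt = \int_0^T \langle \partial_t(u-\mathcal{U}_{h\tau}),v_h^\star\rangle + a(u-u_{h\tau},v_h^\star)\,\dt,
\end{equation*}
which Cauchy--Schwarz and continuity of $a$ bound by $\bigl(\|\partial_t(u-\mathcal{U}_{h\tau})\|_{L^2(0,T;H^*)} + C\|u-u_{h\tau}\|_{L^2(0,T;H)}\bigr)\|v_h^\star\|_{L^2(0,T;H)}$.

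Combining the two contributions, absorbing the common factor $\|v_h^\star\|_{L^2(0,T;H)} = \|\lambda_{H\tau}-\mu_{H\tau}\|_{L^2(0,T;H^*)}$, and then applying the triangle inequality $\|\lambda-\lambda_{H\tau}\|_{L^2(0,T;H^*)} \le \|\lambda-\mu_{H\tau}\|_{L^2(0,T;H^*)} + \|\lambda_{H\tau}-\mu_{H\tau}\|_{L^2(0,T;H^*)}$ together with Young's inequality will deliver the asserted estimate. The main obstacle I anticipate is reconciling the time-derivative term produced by the clean error equation, namely $\partial_t(u-\mathcal{U}_{h\tau})$, with the $\partial_t(u-u_{h\tau})$ appearing in the statement: since $u_{h\tau}$ is only piecewise polynomial in time, its derivative must be interpreted either distributionally (including the jumps) or via the continuous lift $\mathcal{U}_{h\tau}$, and one needs a standard interpolation estimate of the form $\|\partial_t(\mathcal{U}_{h\tau}-u_{h\tau})\|_{L^2(0,T;H^*)} \lesssim \|\partial_t(u-u_{h\tau})\|_{L^2(0,T;H^*)} + \|u-u_{h\tau}\|_{L^2(0,T;H)}$ to pass from one to the other. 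A minor technical point is to verify that the slab-wise supremiser $v_h^\star$ is an admissible test function in $\mathbb{W}_{h\tau}(0,T)$, which is immediate from the finite dimensionality of $\mathbb{V}_h$.
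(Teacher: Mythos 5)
Your proposal is correct and follows essentially the same route as the paper: both use the continuous time-lift $\mathcal{U}$ from Remark~\ref{rem:1} to obtain a jump-free error equation, subtract the continuous mixed formulation, bound the resulting terms by continuity of $a$ and $b$, and close via the discrete inf-sup condition, integration in time, and the triangle inequality. The reconciliation of $\partial_t(u-\mathcal{U})$ with $\partial_t(u-u_{h\tau})$ that you flag as the main obstacle is exactly what the paper disposes of with the phrase ``standard approximation properties,'' so your treatment is, if anything, more explicit on that point.
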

\begin{proof}
By Remark~\ref{rem:1} we consider the bilinear form pointwise in time using interpolant $\mathcal{U}$. Then,
\begin{equation*}
\begin{split}
b(\mu_{H\tau}-\lambda_{H\tau},v_{h\tau}) &= b(\mu_{H\tau},v_{h\tau}) - b(\lambda_{H\tau},v_{h\tau})\\
& = b(\mu_{H\tau},v_{h\tau}) +\langle \partial_t \mathcal{U},v_{h\tau}\rangle +a(u_{h\tau}, v_{h\tau})- \langle f, v_{h\tau}\rangle \\
& = b(\mu_{H\tau} - \lambda, v_{h\tau} ) + \partial_t (\mathcal{U}-u),v_{h\tau}\rangle +a(u_{h\tau}-u, v_{h\tau})\\
&\lesssim \left( \|\mu_{H\tau} - \lambda\|_{H^*} + \|\partial_t (\mathcal{U}-u_{h\tau})\|_{H^*}\right. \\ & \left.\;\;+ \|\partial_t (u-u_{h\tau})\|_{H^*} + \|u-u_{h\tau}\|_{H} \right) \|v_{h\tau}\|_{H}.
\end{split}
\end{equation*}
Using standard approximation properties, the discrete inf-sup condition and integrating in time yields the desired result.
\end{proof}

\subsection{A posteriori analysis of mixed formulation}

Similarly as in the case of the elliptic mixed problem, we begin by estimating the error of the approximate and exact solution in the energy norm. We begin by pointing out an estimate for the bilinear form from \cite{Nochetto0}.
\begin{lem}\label{l:a_est}
Let $a(\cdot,\cdot)$ be a continuous and coercive bilinear form. Then,
\begin{equation*}
a(u-v,w-v) \geq \frac{\alpha}{4} \left( \|u-v\|^2 + \|w-v\|^2\right) - \frac{C}{2}\|u-w\|^2.
\end{equation*}
\end{lem}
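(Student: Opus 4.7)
The bound is really just bilinearity combined with Young's inequality, but the key is to introduce $u-w$ in a symmetric way. My plan is to use the two algebraic decompositions
\begin{equation*}
w-v = (w-u) + (u-v), \qquad u-v = (u-w) + (w-v),
\end{equation*}
substituting each into one argument of $a(u-v,w-v)$ and expanding via bilinearity. Coercivity then contributes a genuine $\alpha\|u-v\|^2$, respectively $\alpha\|w-v\|^2$, while continuity bounds the remaining cross term below by $-C\|u-v\|\|u-w\|$, respectively $-C\|u-w\|\|w-v\|$.

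Next I would add these two lower bounds for $a(u-v,w-v)$ and divide by $2$, which produces a symmetric inequality featuring $\tfrac{\alpha}{2}(\|u-v\|^2+\|w-v\|^2)$ on the good side and two cross terms of the form $C\|u-w\|\cdot(\cdot)$ on the bad side. Then I would apply Young's inequality with parameter $\alpha/2$, namely
\begin{equation*}
C\|u-v\|\|u-w\| \leq \tfrac{\alpha}{2}\|u-v\|^2 + \tfrac{C^2}{2\alpha}\|u-w\|^2,
\end{equation*}
and analogously for the second cross term. Half of the coercive contribution is absorbed into the Young terms, leaving precisely $\frac{\alpha}{4}\bigl(\|u-v\|^2 + \|w-v\|^2\bigr)$ on one side and $-\frac{C^2}{2\alpha}\|u-w\|^2$ on the other, which is the claimed inequality (the constant $C$ appearing in the statement being $C^2/\alpha$).

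There is no real obstacle; the only point requiring attention is that the symmetrisation between the two decompositions must be carried out \emph{before} applying Young, otherwise one only obtains a one-sided bound involving $\|u-v\|^2$ or $\|w-v\|^2$ alone, not both. The Young parameter must also be chosen so that after absorbing the cross terms a positive fraction of the coercivity constant survives in front of each quadratic term; splitting $\alpha$ evenly is the natural choice and produces the stated factor $\alpha/4$.
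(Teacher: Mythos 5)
Your argument is correct, but it proceeds differently from the paper. The paper starts from the trivially nonnegative quantity $a(u-v,u-v)+a(w-v,w-v)$ and rewrites it, via the algebraic identity
\begin{equation*}
a(u-v,u-v)+a(w-v,w-v)=a(u-v,2w-u-v)+a(w-v,2u-v-w)+2a(u-w,u-w),
\end{equation*}
as $4a(u-v,w-v)-a(u-v,u-v)-a(w-v,w-v)+2a(u-w,u-w)$; coercivity and continuity applied only to the \emph{diagonal} terms then give the bound directly, with the continuity constant $C$ itself appearing as $\tfrac{C}{2}\|u-w\|^2$ and with no Young step. Your route — decomposing one argument at a time, symmetrising, and then absorbing the cross terms $C\|u-w\|\,\|u-v\|$ and $C\|u-w\|\,\|w-v\|$ via Young with parameter $\alpha/2$ — reaches the same inequality but with the larger constant $\tfrac{C^2}{2\alpha}$ in front of $\|u-w\|^2$, as you correctly note; since $\alpha\le C$ this is genuinely worse, though harmless here because the lemma is only invoked inside estimates with generic constants ($\lesssim$) in Theorem~\ref{l:para_a}. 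Two small trade-offs are worth recording: your proof applies continuity only to off-diagonal terms and never uses symmetry of $a(\cdot,\cdot)$, whereas the paper's step $a(u-v,2w-u-v)+a(w-v,2u-v-w)=4a(u-v,w-v)-\dots$ implicitly assumes $a(p,q)=a(q,p)$ (true for the fractional Laplacian form, but not for a general coercive bilinear form as the lemma is stated); on the other hand, the paper's identity-based proof is sharper in the constant and avoids the need to tune the Young parameter. Your observation that the symmetrisation must precede Young's inequality is exactly the right point of care.
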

\begin{proof}
Since $a(\cdot,\cdot)$ is coercive
\begin{equation*}
\begin{split}
0 &\leq a(u-v,u-v) + a(w-v,w-v) \\
&= a(u-v,2w-u-v) + a(w-v, 2u - v-w) + 2 a(u-w,u-w) \\
&= 4a(u-v,w-v) - a(u-v,u-v) - a(w-v,w-v) +2 a(u-w,u-w)\\
&\leq 4a(u-v,w-v) - \alpha \|u-v\|^2 - \alpha \|v-w\|^2 + 2C \|u-w\|^2
\end{split}
\end{equation*}
where we used the coercivity and continuity of the bilinear form.
\end{proof}

\begin{thm}\label{l:para_a}
Let $(u,\lambda),\;(u_{h\tau},\lambda_{H\tau})$ be solutions of Problems~\ref{p:para_mixed} and \ref{p:para_mixed_disc}, respectively. Furthermore, let $\mathcal{U}$ be interpolant of $u_{h\tau}$ defined in Remark~\ref{rem:1}. Let $r_{h\tau} = f - \partial_t \mathcal{U}- (-\Delta)^s u_{h\tau}$. Then,

\begin{equation*} 
\begin{split}
\|(u-\mathcal{U})(T)\|_{L^2(\Omega)}^2 &+ \sum_{k=1}^M \int_{I_k} \|\partial_t(u-\mathcal{U}) + (\lambda - \lambda_{H\tau})\|^2_{H^{*}} + \|u-\mathcal{U}\|^2_{H} + \|u-u_{h\tau}\|^2_{H} \dt \\ &\lesssim \|(u-\mathcal{U})(0)\|_{L^2(\Omega)}^2 + \sum_{k=1}^M \int_{I_k} b(\lambda_{H\tau}-\lambda,u-\mathcal{U}) \dt \\ &+ \sum_{k=1}^M \int_{I_k} \|r_{h\tau}-\lambda_{H\tau}\|_{H^{*}}^2 + \|u_{h\tau} - \mathcal{U}\|_{H}^2 \dt.
\end{split}
\end{equation*}

\end{thm}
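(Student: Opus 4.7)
The result is the parabolic counterpart of Theorem~\ref{th:APostEll}. The plan is to derive a space--time residual identity by combining the continuous mixed problem~\ref{p:para_mixed} with the discrete problem~\ref{p:para_mixed_disc} rewritten in its reconstructed form from Remark~\ref{rem:1}, test against $u-\mathcal{U}$, and then apply Lemma~\ref{l:a_est} to the elliptic part so as to produce both $\|u-u_{h\tau}\|_H^2$ and $\|u-\mathcal{U}\|_H^2$ from a single coercivity estimate. The time derivative term is handled by integration by parts, which produces the boundary contribution at $t=T$ through telescoping, relying on the continuity of $\mathcal{U}$ across time nodes.

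\textbf{Residual identity.} By Remark~\ref{rem:1}, the fully discrete scheme may be rewritten in a jump-free continuous-in-time form with $\partial_t \mathcal{U}$ replacing the combination of the piecewise time derivative and the jump terms. Subtracting this reformulation from the continuous equation and invoking the definition of $r_{h\tau}$ yields, for every $v \in L^2(0,T;H)$,
\[
\sum_{k=1}^M \int_{I_k} \langle \partial_t(u-\mathcal{U}),v\rangle + a(u-u_{h\tau},v) + b(\lambda-\lambda_{H\tau},v) \dt = \sum_{k=1}^M \int_{I_k} \langle r_{h\tau}-\lambda_{H\tau},v\rangle \dt.
\]

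\textbf{Testing, coercivity and absorption.} Choosing $v = u-\mathcal{U}$ and using $\int_0^T \langle \partial_t(u-\mathcal{U}),u-\mathcal{U}\rangle\dt = \tfrac12 \|(u-\mathcal{U})(T)\|_{L^2(\Omega)}^2 - \tfrac12 \|(u-\mathcal{U})(0)\|_{L^2(\Omega)}^2$ (valid since $\mathcal{U}$ is continuous in time), apply Lemma~\ref{l:a_est} with the identification $u' = u$, $v' = u_{h\tau}$, $w' = u_{h\tau}+(u-\mathcal{U})$ to obtain
\[
a(u-u_{h\tau},u-\mathcal{U}) \geq \tfrac{\alpha}{4}\bigl(\|u-u_{h\tau}\|_H^2 + \|u-\mathcal{U}\|_H^2\bigr) - \tfrac{C}{2}\|u_{h\tau}-\mathcal{U}\|_H^2.
\]
The residual pairing $\langle r_{h\tau}-\lambda_{H\tau},u-\mathcal{U}\rangle$ is bounded by Cauchy--Schwarz and Young's inequality, absorbing a small multiple of $\|u-\mathcal{U}\|_H^2$ into the left. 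The $b$-term is transferred to the right-hand side with reversed sign, producing $b(\lambda_{H\tau}-\lambda,u-\mathcal{U})$ exactly as in the statement; this is the problem-specific contribution, to be estimated case-by-case in the spirit of Lemma~\ref{l:b}.

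\textbf{Dual norm term and main obstacle.} For the $H^*$-norm of $\partial_t(u-\mathcal{U})+(\lambda-\lambda_{H\tau})$, return to the residual identity: rearranging and using continuity of $a$ yields
\[
\|\partial_t(u-\mathcal{U})+(\lambda-\lambda_{H\tau})\|_{H^*} \lesssim \|r_{h\tau}-\lambda_{H\tau}\|_{H^*} + \|u-u_{h\tau}\|_H,
\]
and squaring and integrating in time bounds this contribution by quantities already controlled. The principal technical difficulty is precisely the step of extracting both $\|u-u_{h\tau}\|_H^2$ and $\|u-\mathcal{U}\|_H^2$ from the single mixed quantity $a(u-u_{h\tau},u-\mathcal{U})$ while producing only the computable reconstruction remainder $\|u_{h\tau}-\mathcal{U}\|_H^2$: Lemma~\ref{l:a_est} is tailor-made for this purpose. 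A secondary subtlety is the reduction of the jump-plus-derivative DG structure to a continuous-in-time formulation via $\mathcal{U}$, which ensures that the telescoping in the time derivative term leaves no uncontrolled jump contributions and cleanly yields the $L^2(\Omega)$ boundary values at $t = 0$ and $t = T$.
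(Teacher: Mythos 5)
Your proposal is correct and follows essentially the same route as the paper: the pointwise residual identity (which in the paper is just the continuous equation combined with the definition of $r_{h\tau}$), testing with $v=u-\mathcal{U}$, the application of Lemma~\ref{l:a_est} with exactly the identification you give to split $a(u-u_{h\tau},u-\mathcal{U})$ into the two squared norms plus the computable remainder $\|u_{h\tau}-\mathcal{U}\|_H^2$, and the recovery of the dual-norm term from the same identity via continuity of $a$. The only cosmetic difference is that you present the identity as a subtraction of the reconstructed discrete scheme from the continuous equation, whereas it already holds for all $v\in H$ directly from the definition of the residual; this does not affect the argument.
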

\begin{proof}

We note that for all $v \in H$,
\begin{equation}\label{e:Galerkin}
\langle r_h, v \rangle - b(\lambda_{H\tau},v)  = \langle \partial_t(u-\mathcal{U},v\rangle + a (u-u_h, v) + b(\lambda - \lambda_{H\tau},v).
\end{equation}
By choosing $v = u - \mathcal{U}$ we obtain 
\begin{equation*}
\frac{1}{2} \frac{\mathrm{d}}{\dt} \|u-\mathcal{U}\|^2_{L^2(\Omega)} + a(u-u_{h\tau}, u - \mathcal{U}) = \langle r_h - \lambda_{H\tau}, u -\mathcal{U} \rangle + b(\lambda_{H\tau} - \lambda,u - \mathcal{U}).
\end{equation*}
Using the estimate from Lemma~\ref{l:a_est}, we obtain
\begin{equation*}
\begin{split}
\frac{1}{2} \frac{\mathrm{d}}{\dt} \|u-\mathcal{U}\|^2_{L^2(\Omega)} &+ \frac{\alpha}{4} \left( \|u-v\|_{H}^2 + \|w-v\|_{H}^2\right)\\ &\leq \frac{C}{2}\|u-w\|_{H}^2 + \langle r_h - \lambda_{H\tau}, u -\mathcal{U} \rangle + b(\lambda_{H\tau} - \lambda,u - \mathcal{U}).
\end{split}
\end{equation*}
Additionally, using \eqref{e:Galerkin} and continuity of the bilinear pairs gives
\begin{equation*}
\|\partial_t (u - \mathcal{U}) + (\lambda-\lambda_{H\tau}) \|_{H^{*}}^2 \lesssim \|r_h - \lambda_{H\tau}\|^2_{H^{*}} + \|u-u_{h\tau}\|^2_{H}.
\end{equation*}
Combining these estimates and integrating in time yields the result.
\end{proof}

\begin{rem}
We note that the last term is not yet computable. For a specific problem, we use different estimates dependent on the precise formulation of the problem. As an example of treatment of such term see \cite{BanzHeiko} for contact problems. Noting that the integration in time over time does not introduce any additional issues we can treat the $b(\cdot,\cdot)$ terms is similar as in the case of the elliptic problems and only then integrate in time. Therefore, in order to avoid repetition, we refer the reader to Lemma~\ref{l:b} and only state the resulting computable error estimates for the $b(\cdot,\cdot)$ term.
\end{rem}
\begin{lem}\label{l:b_para}
Let $(u,\lambda),\;(u_{h\tau},\lambda_{H\tau})$ be solutions of Problems~\ref{p:para_mixed} and \ref{p:para_mixed_disc}, respectively, associated with problem~\ref{prob:obst}--\ref{prob:friction}. Suppose that $\widetilde{\Lambda}_{H\tau} \subset \widetilde{\Lambda}$. Then, for the respective problems,
\begin{align*}
(i)\;\sum_{k=1}^M \int_{I_k} b(\lambda_{H\tau}-\lambda,u-\mathcal{U}) \dt \leq & \sum_{k=1}^M \int_{I_k}b(\lambda_{H\tau},\chi - \mathcal{U}) \dt,\\
(ii)\;\sum_{k=1}^M \int_{I_k} b(\lambda_{H\tau}-\lambda,u-\mathcal{U}) \dt \leq & \sum_{k=1}^M \int_{I_k}b(\lambda_{H\tau},g-\mathcal{U}) \dt,\\
\begin{split}
(iii)\;\sum_{k=1}^M \int_{I_k} b(\lambda_{H\tau}-\lambda,u-\mathcal{U}) \dt \leq &\| (|\lambda_{H\tau}| - \mathcal{F})^+ \|_{L^2(0,T;H^{-s}(S))} \|u-\mathcal{U}\|_{L^2(0,T;H^s_0(S))} \\ &- \sum_{k=1}^M \int_{I_k} b((|\lambda_{H\tau}|-\mathcal{F})^-, |\mathcal{U}|)+ b(|\lambda_{H\tau}|, |\mathcal{U}|) - b(\lambda_{H\tau},\mathcal{U})\dt,
\end{split}\\
\begin{split}
(iv)\;\sum_{k=1}^M \int_{I_k} b(\lambda_{H\tau}-\lambda,u-\mathcal{U}) \dt \leq &\| (|\lambda_{H\tau}| - \mathcal{F})^+ \|_{L^2(0,T;H^{1/2-s}(\widetilde{\Gamma}))} \|u-\mathcal{U}\|_{L^2(0,T;H^{s-1/2}(\widetilde{\Gamma}))} \\ &- \sum_{k=1}^M \int_{I_k} b((|\lambda_{H\tau}|-\mathcal{F})^-, |\mathcal{U}|)+ b(|\lambda_{H\tau}|, |\mathcal{U}|) - b(\lambda_{H\tau},\mathcal{U}) \dt.
\end{split}
\end{align*}
for the parabolic version of problems~\ref{prob:obst}--\ref{prob:friction}, respectively.
\end{lem}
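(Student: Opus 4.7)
The plan is to adapt the elliptic estimates of Lemma~\ref{l:b} pointwise in time and then integrate over $(0,T)$. The key observation is that the cone conditions defining $\widetilde{\Lambda}$ (as in the theorem surrounding Problem~\ref{p:para_mixed}) and $\widetilde{\Lambda}_{H\tau}$ (Remark~\ref{t:para_equivalence_disc}) hold a.e.\ in $t \in (0,T)$; similarly $u(t) \in K$ a.e.\ and $\mathcal{U}(t) \in \mathbb{V}_h$ for each $t$. Consequently the sign and complementarity arguments that drove Lemma~\ref{l:b} can be invoked pointwise in $t$, with $\mathcal{U}(t)$ playing the role of $u_h$ and $\lambda_{H\tau}(t)$ the role of $\lambda_H$.

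For cases (i) and (ii), I would first split
\[
b(\lambda_{H\tau} - \lambda, u - \mathcal{U}) = b(\lambda_{H\tau}, u - w) + b(\lambda_{H\tau}, w - \mathcal{U}) - b(\lambda, u - w) - b(\lambda, w - \mathcal{U})
\]
with $w = \chi$ (resp.\ $g$ on $\widetilde{\Gamma}$). The complementarity $b(\lambda, u - w) = 0$, obtained from Problem~\ref{p:para_mixed}(b) by the two admissible choices $\mu = 0$ and $\mu = 2\lambda$, kills one term; the cone property of $\widetilde{\Lambda}_o$ (resp.\ $\widetilde{\Lambda}_s$) combined with $u \geq w$ yields $b(\lambda_{H\tau}, u - w) \leq 0$ and $b(\lambda, w - \mathcal{U}) = b(\lambda_{H\tau}, w-\mathcal{U}) - b(\lambda_{H\tau}-\lambda,w-\mathcal{U})$ collapses to the desired right-hand side exactly as in the elliptic calculation. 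Integrating over $(0,T)$ gives (i) and (ii).

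For the friction cases (iii) and (iv), I would rerun the chain of inequalities from the proof of Lemma~\ref{l:b}(iii)--(iv) at each $t$: the relations $b(\lambda, u) = j(u)$ and $b(\lambda, \mathcal{U}) \leq j(\mathcal{U})$ come from $\lambda \in \widetilde{\Lambda}_I$ (resp.\ $\widetilde{\Lambda}_{\widetilde{\Gamma}}$) pointwise in time, and the decomposition into terms involving $(|\lambda_{H\tau}|-\mathcal{F})^\pm$ is purely algebraic and oblivious to the time variable. The one genuinely time-integrated step is the final pairing, which by Cauchy--Schwarz in time gives
\[
\int_0^T b((|\lambda_{H\tau}|-\mathcal{F})^+, |u - \mathcal{U}|) \dt \leq \|(|\lambda_{H\tau}|-\mathcal{F})^+\|_{L^2(0,T;H^{-s}(S))}\,\|u - \mathcal{U}\|_{L^2(0,T;H^s_0(S))},
\]
together with its $\widetilde{\Gamma}$-analogue; these produce the mixed space-time norms appearing in (iii) and (iv).

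The main technical point, and the only one not already present in Lemma~\ref{l:b}, is justifying that the cone and complementarity relations really hold for a.e.\ $t \in (0,T)$. This follows because $\widetilde{\Lambda}$ and $\widetilde{\Lambda}_{H\tau}$ are defined through a.e.-in-$t$ conditions, and because admissible variations of the form $\mu(t) = \tilde{\mu}\,\mathbf{1}_{A}(t)$, for $\tilde{\mu} \in \Lambda$ and measurable $A \subset (0,T)$, localize Problem~\ref{p:para_mixed}(b) to a pointwise inequality. No extra regularity beyond that provided by the mixed formulation is required, which is why the same template handles all four problem classes in parallel.
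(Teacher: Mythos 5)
Your proposal is correct and follows essentially the same route as the paper, which itself gives no separate argument but states (in the remark preceding the lemma) that the $b(\cdot,\cdot)$ terms are treated exactly as in the elliptic Lemma~\ref{l:b} — with $\mathcal{U}$ in the role of $u_h$ and $\lambda_{H\tau}$ in that of $\lambda_H$ — and only then integrated in time. Your additional justification that the cone and complementarity relations localize to a.e.\ $t$ via variations $\mu(t)=\tilde{\mu}\,\mathbf{1}_A(t)$, and your identification of the time-Cauchy--Schwarz step as the only genuinely time-integrated ingredient in (iii)--(iv), fill in details the paper leaves implicit.
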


\begin{rem}
As the discrete constraint $\lambda_{H\tau}$ is imposed on a coarser mesh, in order to simplify the implementation, it would be useful to be able to impose $\lambda_{H\tau}$ on the same mesh as the solution $u_{h\tau}$. To this end one could try to attack this problem using stabilization techniques as discussed for example in \cite{BarrenecheaChouly}.
\end{rem}

\section{Algorithmic aspects}\label{sec:implementation}
In this section we address the implementation of the bilinear form $a(\cdot,\cdot)$ associated with the fractional Laplacian, an Uzawa algorithm for the solution of the variational inequality, as well as adaptive mesh refinement procedures. \\  In the nodal basis $\lbrace \phi_i\rbrace$ of $\mathbb{V}_h$ the stiffness matrix $K = (K_{ij})$ is given by
\begin{equation*}
K_{ij} = a(\phi_i, \phi_j) =  c_{n,s} \iint_{\mathbb{R}^n \times \mathbb{R}^n} \frac{(\phi_i(x)-\phi_i(y))(\phi_j(x)-\phi_j(y))}{|x-y|^{n+2s}} \mathrm{d}y \, \mathrm{d}x\ .
\end{equation*}
Noting that interactions in $\Omega \times \Omega^C$ and $\Omega^C \times \Omega$ are symmetric,
\begin{equation*}
\begin{split}
K_{ij} = c_{n,s} \iint_{\Omega \times \Omega}& \frac{(\phi_i(x)-\phi_i(y))(\phi_j(x)-\phi_j(y))}{|x-y|^{n+2s}} \mathrm{d}y \, \mathrm{d}x \\ &+ 2 c_{n,s} \iint_{\Omega \times \Omega^C} \frac{\phi_i(x)\phi_j(x)}{|x-y|^{n+2s}} \mathrm{d}y \, \mathrm{d}x\ .
\end{split}
\end{equation*}
The first integral is computed using a composite graded quadrature as standard in boundary element methods \cite[Chapter 5]{SauterSchwabBook}. This method splits the integral into singular and regular parts. It converts the integral over two elements into an integral over $[0,1]^4$ and resolves the singular part with an appropriate grading. The singular part can be computed explicitly, and for the regular part we employ numerical quadrature. The second integral can be efficiently computed by numerical quadrature after transforming it into polar coordinates. For a discussion of the quadrature see \cite{Acosta2}.\\
In order to solve problems associated to the mixed formulation of variational inequalities, we use an Uzawa algorithm similar to \cite{HeikoTransmission}.  Let $P_{\Lambda_H}$ be the orthogonal projection onto $\Lambda_H$. In practice we choose $H = 2h$; stabilized methods with $H = h$ will be the content of future work.

\begin{alg}[Uzawa]
Inputs: Choose $\lambda_H^0  \in \Lambda_H$.
\begin{enumerate}
\item For $n\geq 0$ find $u_h^n \in \mathbb{V}_h$ such that
\begin{equation*}
a(u_h^n,v_h) + b(\lambda_H^n,v_h) = \langle f, v_h \rangle
\end{equation*}
for all $v_h \in \mathbb{V}_h$.
\item For appropriately chosen $\rho>0$ set
\begin{equation*}
\lambda_H^{n+1} = P_{\Lambda} (\lambda_H^n + \rho (B u_h^n - g))
\end{equation*}
\item Repeat 1. and 2. until convergence criterion is satisfied.
\end{enumerate}
Output: Solution $(u_h^{n+1}, \lambda_h^{n+1})$.
\end{alg}
\noindent Note that for the time-dependent problems, $f$ involves information from the previous time step.\\

\textcolor{black}{Because the bilinear form $a$ is coercive with coercivity constant $\alpha$, a standard argument shows that the Uzawa algorithm converges for $0<\rho <2 \alpha$. See, for example, Lemma 22 in \cite{HeikoContact}. The optimal choice for the parameter $\rho$ is $\frac{2}{\lambda_{max} + \lambda_{min}}$, where $\lambda_{max}, \ \lambda_{min}$ correspond to the largest, respectively smallest eigenvalues of $BA^{-1}B^{T}$, and this value for $\rho$ is used in the numerical experiments below.}\\

The adaptive algorithm follows the established sequence of steps:
\begin{equation*}
\mathrm{SOLVE}\to\mathrm{ESTIMATE}\to\mathrm{MARK}\to\mathrm{REFINE}.
\end{equation*}

The precise algorithm for time-independent problems is given as follows:
\begin{alg}[Adaptive algorithm 1]\label{Alg:Adaptive}
Inputs: Spatial meshes $\mathcal{T}_h$ and $\mathcal{T}_H$, refinement parameter $\theta \in (0,1)$, tolerance $\varepsilon >0$, data $f$.
\begin{enumerate}
\item Solve problem \ref{p:ell_mixed_disc}, for $(u_h, \lambda_H)$ on $\mathcal{T}_h \times \mathcal{T}_H$.
\item Compute error indicators $\eta(\Delta)$ in each triangle $\Delta \in \mathcal{T}$.
\item Find $\eta_{max} = \max_{\Delta} \eta(\Delta).$
\item Stop if $\sum_{i} \eta(\Delta_i) \leq \varepsilon$.
\item Mark all $\Delta$ with $\eta(\Delta_i) > \theta \eta_{max}$.
\item Refine each marked triangle to obtain new mesh $\mathcal{T}_h$.
\item Repeat until convergence criterion is satisfied.
\end{enumerate}
Output: Solution $(u_{h}, \lambda_{H})$.
\end{alg}

\textcolor{black}{
Here, we define the local error indicators $\eta(\Delta)$ for all elements $\Delta$ using the right hand side of the a posteriori estimate of Theorem~\ref{th:APostEll}. We approximate the dual norm $\| \mu_H \|_{H^{-\alpha}}$ by the scaled $L^2$-norm $H^{\alpha} \|\mu_H\|_{L^2}$ as well as $\|v_h\|_{H^{\alpha}}$ by $h^{-\alpha}\|v_h\|_{L^2}$ for $\alpha>0$, as standard for boundary element methods \cite{BanzHeiko}:
\begin{equation*}
\eta(\Delta)^2 = \sum_{\substack{S_i \cap \Delta \neq \varnothing \\ i \notin \mathcal{C}_h}} h_i^{2s} \|(r_h-\lambda_H-\bar{r}_{h}+\bar{\lambda}_{H})\phi_i\|^2_{L^2(S_i)} + b(\lambda_H-\lambda,u-u_h)\ .
\end{equation*}
Here, $\bar{r}_{h} = \frac{\int_{S_i}r_h\phi_i}{\int_{S_i}\phi_i}$ for the interior nodes $i \in \mathcal{P}_h$, and $\bar{r}_h=0$ otherwise. Similarly, $\bar{\lambda}_{H}= \frac{\int_{S_i}\lambda_H\phi_i}{\int_{S_i}\phi_i}$ for the interior nodes $i \in \mathcal{P}_h$, and $\bar{\lambda}_H=0$ otherwise. 
The bilinear form $b$ is estimated using Lemma~\ref{l:b} for the given problem. All integrals are evaluated using a numerical Gauss-Legendre quadrature.}\\

An algorithm for time-dependent problems is given by:
\begin{alg}[Adaptive algorithm 2]\label{Alg:AdaptiveP}
Inputs: Space-time meshes $\mathcal{S}_h =\mathcal{T}_h \times \bigcup_k I_k$ and $\mathcal{S}_H =\mathcal{T}_H \times \bigcup_k I_k$, refinement parameter $\theta \in (0,1)$, tolerance $\varepsilon >0$, data $f, u_0$.
\begin{enumerate}
\item Solve the problem \ref{p:para_mixed_disc}, for $(u_{h\tau}, \lambda_{H\tau})$ on $\mathcal{S}_h \times \mathcal{S}_H$.
\item Compute error indicators $\eta(\Delta)$ in each space-time prism $\Delta \in \mathcal{S}_h $.
\item Find $\eta_{max} = \max_{\Delta} \eta(\Delta).$
\item Stop if $\sum_{i} \eta(\Delta_i) \leq \varepsilon$.
\item Mark all $\Delta$ with $\eta(\Delta_i) > \theta \eta_{max}$.
\item Refine each marked space-time prism to obtain new mesh $\mathcal{S}_h$, keeping $\frac{\tau}{h^{2s}}$ fixed.
\item Repeat until convergence criterion is satisfied.
\end{enumerate}
Output: Solution $(u_{h\tau}, \lambda_{H\tau})$.
\end{alg}

\textcolor{black}{The error indicators $\eta(\Delta)$ are evaluated analogous to the time-independent case, using the right hand side of Theorem~\ref{l:para_a} and Lemma~\ref{l:b_para} to estimate the bilinear form $b$ for the given problem:
\begin{equation*}
\begin{split}
\eta(\Delta)^2 = \sum_{\substack{S_i \cap \Delta \neq \varnothing \\ i \notin \mathcal{C}_h}} \left( \sum_{k=1}^M \tau_k h_i^{2s} \|r_{h\tau}-\lambda_{H\tau} -\bar{r}_{h\tau}+\bar{\lambda}_{H\tau} \|_{L^2(S_i)}^2 + \tau_k h_i^{-2s} \|u^k_{h\tau} - u^{k-1}_{h\tau} \|_{L^2(S_i)}^2 \right. \\
\left.+\|(u-\mathcal{U})(0)\|_{L^2(S_i)}^2 + \sum_{k=1}^M \tau_k \, b(\lambda_{H\tau}-\lambda,u-\mathcal{U})\right).
\end{split}
\end{equation*}}

\begin{rem}
Special attention has to be paid to evaluation of $(-\Delta)^s u_h$, which is a part of the residual $r_h$. Pointwise values can be computed at the quadrature points of an appropriate quadrature rule, see \cite{Glusa}. Evaluation of the negative Sobolev norm in the a posteriori estimates is done by localization of the norm and extraction of powers of $h$ see \cite{Carstensen1, Nochetto}.
\end{rem}

\section{Numerical results}\label{sec:Numerics}

This section illustrates the a posteriori error estimates from Theorem~\ref{th:APostEll} and \ref{l:para_a} and shows the efficiency of the resulting adaptive mesh refinements from Algorithms~\ref{Alg:Adaptive} and \ref{Alg:AdaptiveP}.\\

\subsection{Time-independent problems}
Before doing so, we consider as a reference the fractional Laplace equation.
\begin{align}\label{e:FL}
(-\Delta)^s u &= f \;\mathrm{in}\; \Omega.\\
u &= 0 \;\mathrm{in}\; \Omega^C.
\end{align}
Its weak formulation reads:\\
Find $u \in H^s_0(\Omega)$ such that
\begin{equation}\label{e:FLweak}
a(u,v) = \int_{\Omega} f v \mathrm{d}x,
\end{equation}
for all $v \in H^s_0(\Omega)$. 

\begin{example}\label{ex:Equation}
We consider the fractional Laplace equation \eqref{e:FL} in $\Omega = B_1(0)$ with  $s=0.5$ and $f=1$. The exact solution is given by $u(x) = (1-\vert x \vert^2)^s_{+}$. We compare the solution to the Galerkin solution to \eqref{e:FLweak} by piecewise linear finite elements on uniform, graded, and adaptively refined meshes to the exact solution. Figure \ref{f:equalgra} shows the numerical solution on a $2$-graded mesh. Figure \ref{f:Convergence_eq} plots the error in the $H^s(\Omega)$ norm for the different meshes in terms of the degrees of freedom. The rate of convergence in terms of degrees of freedom is $ -0.252$ for uniform meshes, $-0.540$ for $2$-graded meshes, and $-0.510$ for adaptively generated meshes. This corresponds to a convergence rate of $0.504$ (uniform), $1.08$ ($2$-graded), respectively $1.02$ (adaptive), in terms of the mesh size $h$. For the uniform and graded meshes this is in agreement with the theoretically predicted rates of $0.5$ (uniform) and $1.0$ ($2$-graded), respectively. For the adaptive algorithm it agrees with the rates observed for integral operators in stationary and time-dependent problems \cite{Acosta1, Carstensen1, Gimperlein2018}.
\end{example}

\begin{figure}
\center
\includegraphics[width = 0.7\textwidth]{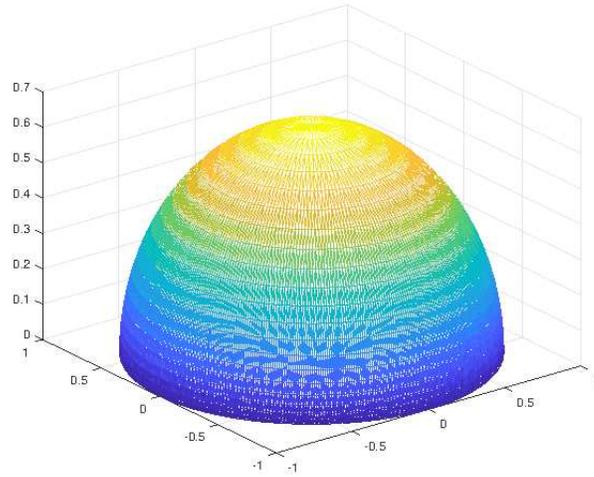}
\caption{Solution $u_h$ of the fractional Laplace equation with $s=0.5$ on graded meshes.}
\label{f:equalgra}
\end{figure}
\begin{figure}
\center
\includegraphics[width = 0.7\textwidth]{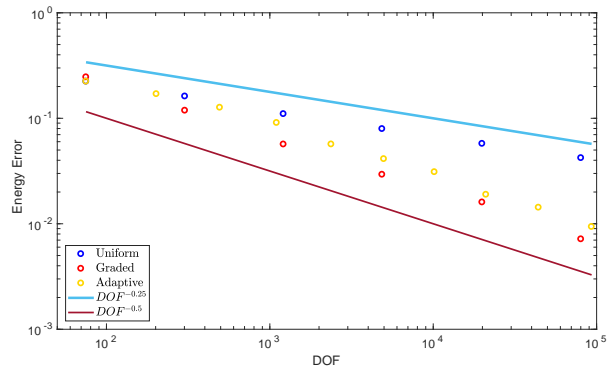}
\caption{Error in the energy norm for the fractional Laplace equation with $s=0.5$.}
\label{f:Convergence_eq}
\end{figure}

We now consider an elliptic obstacle problem:
\begin{example}\label{ex:1}
We consider the mixed formulation of the fractional obstacle Problem \ref{p:ell_mixed} in $\Omega = B_1(0)$ with  $s=0.5$, $f = 1$ and obstacle $\chi$ depicted in Figure~\ref{f:Obstacle_chi}.  We compare the Galerkin solution to \eqref{p:ell_mixed_disc} by piecewise linear finite elements on uniform, graded, and adaptively refined meshes with a benchmark solution on an adaptively generated mesh with $237182$ degrees of freedom. Figures~\ref{f:UniformObstacle} and \ref{f:AdaptiveObstacle} show the numerical solutions on a uniform and on an adaptively refined mesh, respectively. \textcolor{black}{Note that due to the strong boundary singularity of the solution the adaptive refinement is particularly strong near the boundary, as well as near the free boundary. This observation underlines the recent analysis of the obstacle problem in \cite{Salgado}.} Figure~\ref{f:Convergence_ineq_ell} shows the error in the $H^s(\Omega)$ norm for the different meshes in terms of the degrees of freedom. The error indicators capture the slope of the error in the adaptive procedure, indicating the efficiency and reliability of the a posteriori error estimates. The rate of convergence in terms of degrees of freedom is $-0.245$ for uniform meshes, $-0.498$ for $2$-graded meshes, and $-0.363$ for adaptively generated meshes. This corresponds to a convergence rate of $0.490$ (uniform), $0.996$ ($2$-graded) in terms of the mesh size $h$. We note that the graded meshes double the convergence rate of the uniform meshes, as has been recently discussed in \cite{Salgado} for the obstacle problem. Figure~\ref{f:MeshesEl} depicts the 1, 3, 8 and 15th mesh created by the adaptive algorithm. They show strong refinement near the boundaries, as well as refinement near the contact boundary.
\end{example}

\begin{rem}
Algebraically graded meshes are known to lead to quasioptimal convergence rates for the boundary singularities near the Dirichlet boundary. However, for large grading parameter their accuracy for integral equations is often limited by floating point errors, and related works consider $2$-graded meshes \cite{Acosta1}. Furthermore, graded meshes do not refine near the free boundary, which becomes relevant for the absolute size of the error as in Figures~\ref{f:ErIndVsTime} and \ref{f:para_conv}, even if not for the convergence rate.\\
On the other hand, the flexibility of adaptive meshes in complex geometries proves useful in applications. Adaptively generated meshes moreover resolve space-time inhomogeneities and singularities of solutions, as seen in Figures~\ref{f:ErIndVsTime} and \ref{f:para_conv}. Even though adaptive meshes are locally quasi-uniform, the associated convergence rates can be slower than for the anisotropic graded meshes, which involve arbitrarily thin triangles near the boundary. A heuristic explanation for the substantially higher rates of anisotropic graded meshes can be found in \cite{Carstensen1}.
\end{rem}

\begin{figure}
\centering
\includegraphics[width = .7\textwidth]{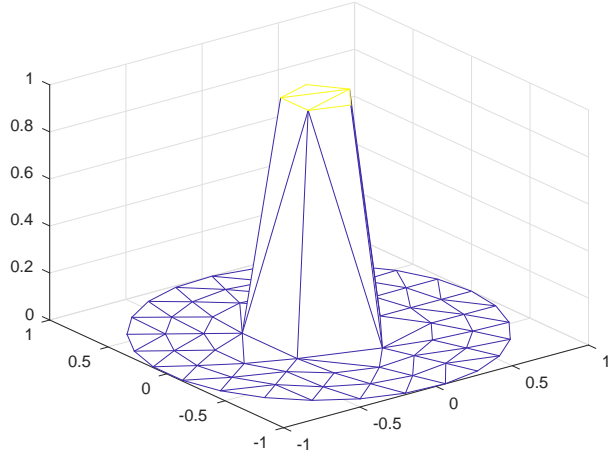}
\caption{Obstacle $\chi$ for the elliptic and parabolic obstacle problem.}
\label{f:Obstacle_chi}
\end{figure}

\begin{figure}
\center
\includegraphics[width = 0.7\textwidth]{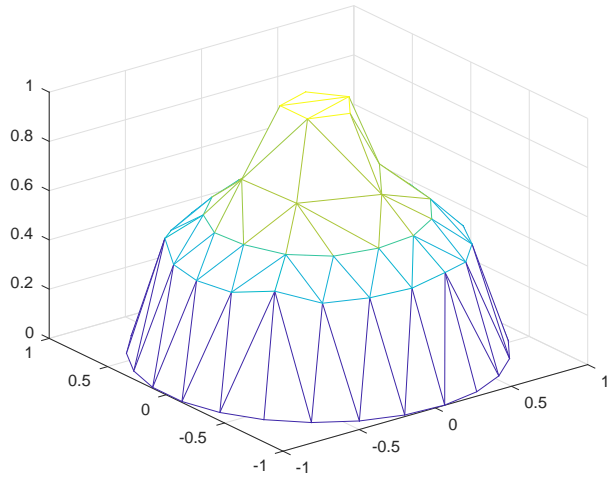}
\caption{Solution $u_h$ of Example~\ref{ex:1} on uniform mesh.}
\label{f:UniformObstacle}
\end{figure}
\begin{figure}
\center
\includegraphics[width = 0.7\textwidth]{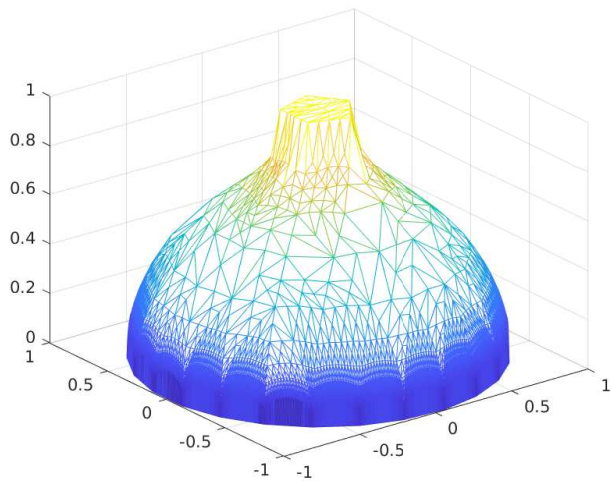}
\caption{Solution $u_h$ of Example~\ref{ex:1} using adaptive mesh refinement.}
\label{f:AdaptiveObstacle}
\end{figure}
\begin{figure}
\center
\includegraphics[width = \textwidth]{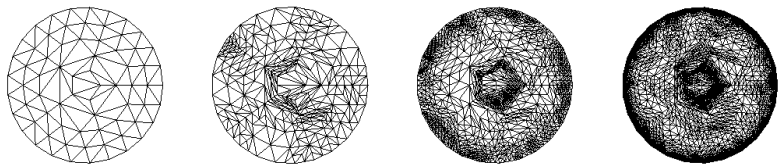}
\caption{Adaptively refined meshes for Example~\ref{ex:1} after $0,\;2,\;7,\;14$ refinements.}
\label{f:MeshesEl}
\end{figure}

\begin{figure}
\center
\includegraphics[width = 0.7\textwidth]{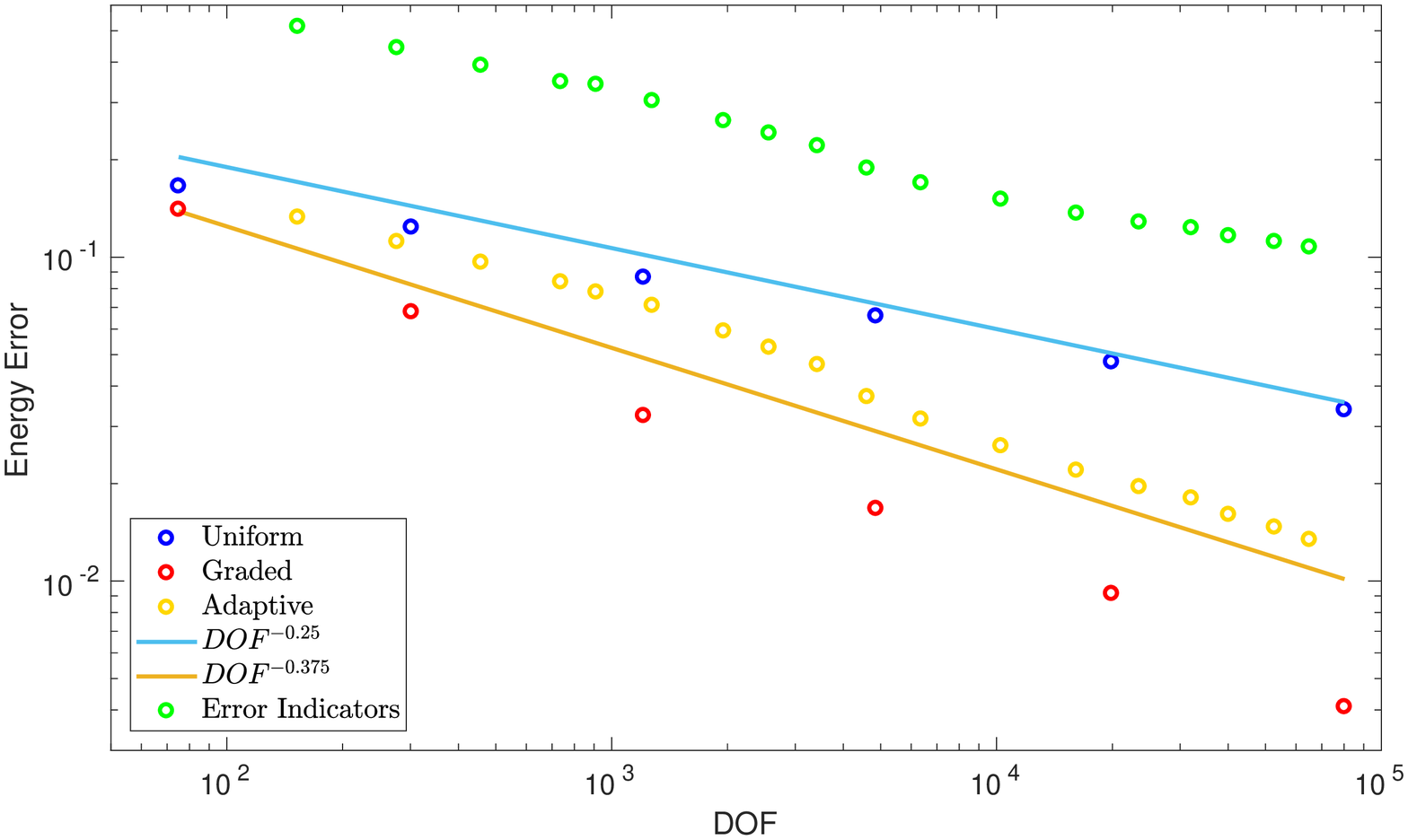}
\caption{Error in the energy norm for the variational inequality in Example~\ref{ex:1}.}
\label{f:Convergence_ineq_ell}
\end{figure}
\begin{example}\label{ex:ElFrict}
We consider the mixed formulation of the fractional friction problem \eqref{p:ell_mixed} in $\Omega = B_1(0)$, with $s = 0.6$ and $f = 1$. We compare the Galerkin solution to \eqref{p:ell_mixed_disc} by piecewise linear finite elements on uniform, graded, and adaptively refined meshes with a benchmark solution on an adaptively generated mesh with $228140$ degrees of freedom. Figure~\ref{f:ElFrict} displays the solution of the friction problem. Note that the Lagrange multiplier is discontinuous in places where $u$ changes sign. Figure~\ref{f:ElFrictConv} shows the error in the $H^s(\Omega)$ norm for different meshes in terms of the degrees of freedom. The rate of convergence in terms of degrees of freedom is $-0.220$ for uniform meshes, $-0.454$ for $2$-graded meshes, and $-0.429$ for adaptively generated meshes. This corresponds to a convergence rate of $0.440$ (uniform), $0.908$ ($2$-graded) in terms of the mesh size $h$.
\end{example}

\begin{figure}
\centering
\includegraphics[width = \textwidth]{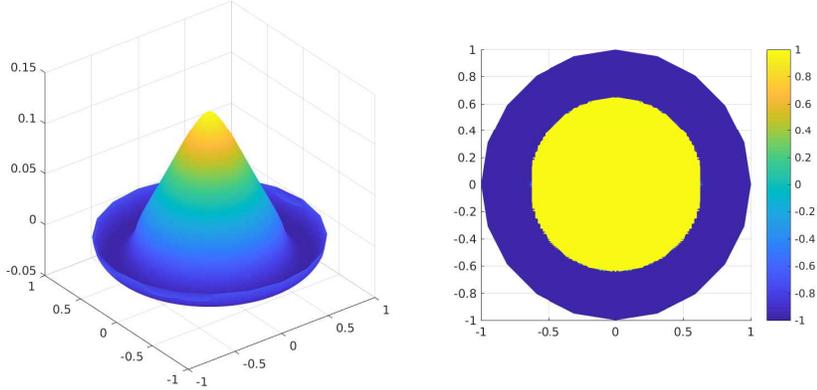}
\caption{Solution $u_h$ (left) and $\lambda_H$ (right) of Example~\ref{ex:ElFrict} on a mesh with $142719$ degrees of freedom.}
\label{f:ElFrict}
\end{figure}

\begin{figure}
\centering
\includegraphics[width = 0.7\textwidth]{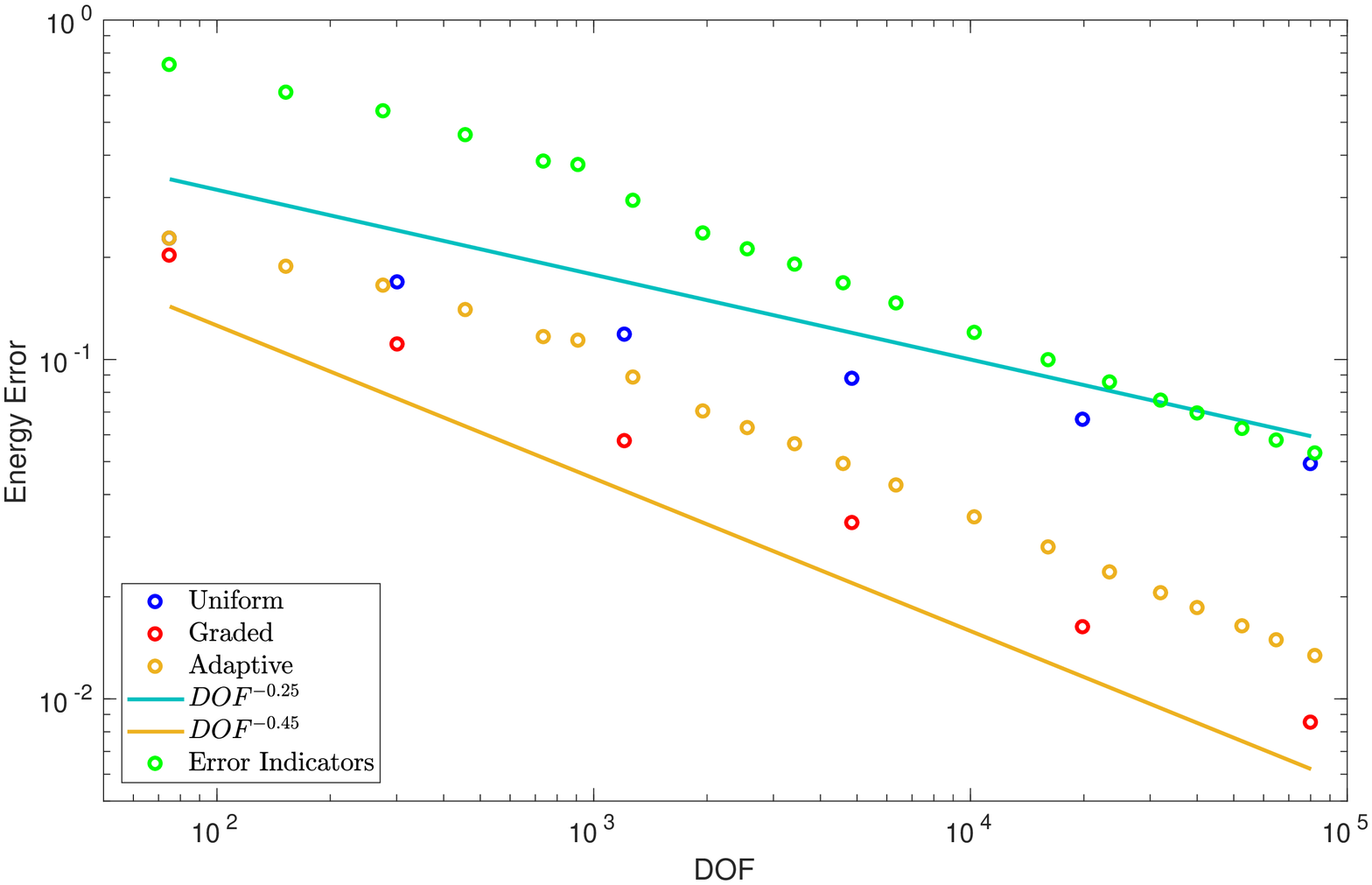}
\caption{Error in the energy norm for the variational inequality in Example~\ref{ex:ElFrict}.}
\label{f:ElFrictConv}
\end{figure}

\subsection{Dynamic contact problems}
In order to keep $\frac{\tau}{h^{2s}}$ fixed, we choose the time step $\tau \approx h^{2s}$ for uniform meshes, $\tau \approx 0.5\,h_{max}^{2s}$ for graded meshes, and local time steps $\tau \approx h^{2s}$ for adaptively generated meshes.
We first  consider a parabolic obstacle problem:
\begin{example}\label{ex:3}
We consider the mixed formulation of the fractional obstacle problem \eqref{p:para_mixed} in $\Omega = B_1(0)$, with $s = 0.5$, $f = 0$, and two different initial conditions $u_0 = 1$, $\tilde{u}_0 = 2$. We set  $T = 1$, and the obstacle $\chi$ is defined as in Example~\ref{ex:1} and depicted in Figure~\ref{f:Obstacle_chi}. We compare the Galerkin solution to \eqref{p:para_mixed_disc} by piecewise linear finite elements on uniform, graded and adaptively refined meshes with a benchmark solution on an adaptively generated mesh with $29894663$ degrees of freedom. Figure~\ref{f:ParaObstPlot} displays the solution of the obstacle problem at time $T=1$ on an adaptively refined mesh. Figure~\ref{f:para_conv} shows the error in the $H^s(\Omega)$ norm for different meshes in terms of the degrees of freedom. Again error indicators capture the slope of the error in the adaptive procedure. The rate of convergence in terms of degrees of freedom is $-0.173$ for uniform meshes, $-0.325$ for $2$-graded meshes, and $-0.319$ for adaptively generated meshes. This corresponds to a convergence rate of $0.519$ (uniform), $0.975$ ($2$-graded) in terms of the mesh size $h$. Figure~\ref{f:MeshesPara1} depicts the slices at $t = 0,\;0.4,\;0.8,\;1$ of the meshes $3,8,15$ created by the adaptive algorithm. They show strong refinement near the boundaries, as well as refinement near the contact boundary. Figure~\ref{f:ErIndVsTime} shows the error indicators of the adaptive algorithm in time for several iterations with the initial condition $\tilde{u}_0 = 2$. While for the initial condition $u_0 = 1$ the contact with the obstacle is present from time $t=0$, for initial condition $\tilde{u}_0$ the solution first touches the obstacle at time $t \approx 0.5$. The error increases rapidly at the time of first contact with the obstacle. After several iterations the adaptive algorithm equilibrates the error in space and time by refinements of space-time mesh, as shown in Figure~\ref{f:ErIndVsTime}.
\end{example}
Like for the elliptic problems, we note that the convergence closely mirrors the theoretical convergence rates \cite{Salgado}.\\

\begin{figure}
\center
\includegraphics[width = 0.7\textwidth]{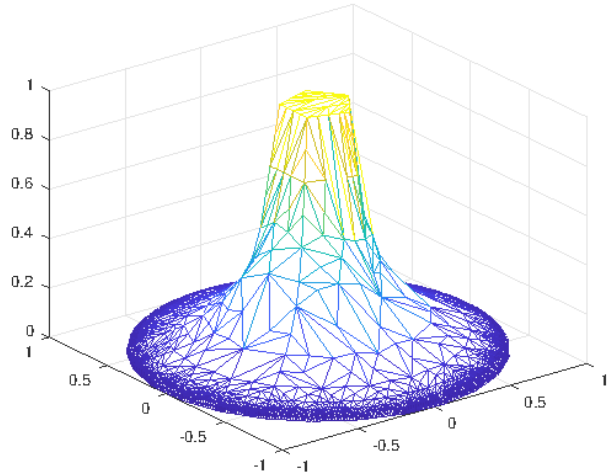}
\caption{Solution $u_{h\tau}$ of the parabolic obstacle problem from Example~\ref{ex:3} at $T = 1$.}
\label{f:ParaObstPlot}
\end{figure}
\begin{figure}[!ht]
\centering
\includegraphics[width = 0.8\textwidth]{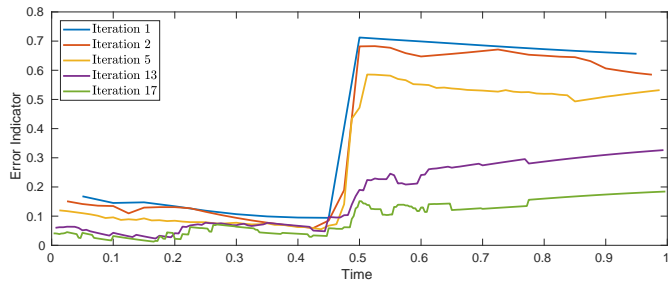}
\caption{Error indicators of adaptively refined meshes in time with the initial condition given by $\tilde{u}_0$.}
\label{f:ErIndVsTime}
\end{figure}

\begin{figure}[!ht]
\centering
\subfloat[]{\includegraphics[width = .33\textwidth]{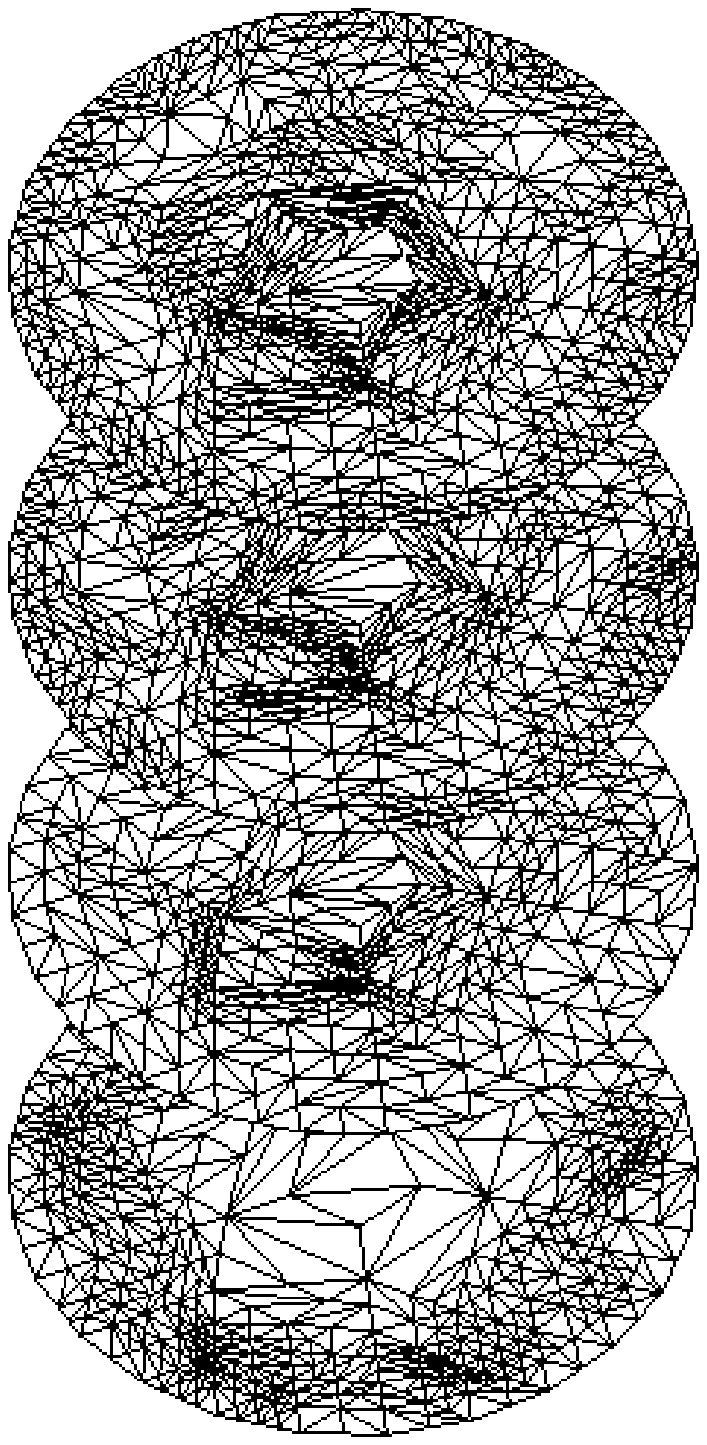}}
\subfloat[]{\includegraphics[width = .33\textwidth]{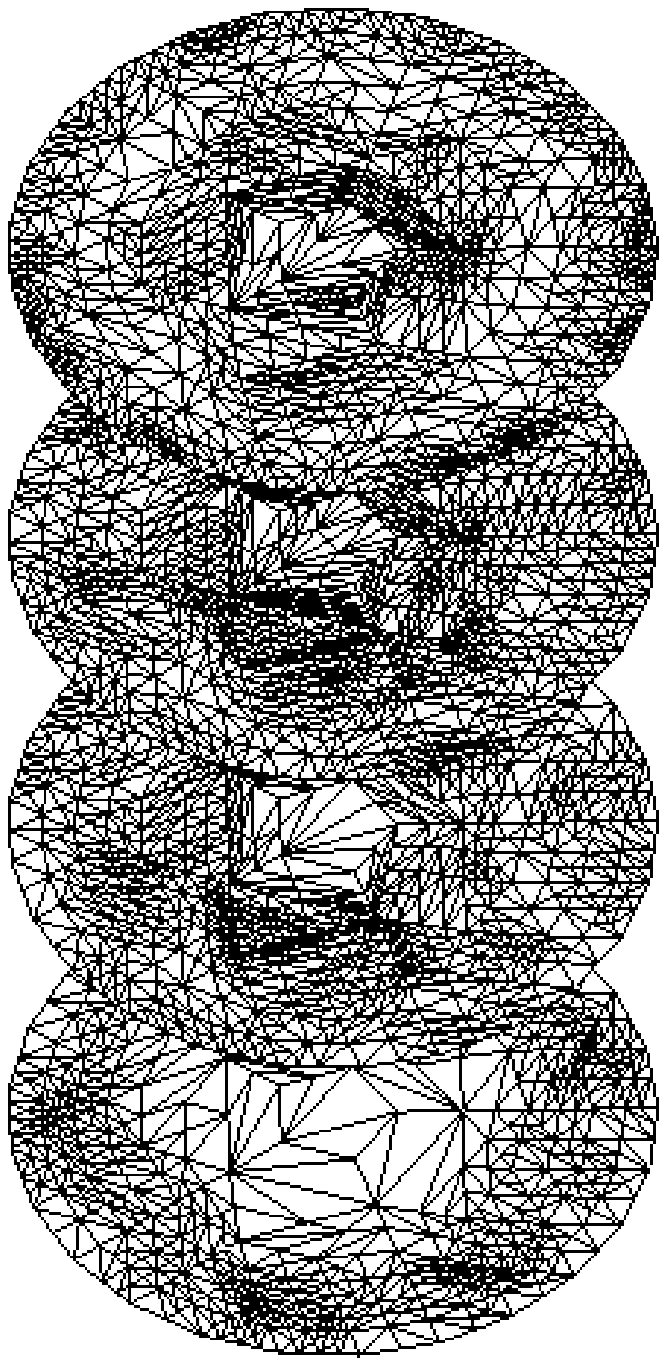}}
\subfloat[]{\includegraphics[width = .33\textwidth]{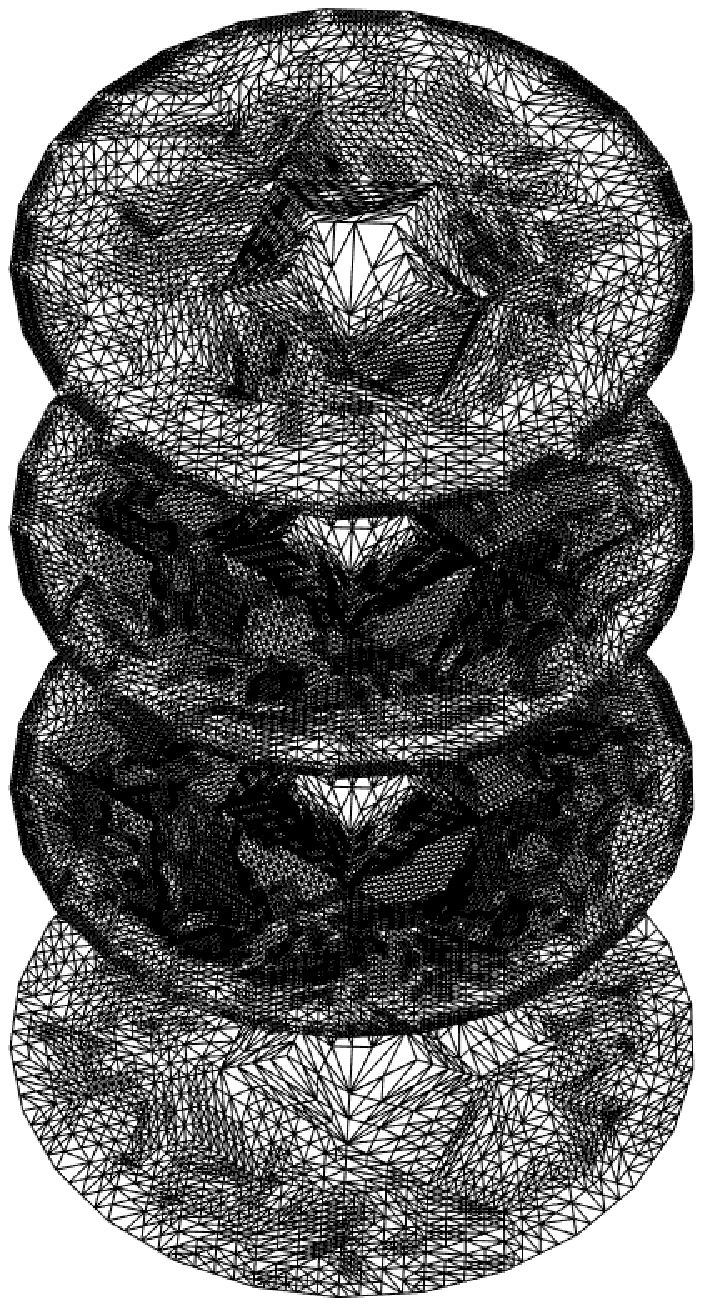}}
\caption{Adaptively refined meshes for Example~\ref{ex:3} after $3,8,15$ refinements at $t = 0,0.4,0.8,1$, from bottom to top, respectively.}
\label{f:MeshesPara1}
\end{figure}

\begin{figure}
\center
\includegraphics[width = 0.7\textwidth]{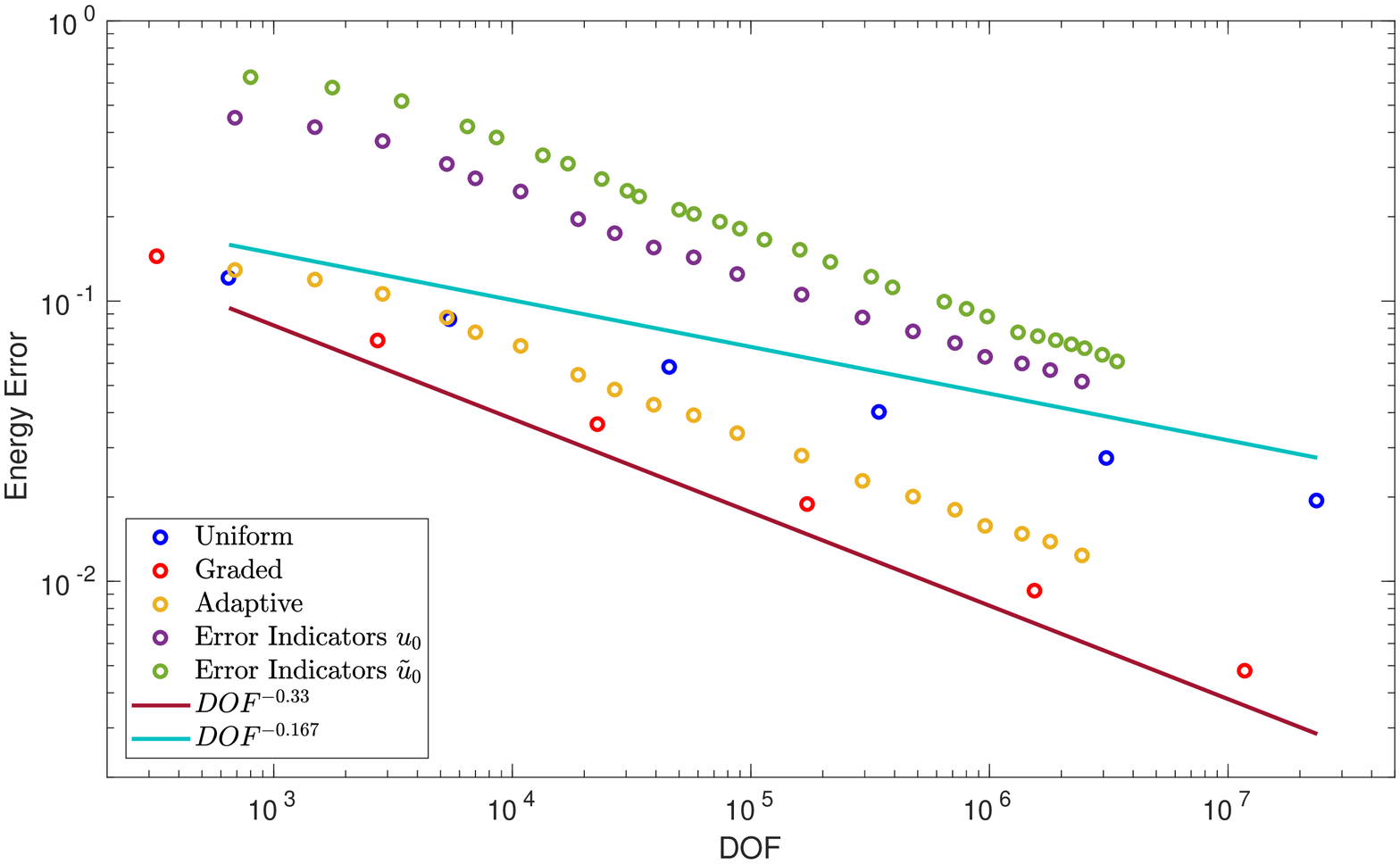}
\caption{Error in the energy norm for the variational inequality in Example~\ref{ex:3}.}
\label{f:para_conv}
\end{figure}

We finally consider a parabolic interior friction problem:
\begin{example}\label{ex:Friction}
We consider the mixed formulation of the interior fractional friction problem \eqref{p:para_mixed} in $\Omega = B_1(0)$, with $s = 0.6$, $f = 0$, $u_0 = (|x|-1)(|x|-0.6)$, $T = 1$, and $\mathcal{F} = 0.1$ in the whole domain $\Omega$. We compare the Galerkin solution to \eqref{p:para_mixed_disc} by piecewise linear finite elements on uniform, graded and adaptively refined meshes with a benchmark solution on an adaptively generated mesh with $29366872$ degrees of freedom. Figure~\ref{f:FrictionSols} shows the numerical solution of the problem at times $t = 0, 0.2, 0.4, 0.8$. Figure~\ref{f:FrictionConv} shows the error in the $H^s(\Omega)$ norm for different meshes in terms of the degrees of freedom. The error indicators capture the slope of the error in the adaptive procedure. The rate of convergence in terms of degrees of freedom is $-0.156$ for uniform meshes, $-0.322$ for $2$-graded meshes, and $-0.323$ for adaptively generated meshes. This corresponds to a convergence rate of $0.499$ (uniform), $1.030$ ($2$-graded) in terms of the mesh size $h$. The free boundary, where $\lambda$ is discontinuous, moves out of the domain as time evolves.
\end{example}

\begin{figure}[!ht]
   \centering
   \subfloat[][$t=0$]{\includegraphics[width=.45\textwidth]{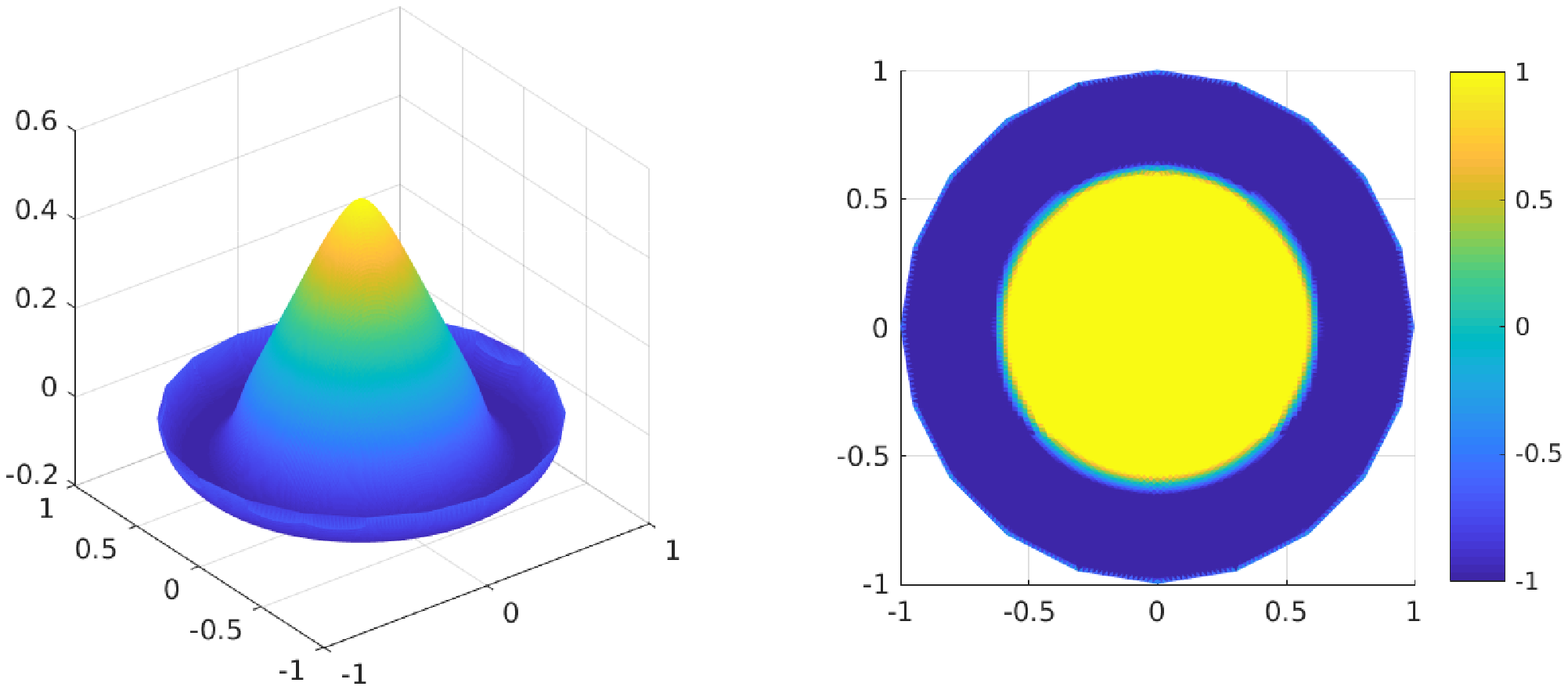}}\quad
   \subfloat[][$t=0.2$]{\includegraphics[width=.45\textwidth]{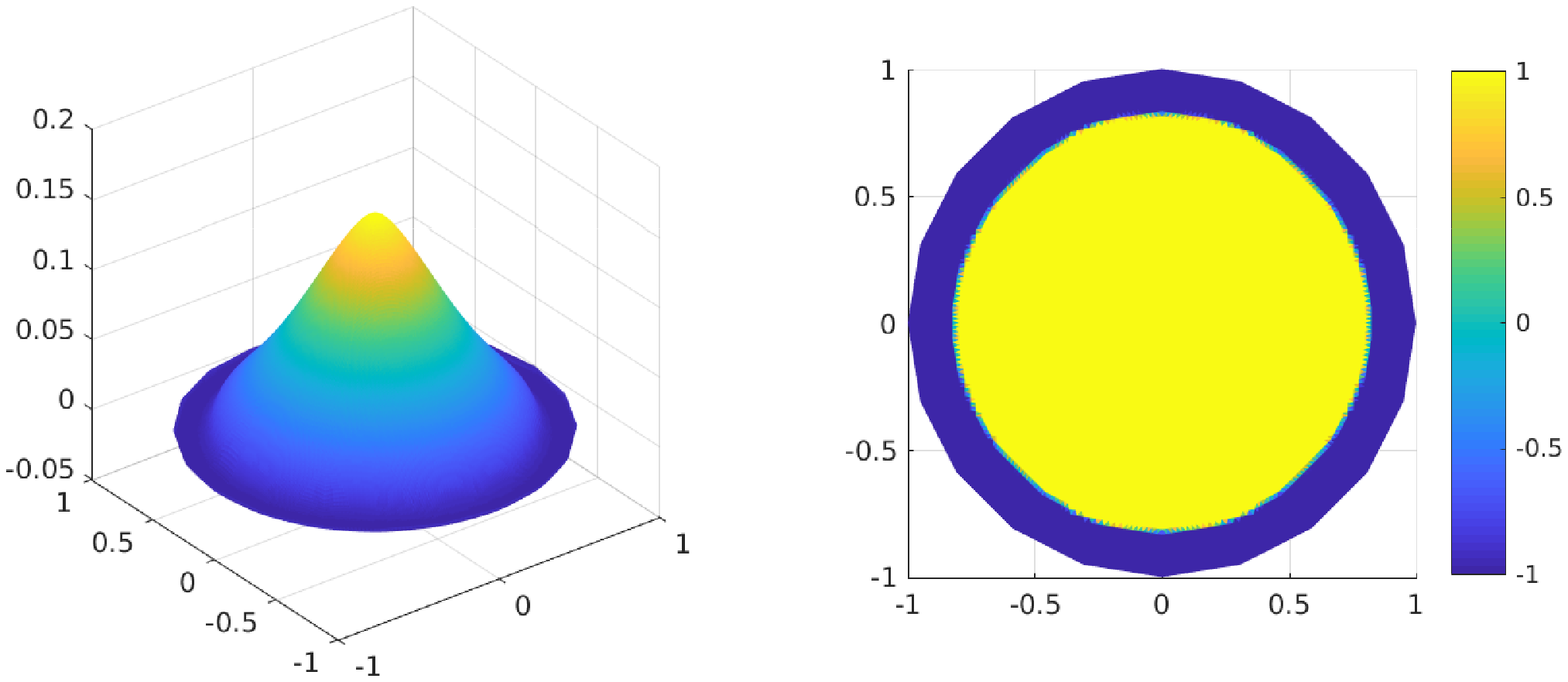}}\\
   \subfloat[][$t=0.4$]{\includegraphics[width=.45\textwidth]{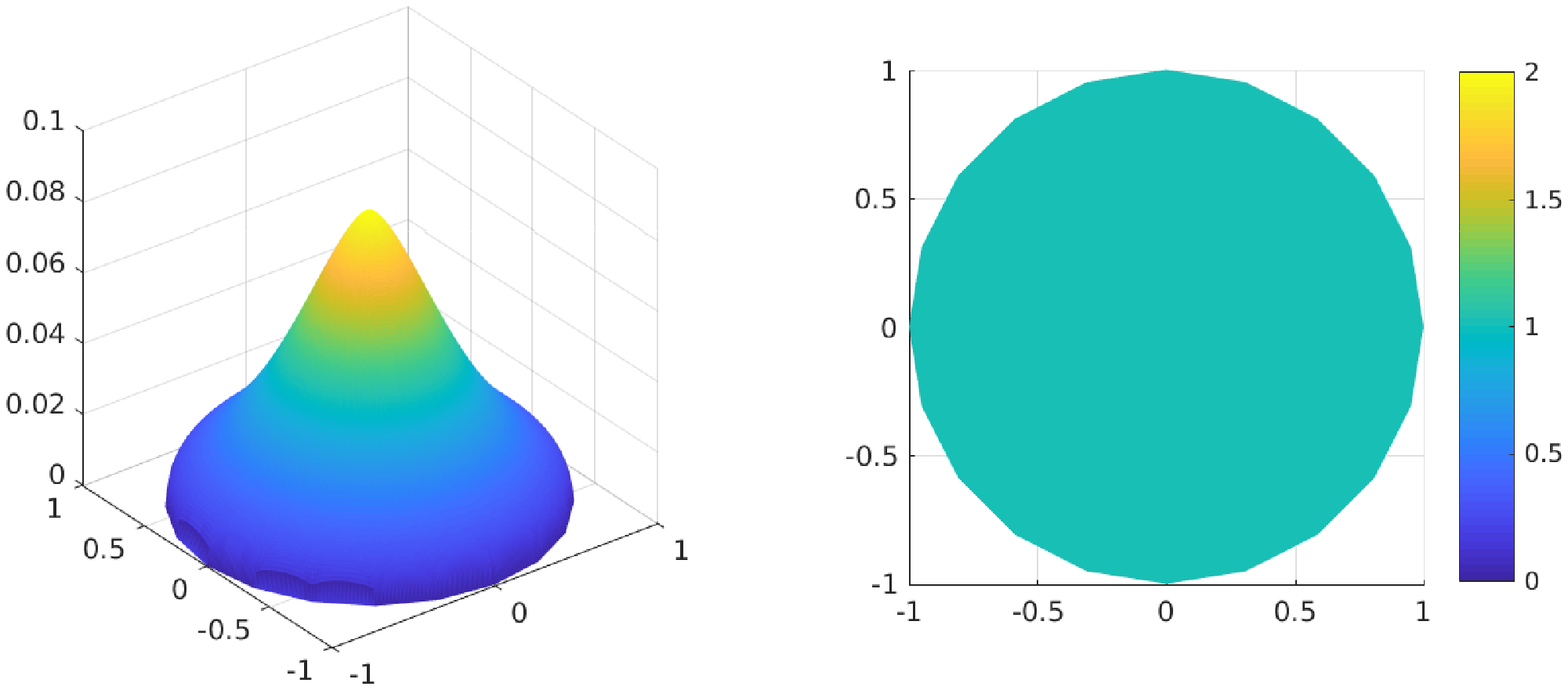}}\quad
   \subfloat[][$t=0.8$]{\includegraphics[width=.45\textwidth]{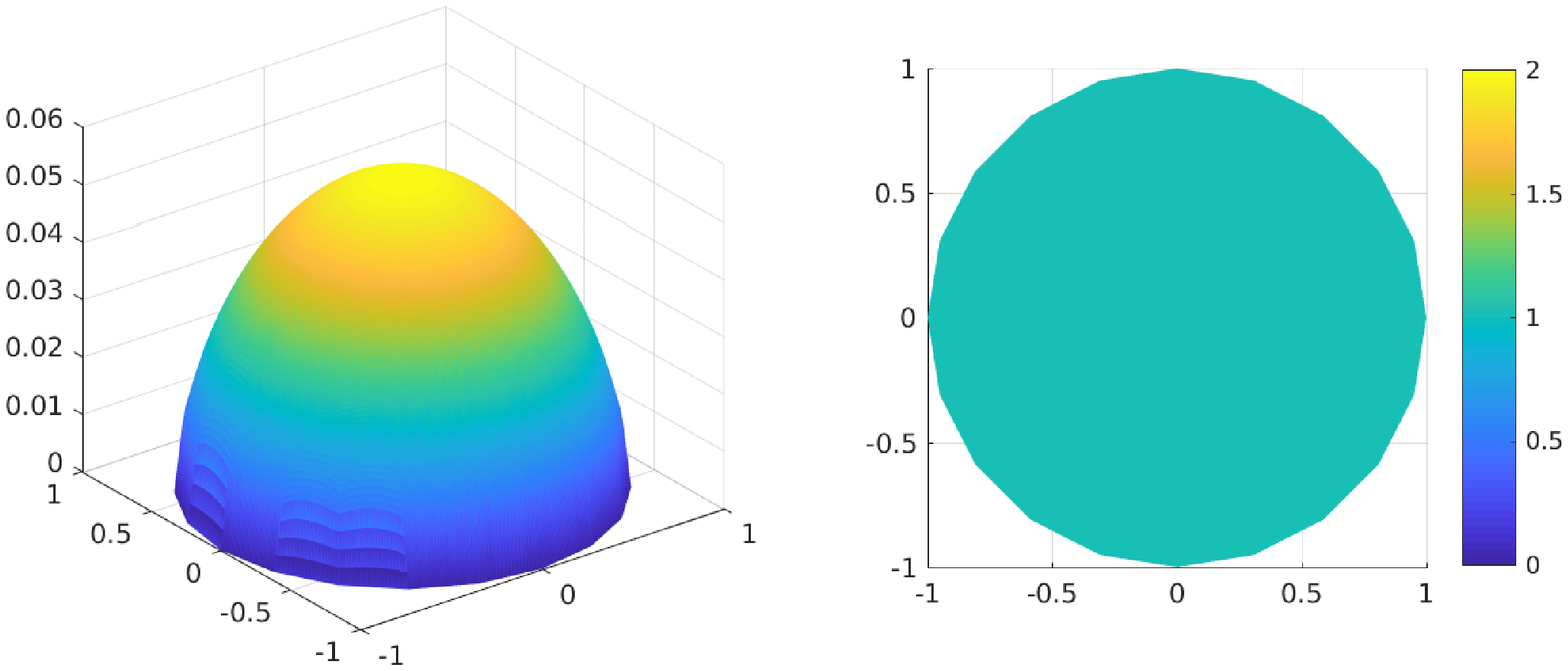}}
   \caption{Solution $(u_{h\tau},\lambda_{H\tau})$ of the interior friction problem from Example~\ref{ex:Friction} at times $t = 0, 0.2, 0.4, 0.8$ on a mesh with $139905$ degrees of freedom.}
   \label{f:FrictionSols}
\end{figure}

\begin{figure}[!ht]
\centering
\includegraphics[width = 0.7\textwidth]{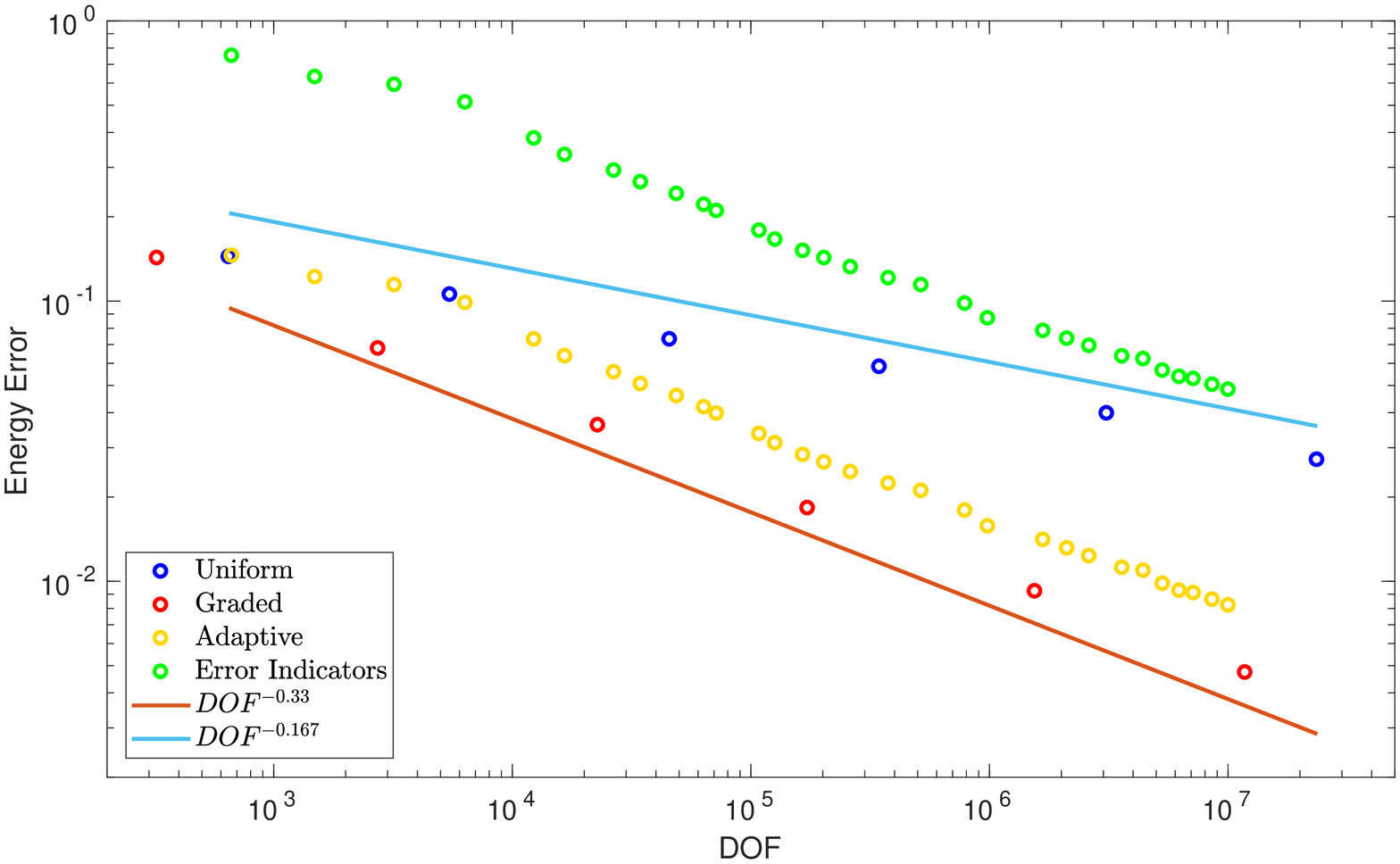}
\caption{Error in the energy norm for the friction problem from Example~\ref{ex:Friction}.}
\label{f:FrictionConv}
\end{figure}

\section{Conclusions}

Motivated by the recent interest in dynamic contact and friction problems for nonlocal differential equations in finance, image processing, mechanics and the sciences, the article provides a systematic error analysis of finite element approximations and space-time adaptive mesh refinements. The analysis of these time-dependent free boundary problems builds on ideas from time-independent boundary element methods \cite{gwinsteph}, dynamic contact and space-time adaptive techniques for parabolic problems.

Key results of this article include a priori and a posteriori error estimates and space-time adaptive numerical experiments for a mixed formulation, which directly computes the contact forces as a Lagrange multiplier. For discontinuous Galerkin methods in time, an inf-sup condition in space is sufficient for the error analysis and guarantees convergence. The analysis is complemented by corresponding results for the formulation as a variational inequality, as well as the time-independent nonlocal elliptic problem, complement the results.

Our numerical experiments illustrate the efficiency of the space-time adaptive procedure for model problems in $2$d. The adaptive method converges at the rate known for algebraically $2$-graded meshes and at twice the rate known for quasiuniform meshes. Unlike graded meshes, the adaptively generated meshes are easily applied to complex geometries and are known to resolve the space-time inhomogeneities inherent in dynamic contact. While strongly graded meshes theoretically recover optimal convergence rates for the time-independent problem, adaptive meshes are less susceptible to the floating point errors encountered for integral operators on strongly graded meshes.

Corresponding results for nonlocal elliptic problems complement the numerical experiments.

While the article analyzes the fractional Laplace operator as a model operator, the analysis extends to variational inequalities for more general nonlocal elliptic operators \cite{Nochetto}.\\

The space-time adaptive methods developed in this paper will be of interest beyond variational inequalities. They will be of use, in particular, for the  nonlinear systems of fractional diffusion equations arising in applications \cite{estrada2018, estrada2019}. 

Much recent interest has been on the stabilization and Nitsche methods for static and dynamic contact, for example \cite{BanzHeiko,burman2018nitsche,Chouly2017}. This will be addressed in future work and, in particular, will allow to circumvent the inf-sup condition on the spatial discretization.

In a different direction as mentioned in Remark~\ref{rem:63} the regularity theory and resulting optimal a priori estimates remain to be developed beyond obstacle problems.

\bibliography{VIbib}
\bibliographystyle{plain}

\end{document}